\def\sqr#1#2{{\vcenter{\vbox{\hrule height.#2pt
              \hbox{\vrule width.#2pt height#1pt \kern#1pt \vrule width.#2pt}
          \hrule height.#2pt}}}}
\def\sqr#1#2{{\vcenter{\vbox{\hrule height.#2pt
              \hbox{\vrule width.#2pt height#1pt \kern#1pt \vrule width.#2pt}
              \hrule height.#2pt}}}}
\def\3n{\negthinspace \negthinspace \negthinspace }
\def\2n{\negthinspace \negthinspace }
\def\1n{\negthinspace }
\def\={\buildrel \triangle \over =}
\def\esssup{\mathop{\rm esssup}}
\def\essinf{\mathop{\rm essinf}}
\def\sup{\mathop{\rm sup}}
\def\inf{\mathop{\rm inf}}
\def\inf{\hbox{\rm inf$\,$}}
\def\esssup{\hbox{\rm esssup}}
\def\essinf{\hbox{\rm essinf}}
\def\sup{\hbox{\rm sup}}
\def\inf{\hbox{\rm inf}}
\def\|{\Big |}
\def\({\Big (}
\def\){\Big )}
\def\[{\Big[}
\def\]{\Big]}
\def\be{\begin{equation}}
\def\bel{\begin{equation}\label}
\def\ee{\end{equation}}
\def\bt{\begin{theorem}}
\def\bcd{\begin{condition}}
\def\ecd{\end{condition}}
\def\et{\end{theorem}}
\def\bc{\begin{corollary}}
\def\ec{\end{corollary}}
\def\bde{\begin{definition}}
\def\ede{\end{definition}}
\def\bl{\begin{lemma}}
\def\el{\end{lemma}}
\def\bp{\begin{proposition}}
\def\ep{\end{proposition}}
\def\bex{\begin{example}}
\def\eex{\end{example}}
\def\br{\begin{remark}}
\def\er{\end{remark}}
\def\ba{\begin{array}}
\def\ea{\end{array}}
\def\ed{\end{document}}
\def\square#1{\vbox{\hrule\hbox{\vrule height#1%
     \kern#1\vrule}\hrule}}
\def\rectangle#1#2{\vbox{\hrule\hbox{\vrule height#1%
     \kern#2\vrule}\hrule}}
\font\tenbb=msbm10 \font\sevenbb=msbm7 \font\fivebb=msbm5
\newtheorem{lemma}{Lemma}[section]
\newtheorem{remark}{Remark}[section]
\newtheorem{example}{Example}[section]
\newtheorem{theorem}{Theorem}[section]
\newtheorem{corollary}{Corollary}[section]
\newtheorem{definition}{Definition}[section]
\newtheorem{proposition}{Proposition}[section]
\newtheorem{condition}{Condition}[section]
\begin{document}

\title{Value Function of Differential Games without Isaacs Conditions. An Approach with Non-Anticipative Mixed Strategies}
\author{ Rainer Buckdahn$^{1,3}$;  Juan Li$^{2,3}${\footnote{Juan Li is the corresponding author.}} ; Marc Quincampoix$^{1}$\\
{\small $^{1}$ Laboratoire de  Math\'{e}matiques,  CNRS-UMR 6205,
Universit\'{e} de
Bretagne Occidentale,}\\
 {\small 6, avenue Victor Le Gorgeu, CS 93837, 29238 Brest
cedex 3, France.}\\
{\small $^{2}$ School of Mathematics and Statistics, Shandong University at Weihai,
Weihai 264209, P.R. China;}\\
{\small $^{3}$ Institute for Advanced Study, Shandong University, Jinan 250100,
P.R. China}\\
{\small {\it E-mails: rainer.buckdahn@univ-brest.fr;
marc.quincampoix@univ-brest.fr; juanli@sdu.edu.cn.}}\\
\date{December 23, 2011 }}\maketitle
\noindent{\bf Abstract}\hskip4mm In the present paper we investigate the problem
of the existence of a value for differential games without Isaacs
condition. For this we introduce a suitable concept of mixed
strategies along a partition of the time interval, which are
associated with classical nonanticipative strategies (with delay).
Imposing on the underlying controls for both players a conditional
independence property, we obtain the existence of the value in mixed
strategies as the limit of the lower as well as of the upper value
functions along a sequence of partitions which mesh tends to zero.
Moreover, we characterize this value in mixed strategies as the
unique viscosity solution of the corresponding
Hamilton-Jacobi-Isaacs equation.

\section{\large{Introduction}}

In the present work we consider 2-person zero-sum differential games
which dynamics is defined through the doubly controlled differential
equation

\be\label{I.1}\frac{d}{ds}X_s =f(s, X_s, u_s, v_s), \; s\in [t,T],\ee
and which pay-off functional is described by

\vskip -0.5cm \be\label{I.2} J:= g(X_T).\ee \vskip -0.1cm

\noindent The initial data $(t,x)$ are in $[0,T]\times R^d$. Given
two compact metric control state spaces $U$ and $V$, the both
players use control processes $u=(u_s)$ and $v=(v_s)$ with values in
$U$ and $V$, respectively. They control the state space process
$X=(X_s)$ which takes its values in $R^d$; its dynamics is driven by
a bounded, continuous function $f=(f(t,x,u,v)):[0,T]\times R^d\times
U\times V\rightarrow R^d$ which is Lipschitz in $x$, uniformly with
respect to $(u,v)$, and the terminal pay-off function $g : R ^d \to
R $ is supposed to be bounded and Lipschitz. Under these assumptions
on $f$ the above equation has a unique solution
$X=(X_s)_{s\in[t,T]}$, denoted by $X^{t,xu,v}$ in order to indicate
the dependence on the initial data $(t,x)$ and the control processes
$u=(u_s)$ and $v=(v_s)$ chosen by player 1 and 2, respectively; and
for the associated pay-off functional we write $J(t,x;u,v)$. While
the objective of the first player consists in maximizing the pay-off
at terminal time $T$, the second player's objective is to minimize
it.

One important issue in the  theory of 2-person zero-sum differential
games is the study of conditions under which the {\it value} of the
game exists, i.e., under which the lower and the upper value
functions of the game coincide. Indeed, with an appropriate concept
of strategies, which will be introduced in Section 2, two value
functions can be introduced, the lower and the upper one. For the
case of a deterministic differential game with dynamics (\ref{I.1})
and pay-off (\ref{I.2}) the lower value function $V:[0,T]\times R^d$
and the upper one $U:[0,T]\times R^d$ are defined as follows:
\begin{equation}V(t,x) = \sup _ \alpha \inf _\beta
J(t,x, \alpha, \beta), \ \ \ U (t,x) = \inf _\beta \sup _ \alpha
J(t,x, \alpha, \beta),\ \ (t,x)\in [0,T]\times R^d ,\end{equation}
where $ \alpha $ runs the set of admissible strategies for the first
player, and $ \beta$ those for the second one. Given such a couple
of admissible strategies $(\alpha,\beta)$, we define the associated
pay-off functional $J(t,x, \alpha , \beta )$ through the unique
couple of controls $(u,v)$ such that $\alpha(v)=u$ and $\beta(u)=v:$
$J(t,x, \alpha , \beta ):=J(t,x,u,v).$

In the literature, since the pioneering works of Isaacs, there have
been many works showing the existence of the value of the
game, this means the equality between the lower and the upper value
functions, under the so-called Isaacs condition saying that, for all
$(t,x,p) \in [0,T] \times R^d \times R ^d$,
\begin{equation}\label{isaacs} \displaystyle{
\sup _{u \in U } \inf _{v \in V } f(t,x,u,v)p = \inf _{v \in V }\sup
_{u \in U } f(t,x,u,v)p.}
\end{equation}
Moreover, under this condition (\ref{isaacs}) the value function
$V(=U)$ solves a partial differential equation, the so-called
Hamilton-Jacobi-Isaacs equation. Such an existence result for the
value was obtained in \cite{ES} in the context of nonanticipative
Varaiya-Roxin-Elliot-Kalton strategies, see \cite{ELKA72}, \cite{
ROX69} and \cite{VL67}, and also in \cite{BCQ06}, \cite{CQS3} and
\cite{PQ2000}, but here for differential games with constraints. As
concerns the context of positional strategies, we refer to
\cite{KRSU} for similar results.

For 2-person zero-sum stochastic differential games the existence of
a value was obtained in \cite{FS} and later revisited and
generalized in \cite{Buckdahn-Li-2008}. We also refer the reader to
\cite{BCQ11} and the references therein for an overview and a more
complete description of these approaches.

\medskip

Our main goal in the present paper is to investigate the problem of
the existence of a value without Isaacs condition. Having other
approaches in the classical theory of differential games in mind, it
is not surprising that we need a proper, suitable notion of mixed
strategies. This proper notion of mixed strategies related with a
suitable randomization  allows to show that the lower and the upper
value functions {\it defined in mixed strategies} coincide.
Moreover, we prove that the value in mixed strategies
$V=(V(t,x)=U(t,x))$ solves in viscosity sense the
Hamilton-Jacobi-Isaacs equation \be\label{I.3}\begin{array}{rlll}
\displaystyle \frac{\partial}{\partial t}V(t,x)+
H(t,V(t,x),\nabla_x V(t,x)) &=& 0, \ &(t,x)\in [0,T]\times R^d,\\
V(T,x)&=&g(x), \ &x\in R^d,
\end{array}\ee
which Hamiltonian is given by
\begin{equation}\label{Hmixte}  \displaystyle{H(t,x,p) : =
\inf_{\nu \in \Delta V} \sup _{ \mu \in \Delta V }\int_{V}\int_{U}
f(t,x,u,v)\mu(du)\nu(dv)p,\ \ (t,x,p) \in [0,T] \times R^d \times R
^d .} \end{equation} Here $ \Delta U$ and $\Delta V$ denote the set
of probability measures on the set $U$ and $V$ (equipped with the
Borel $\sigma $-field), respectively. It is worth pointing out that
the supremum and the infimum in (\ref{Hmixte}) commute due to the
classical minmax theorem. This commutation between the supremum and
the infimum in (\ref{Hmixte}) constitutes also the key in the proof
of the existence of the value in mixed strategies; it can be
regarded as an automatically satisfied Isaacs condition concerning
$\Delta U$ and $\Delta V$ interpreted as control state spaces.
Having this in mind one could immediately define mixed strategies as
nonanticipative strategies with delay for controls taking their
values in $ \Delta U$ and $\Delta V$, respectively. This would lead
to the same value of the game, given by (\ref{I.3}).

But proceeding like that would mean to use relaxed controls.
However, being interested in strong controls, i.e., controls taking
their values in the given control state space $U$ and $V$,
respectively, we define controls and strategies, where the
randomness--necessary for defining the concept of mixed strategies--appears in the choices of the players and not in the values of the
controls. In this sense our work can be considered as an extension
of the famous Kuhn Theorem for repeated games  ( cf \cite{Kuhn} and also \cite{Aumann}) to the context of
deterministic differential games.

To the best of our knowledge, the existence of the value for
differential games without Isaacs condition was only investigated in
the case of positional strategies in \cite{KRSU}, but with different
techniques. Moreover, the nonanticipative strategies used in
\cite{BCQ06, CQS3, PQ2000} do not allow to write the game in a {\it
normal form} (i.e., to play a strategy of one player against a
strategy of the other one) and, consequently, they are not
appropriate for the definition of mixed strategies. Here in our work
we use the concept of nonanticipativity with delays (see
\cite{BCR04, BCQ11} and \cite{CaQu}) and we define a corresponding
notion of mixed strategies.

\medskip

Let us explain the organization of the paper and link it with some
explanation concerning our approach: {\it Section 2} is devoted to
some preliminaries. We introduce there, in particular, the
underlying filtered probability space $(\Omega,{\cal
F},\mathbb{F}=({\cal F}_j)_{j\ge 1}, P)$ which we use for the
randomization of the controls and the strategies. Given an arbitrary
partition $\Pi$ of the interval $[0,T]$, we introduce the admissible
controls for both players along this partition $\Pi$ and the
corresponding nonanticipative strategies with delay (for short
NAD-strategies). The specificity of the choice of our admissible
controls along the partition $\Pi=\{0=t_0<\dots <t_n=T\}$ consists
in the fact that, given the available information ${\cal F}_i$ at
time $t_i$, the admissible control processes for player 1 restricted
to the time interval $[t_i,t_{i+1})$ are independent of those for
player 2. This conditional independence of the control processes on
subintervals defined by the partition $\Pi$ turns out to be the
crucial element in our approach. We show that, along the partition
$\Pi$, for every couple of NAD strategies $\alpha,\, \beta,$ there
exists a unique couple of admissible controls $u,v$ of player 1 and
2, respectively, such that $\alpha(v)=u$ and $\beta(u)=v$. This
allows to give a sense to the pay-off functional
$J(t,x;\alpha,\beta)$. Since the admissible controls are random,
also the pay-off functionals are random, and so are, a priori,
$V^\Pi$ and $U^\Pi$, the lower and the upper value functions along
the partition $\Pi$. In {\it Section 3} we show that $V^\Pi$ and
$U^\Pi$ satisfy along the partition $\Pi$ the dynamic programming
principle. This principle allows to prove with the help of a
backward iteration that $V^\Pi$ and $U^\Pi$ are deterministic. For
this a key result is that $V^\Pi$ and $U^\Pi$ are invariant with
respect to a certain class of bijective transformations
$\tau:\Omega\rightarrow \Omega$ which law is equivalent to the
underlying probability measure $P$, combined with a statement saying
that any random variable with such an invariance property has to
coincide $P$-almost surely with a constant. The proof extends an
idea coming from \cite{Buckdahn-Li-2008}, where it was developed for
a Brownian framework. Furthermore, the fact that $V^\Pi$ and $U^\Pi$
are deterministic, allows to prove that
\begin{equation}V^\Pi(t,x) = \sup _ \alpha \inf _\beta
E[J(t,x, \alpha, \beta)], \ \ \ U^\Pi(t,x) = \inf _\beta \sup _
\alpha E[J(t,x, \alpha, \beta)],\ \ (t,x)\in [0,T]\times R^d
,\end{equation} where  $ \alpha $ runs the set of NAD-strategies
along $\Pi$ for the first player, and $ \beta$ those for the second
player. This combined with standard estimates yields that $V^\Pi$
and $U^\Pi$ are jointly Lipschitz in $(t,x)$, with a Lipschitz
constant which does not depend on the partition $\Pi$. From there we
deduce in {\it Section 4} that the lower and the upper value
functions $V^\Pi$ and $U^\Pi$ converge uniformly on compacts to the
unique solution of the Hamilton-Jacobi-Isaacs equation (\ref{I.3}),
as the maximal distance $|\Pi|$ between two neighbouring points of
the partition $\Pi$ tends to zero. Consequently, the limits of
$V^\Pi$ and $U^\Pi$, $V:=\lim_{|\Pi|\rightarrow 0}V^\Pi$ and
$U:=\lim_{|\Pi|\rightarrow 0}U^\Pi$ exist and coincide: $V=U$ is the
value in mixed strategies of the game.

\section{\large{Preliminaries}}

Let $\lambda_2(dx)=dx$ denote the two-dimensional Borel measure
defined on the quadrate $[0,1]^2\subset R^2$ endowed with the Borel
field ${\cal B}([0,1]^2)$. Denoting by $\mathbb{N}$ the set of all
positive integers we introduce our underlying probability space
$(\Omega,{\cal F},P)$ as product space

\centerline{$\displaystyle (\Omega,{\cal
F},P):=\left(([0,1]^2)^{\mathbb{N}},{\cal B} ([0,1]^2
)^{\otimes\mathbb{N}}, \lambda_2^{\otimes\mathbb{N}}\right),$}

\noindent i.e., $\Omega=\{\omega=(\omega_j)_{j\ge 1}\, |\,
\omega_j\in[0,1]^2,\, j\ge 1\}$ is the space of all $[0,1]^2$-valued
sequences, endowed with the product Borel-field ${\cal F}={\cal
B}([0,1]^2 )^{\otimes\mathbb{N}}$ and the product probability
measure $P= \lambda_2^{ \otimes\mathbb{N}}$. Moreover, letting
$\zeta_j=(\zeta_{j,1},\zeta_{j,2}):\Omega \longrightarrow [0,1]^2$
denote the coordinate mapping on $\Omega:$

\centerline{$\zeta_j(\omega)=(\zeta_{j,1}(\omega),\zeta_{j,2}(\omega))=
(\omega_{j,1},\omega_{j,2}),\ \
\omega=((\omega_{j,1},\omega_{j,2}))_{j\ge 1} \in\Omega,$}

\noindent we have that ${\cal F}$ is the smallest $\sigma$-field
over $\Omega$, with respect to which all coordinate mappings
$\zeta_j,\, j\ge 1,$ are measurable. In what follows we will also
need the $\sigma$-fields ${\cal G}_j:= \zeta_{j,1}^{-1}({\cal
B}([0,1]))=\{\{\zeta_{j,1}\in\Gamma\}\, |\, \Gamma \in{\cal
B}([0,1])\}$ and ${\cal H}_j:=\zeta_{j,2}^{-1}({\cal B}([0,1]))$
generated by $\zeta_{j,1}$ and $\zeta_{j,2}$, respectively, $j\ge
1$, as well as the $\sigma$-field

\centerline{$\displaystyle {\cal F}_j:=\sigma\left\{\cup_{i\le j}({\cal G}_i\cup
{\cal H}_i)\right\}=\sigma\{\zeta_i,\, 1\le i\le j\},$}

\noindent generated by the coordinate mappings
$\zeta_1,\dots,\zeta_j$, for $j\ge 1.$ We remark that, for all $j\ge
1,$ the $\sigma$-fields  ${\cal G}_j, {\cal H}_j$ and ${\cal
F}_{j-1}$ are independent. Moreover, $\mathbb{F}=({\cal F}_j)_{j\ge
1}$ forms a time-discrete filtration, and ${\cal F}=\vee_{j\ge
1}{\cal F}_j\, (:=\sigma\{\cup_{j\ge 1}{\cal F}_j\}\, )$. We also
recall that a random time $\tau:\Omega\rightarrow \{0,1,2,\dots\}$
is an $\mathbb{F}$-stopping time, if $\{\tau=j\}\in{\cal F}_j,\,
j\ge 0.$

Let $U$ and $V$ be compact metric spaces; by $\Delta U$ and $\Delta
V$ we denote the space of probability measures on $(U,{\cal B}(U))$
and on $(V,{\cal B}(V)),$ respectively. The fact that all
probability measure $\mu\in \Delta U$ ($\nu\in \Delta V$, resp.)
coincides with the law of a suitable $U$-valued random variable
($V$-valued random variable, resp.) defined over the space
$([0,T],{\cal B}([0,T]))$ endowed with the one-dimensional Borel
measure (it's an elementary consequence of Skorohod's Representation
Theorem, refer to pp 70 in \cite{B99}), implies, in particular, that

\centerline{$\displaystyle \Delta U=\{P_\xi\, |\, \xi\in
L^0(\Omega,{\cal G}_j,P;U)\}$\footnote{As usual,
$L^0(\Omega,{\cal G}_j,P;U)\}$ denotes the space of all $U$-valued
random variables defined on $ (\Omega,{\cal G}_j,P).$},\ $\Delta
V=\{P_\xi\, |\, \xi\in L^0(\Omega,{\cal H}_j,P;U)\}, \, j\ge 1.$}

\smallskip

In order to introduce the dynamics of the controlled system we want
to investigate, we shall begin with defining the admissible controls
for the both players. We define them along a partition  $\Pi$ of the
time interval $[0,T]$.

\begin{definition}\label{adm.control}(admissible control) A process
$u\in L_{\cal F}^0(0,T;U)$\footnote{$L_{\cal
F}^0(0,T;U)$ denotes the space of all measurable $U$-valued
processus $u=(u_t)_{t\in[0,T]}$ such that $u_t$ is ${\cal
F}$-measurable, for all $t\in[0,T].$} is said to be an admissible
control for Player 1 along a partition
$\Pi=\{0=t_0<t_1<\dots<t_n=T\}$ of the interval $[0,T]$, if, for any
$j\, (1\le j\le n),$ its restriction $u_{|[t_{j-1},t_j)}$ to the
interval $[t_{j-1},t_j)$ is of the form
$u_{|[t_{j-1},t_j)}=\sum_{k\ge 1}I_{\Gamma_{j,k}} u^{j,k}$, where
$(\Gamma_{j,k})_{k\ge 1}\subset{\cal F}_{j-1}$ is a partition of
$\Omega$ and $(u^{j,k})_{k\ge 1}\subset
L_{\mathcal{G}_{j}}^0(t_{j-1}, t_{j}; U)$. If this is the case, we
write $u\in{\cal U}_{0,T}^\Pi$.

Similarly, we say that $v\in L_{\cal F}^0(0,T;V)$ is an admissible
control along the partition $\Pi$ for Player 2, if, for any $j\,
(1\le j\le n),$ its restriction $v_{|[t_{j-1},t_j)}$ to the interval
$[t_{j-1},t_j)$ is of the form $v_{|[t_{j-1},t_j)}=\sum_{k\ge
1}I_{\Gamma_{j,k}} v^{j,k}$, where $(\Gamma_{j,k})_{ k\ge
1}\subset{\cal F}_{j-1}$ is a partition of $\Omega$ and
$(v^{j,k})_{k\ge 1}\subset L_{\mathcal{H}_{j}}^0(t_{j-1}, t_{j};
V)$. If this is the case, we write $v\in{\cal V}_{0,T}^\Pi$.

Finally, for $0\le t\le t_l\in \Pi,$ we put

\centerline{${\cal U}_{t,t_l}^\Pi:=\{(u_{s})_{s\in[t, t_{l}]}|u\in
\mathcal{U}_{0,T}^{\Pi}\}$ and ${\cal
V}_{t,t_{l}}^{\Pi}:=\{(v_{s})_{s\in[t,
t_{l}]}|v\in\mathcal{V}_{0,T}^{\Pi}\}.$}
\end{definition}

Let us describe now the dynamics of our differential game along a
partition $\Pi$ of the interval $[0,T]$. For this we consider a
bounded continuous function $f=(f(t,x,u,v)):[0,T]\times R^d\times
U\times V\longrightarrow R^d$ which is supposed to be Lipschitz in
$x$, uniformly with respect to $(t,u,v)$. Given initial data
$(t,x)\in [0,T]\times R^d$ and two controls $u\in{\cal U}_{t,T}^\Pi$
and $v\in{\cal V}_{t,T}^\Pi$, we define the continuous process
$X^{t,x,u,v}=(X_{s}^{t,x,u,v})_{s\in[t,T]}$ as the unique solution
of the following pathwise differential equation: \be\label{3.1}
X_{s}^{t,x,u,v}=x+\int_{t}^s f(r,X_{r}^{t,x,u,v},u_{r},v_{r}){\rm
d}r,\ \ s\in[t,T],\ \
(u,v)\in\mathcal{U}_{t,T}^{\Pi}\times\mathcal{V}_{t,T}^{\Pi}. \ee

\noindent We remark that standard estimates show

\begin{lemma}\label{estimates-X}
For a suitable real constant $C$ independent of the partition $\Pi$
we have, for all $(u,v)\in{\cal U}_{t,T}^\Pi\times {\cal
V}_{t,T}^\Pi$, for all $(t,x),(t',x')\in [0,T]\times R^d$ and all
$s\in[t\vee t',T]$,
\be\label{1.1}\begin{array}{lll} &{\rm (i)}\ |X_{s}^{t,x,u,v}-x|\leq  CT,\\
&{\rm (ii)}\ |X_{s}^{t,x,u,v}-X_{s}^{t',x',u,v}| \leq C(|t-t'|+|x-x'|).
\end{array}\ee
\end{lemma}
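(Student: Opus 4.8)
The plan is to read (\ref{3.1}) pathwise: for each fixed $\omega\in\Omega$ the random controls $u_r(\omega),v_r(\omega)$ take fixed values in $U,V$, so (\ref{3.1}) becomes an ordinary Carath\'eodory integral equation in the single variable $s$, and both estimates follow from the classical boundedness-and-Gronwall argument carried out $\omega$ by $\omega$. The decisive observation, and precisely what forces $C$ to be independent of $\Pi$, is that the only data entering the computation are the finite sup-norm $M:=\sup|f|$ and the Lipschitz constant $L$ of $f$ in $x$ (uniform in $(t,u,v)$); neither $M$ nor $L$ sees the partition or the particular random controls.

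For (i) I would estimate the integral directly: since $|f|\le M$, one gets $|X^{t,x,u,v}_s-x|\le M(s-t)\le MT$, i.e.\ (i) with $C=M$. For (ii) assume without loss of generality $t\le t'$, so $t\vee t'=t'$ (the opposite case being symmetric). I first compare the two trajectories at the common time $t'$: using $X^{t',x',u,v}_{t'}=x'$ together with $|f|\le M$ gives $|X^{t,x,u,v}_{t'}-x'|\le|x-x'|+M|t-t'|$. Then, for $s\ge t'$, subtracting the two integral equations (both now integrated from $t'$) and invoking the Lipschitz continuity of $f$ in $x$ yields
$$|X^{t,x,u,v}_s-X^{t',x',u,v}_s|\le|X^{t,x,u,v}_{t'}-x'|+L\int_{t'}^{s}|X^{t,x,u,v}_r-X^{t',x',u,v}_r|\,dr.$$
Gronwall's lemma then gives $|X^{t,x,u,v}_s-X^{t',x',u,v}_s|\le(|x-x'|+M|t-t'|)e^{L(s-t')}\le(|x-x'|+M|t-t'|)e^{LT}$, and the choice $C:=(1\vee M)e^{LT}$ produces (ii). Since $M,L,T$ are independent of $\Pi$ and of $(u,v)$, a single constant $C$ serves all partitions and all admissible controls, as claimed.

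The main obstacle is essentially nil in mathematical depth: this is the textbook continuous-dependence estimate for an ODE. The two points that do deserve a word of care are the pathwise reading of (\ref{3.1}), which quarantines the randomness of the admissible controls so that the constants stay deterministic and $\Pi$-free, and the bookkeeping of the initial-time mismatch $t\neq t'$, which is absorbed into the $M|t-t'|$ term before Gronwall's lemma is applied.
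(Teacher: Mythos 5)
Your proof is correct, and it is precisely the ``standard estimates'' the paper invokes without writing out: the paper gives no proof of Lemma \ref{estimates-X}, merely remarking that it follows from standard arguments, and your pathwise bound by $\sup|f|$ for (i) together with the time-shift-plus-Gronwall argument for (ii) is exactly that argument, with the constant visibly depending only on $\sup|f|$, the Lipschitz constant of $f$ in $x$, and $T$, hence independent of $\Pi$ and of $(u,v)$.
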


Let $g:R^d\rightarrow R$ be a bounded Lipschitz function. For a game
over the time interval $[t,T]$ along the partition
$\Pi=\{0=t_0<t_1<\dots<t_n=T\}$, with $0\le i\le n-1$ such that
$t_{i}\le t<t_{i+1}$, we consider the payoff functional
$E[g(X_{T}^{t,x,u,v})|{\cal F}_{i}]$ which Player 1 tries to
maximize through the control $u\in{\cal U}_{t,T}^\Pi$ and Player 2
tries to minimize through his choice of $v\in{\cal V}_{t,T}^\Pi$.
However, in order to guarantee the existence of a value of the game,
we consider a game in which both players use non-anticipative
strategies with delay (NAD-strategies).

\begin{definition}\label{NAD}(NAD-strategies) Let
$\Pi=\{0=t_0<t_1<\dots<t_n=T\}$ be a partition of the interval
$[0,T]$ and $0\le t \le t_l\in \Pi.$\ We say that $\beta:{\cal U}_{t,t_l}^\Pi\longrightarrow {\cal
V}_{t,T}^\Pi$ is an NAD-strategy for Player 2 over the time interval
$[t,t_l]$ along the partition $\Pi$, if for all
$\mathbb{F}$-stopping time $\tau:\Omega\rightarrow
\{0,1,\dots,n-1\}$ and all controls $u,u'\in{\cal U}_{t,t_l}^\Pi$
with $u=u',$ $dsdP$-a.s. on $[[t,t_\tau]]\footnote{The stochastic
interval $[[t,t_\tau]]$ is defined as
$\{(s,\omega)\in[t,T]\times\Omega\, |\, t\le s\le
t_{\tau(\omega)}\}$.}$, it holds $\beta(u)=\beta(u'),$ $dsdP$-a.s. on
$[[t,t_{\tau+1}]].$ The set of all NAD-strategy for Player 2 over
$[t,t_l]$ along $\Pi$ is denoted by $\mathcal{B}_{t,t_{l}}^{\Pi}$.

In an obvious symmetric way we also introduce for Player 1 the set
of all NAD-strategies over the interval $[t,t_l]$ along $\Pi$, and
we denote it by $\mathcal{A}_{t, t_{l}}^{\Pi}$.
\end{definition}

\noindent The following result is crucial; it permits to associate
couples of NAD-strategies with couples of admissible controls.

\begin{lemma}\label{lemma 3.1} For all couple of NAD strategies $(\alpha,
\beta)\in\mathcal{A}_{t,t_{l}}^{\Pi}\times\mathcal{B}_{t,t_{l}}^{\Pi},$
there exists unique couple of admissible controls $(u,
v)\in\mathcal{U}_{t,t_{l}}^{\Pi}\times\mathcal{V}_{t,t_{l}}^{\Pi}\
\mbox{such that}\ \alpha(v)= u,\, \beta(u)= v,$ $dsdP$-a.s. on
$[t,t_l]\times\Omega.$
\end{lemma}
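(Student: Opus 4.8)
The plan is to construct the couple $(u,v)$ recursively along the subintervals of $\Pi$ that cover $[t,t_l]$, exploiting the one-step delay built into the NAD property. Write $t_i\le t<t_{i+1}$, so that the relevant subintervals are $[t,t_{i+1}),[t_{i+1},t_{i+2}),\dots,[t_{l-1},t_l)$. The heart of the argument is the observation that, for a constant $\mathbb{F}$-stopping time $\tau\equiv m\in\{0,\dots,n-1\}$ (which is admissible since $\{\tau=m\}=\Omega\in\mathcal{F}_m$), the defining property of an NAD-strategy says that $\beta(u)$ restricted to $[t_m,t_{m+1})$ depends only on $u$ restricted to $[t,t_m]$, and symmetrically for $\alpha$: if $u=u'$ $dsdP$-a.s.\ on $[[t,t_m]]$ then $\beta(u)=\beta(u')$ $dsdP$-a.s.\ on $[[t,t_{m+1}]]$. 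Thus the value of each strategy on one subinterval is determined by the input on the strictly earlier subintervals, which is exactly what removes any circularity from the coupled equations $\alpha(v)=u$, $\beta(u)=v$.

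For the base case I would take $\tau\equiv i$. Since $t_i\le t$, the stochastic interval $[[t,t_i]]$ is either empty (if $t_i<t$) or the $ds$-null set $\{t_i\}\times\Omega$ (if $t=t_i$); in both cases the hypothesis ``$u=u'$ on $[[t,t_i]]$'' holds vacuously for every pair $u,u'$. Hence $\beta(u)|_{[t,t_{i+1})}$ and, symmetrically, $\alpha(v)|_{[t,t_{i+1})}$ do not depend on their arguments at all, and I define $u,v$ on $[t,t_{i+1})$ to be these fixed restrictions. For the inductive step, assume $u,v$ have been defined on $[t,t_{i+k})$ in such a way that each is the restriction of some admissible control. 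I would extend these partial controls arbitrarily to full admissible controls $\tilde u\in\mathcal{U}_{t,t_l}^{\Pi}$, $\tilde v\in\mathcal{V}_{t,t_l}^{\Pi}$ (for instance by appending constant, hence admissible, pieces), and set $u|_{[t_{i+k},t_{i+k+1})}:=\alpha(\tilde v)|_{[t_{i+k},t_{i+k+1})}$ and $v|_{[t_{i+k},t_{i+k+1})}:=\beta(\tilde u)|_{[t_{i+k},t_{i+k+1})}$. By the delay property applied with $\tau\equiv i+k$ (legitimate because $i+k\le l-1\le n-1$), these restrictions are independent of the chosen extensions, so the construction is unambiguous; moreover, each new piece depends only on the already-determined data on $[t,t_{i+k})$, so $u$ and $v$ can be defined simultaneously with no circularity. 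Since $\alpha(\tilde v)$ and $\beta(\tilde u)$ are admissible controls, and admissibility is imposed separately on each subinterval, the assembled $u,v$ lie in $\mathcal{U}_{t,t_l}^{\Pi}$, $\mathcal{V}_{t,t_l}^{\Pi}$ and satisfy $\alpha(v)=u$, $\beta(u)=v$ $dsdP$-a.s.\ on $[t,t_l]\times\Omega$.

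Uniqueness follows from the same induction. If $(u,v)$ and $(u',v')$ are two solutions, then on $[t,t_{i+1})$ the vacuous base-case argument forces $u=\alpha(v)=\alpha(v')=u'$ and $v=v'$; and if $u=u'$, $v=v'$ on $[t,t_{i+k})$, then the delay property gives $\alpha(v)|_{[t_{i+k},t_{i+k+1})}=\alpha(v')|_{[t_{i+k},t_{i+k+1})}$ and likewise for $\beta$, so the two solutions agree on the next subinterval as well, and hence on all of $[t,t_l]$.

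I expect the main obstacle to be bookkeeping around well-definedness rather than any deep difficulty. One must check carefully that the output of a strategy on a given subinterval genuinely depends only on the input on the strictly earlier subintervals — this is where the choice of constant stopping times $\tau\equiv i+k$, together with the fact that the one extra step $[t_{i+k},t_{i+k+1})$ is \emph{included} in the conclusion, is essential — and that extending a partial control to a full admissible one does not affect the relevant restriction. The second delicate point is verifying that gluing the admissible pieces yields a genuinely admissible control; this relies on the definitions of $\mathcal{U}_{t,t_l}^{\Pi}$ and $\mathcal{V}_{t,t_l}^{\Pi}$ being \emph{local}, i.e.\ imposed interval by interval, so that no compatibility condition across subintervals can fail.
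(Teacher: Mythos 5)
Your proposal is correct and follows essentially the same route as the paper's own proof: both exploit the one-step delay of NAD strategies to build $(u,v)$ by induction over the subintervals $[t_{i+k},t_{i+k+1})$, starting from the observation that $[[t,t_i]]$ is $dsdP$-null so the first pieces are input-independent, and uniqueness comes from the same induction. The only cosmetic difference is that the paper iterates on whole controls $u^j:=\alpha(v^{j-1})$, $v^j:=\beta(u^{j-1})$ and shows they stabilize, whereas you assemble the restrictions piece by piece; the content is identical.
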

Although such a result is well-known for deterministic and
stochastic differential games (see, for instance, \cite{BCR04} and
\cite{BCQ11}), we want to sketch here the proof for the convenience
of the reader, because the context we study differs a bit from that of
\cite{BCR04} and \cite{BCQ11}.
\begin{proof}
Let $\Pi=\{0=t_0<t_1<\dots<t_n=T\}$ be a partition of the interval
$[0,T]$, $0\le t_i\le t<t_{i+1} \le t_l\in \Pi,$ and $(\alpha,\beta)
\in\mathcal{A}_{t, t_{l}}^{\Pi}\times \mathcal{B}_{t, t_{l}}^{\Pi}.$
Then, due to the definition of NAD strategies, $\alpha(v),\beta(u)$
restricted to the interval $[t,t_{i+1}]$ depend only on the
restrictions of the controls $v\in \mathcal{V}_{t, t_{l}}^{\Pi}$ and
$u\in \mathcal{U}_{t, t_{l}}^{\Pi}$ to the interval $[t,t_i]$. But
since this interval is empty or at most a singleton (and, hence, of
Lebesgue measure zero), $\alpha(v),\beta(u)$ restricted to the
interval $[t,t_{i+1}]$ don't depend on $v$ and $u$. Consequently,
given arbitrary $u^0\in \mathcal{U}_{t, t_{l}}^{\Pi},v^0\in
\mathcal{V}_{t, t_{l}}^{\Pi},$ we put
$u^1:=\alpha(v^0),v^1:=\beta(u^0)$, and we have

\centerline{$\alpha(v^1)=u^1,\ \  \beta(u^1)=v^1, \mbox{ on }
[t,t_{i+1}].$}

Supposing that we have constructed $(u^{j-1},v^{j-1})\in
\mathcal{U}_{t, t_{l}}^{\Pi}\times\mathcal{V}_{t, t_{l}}^{\Pi}$ such
that $\alpha(v^{j-1})=u^{j-1}$ and $\beta(u^{j-1})=v^{j-1},$
$dsdP$-a.s. on $[t,t_{i+j-1}]$, we put $u^{j}:=\beta(v^{j-1}),\,
v^{j}:=\alpha(u^{j-1}).$ Then, obviously, $(u^{j},v^{j})\in
\mathcal{U}_{t, t_{l}}^{\Pi}\times\mathcal{V}_{t, t_{l}}^{\Pi}$,
$(u^{j},v^{j})=(u^{j-1},v^{j-1})$, $dsdP$-a.s. on $[t,t_{i+j-1}],$
and because of the NAD property of the strategies $\alpha,\beta$ we
have $u^{j}=\beta(v^{j}),\, v^{j}=\alpha(u^{j})$, $dsdP$-a.s. on
$[t,t_{i+j}].$ By iterating the argument up to $j=l-i$ we obtain the
assertion of the lemma.
\end{proof}
\begin{remark}
Given a couple of NAD strategies $(\alpha,\beta)\in\mathcal{A}_{t,
t_{l}}^{\Pi}\times \mathcal{B}_{t, t_{l}}^{\Pi}$ the above Lemma
\ref{lemma 3.1} allows to define the dynamics
$X^{t,x,\alpha,\beta}=(X_{s}^{t,x,\alpha,\beta})_{s\in[t,t_l]}$
along the partition $\Pi$ over the interval $[t,t_l]$ ($t_l\in\Pi$)
through that of the couple of admissible controls $(u,v)\in
\mathcal{U}_{t, t_{l}}^{\Pi}\times \mathcal{V}_{t, t_{l}}^{\Pi}$
associated with by the relation $\alpha(v)= u,\, \beta(u)= v,$
$dsdP$-a.s. on $[t,t_l]\times\Omega.$
\end{remark}

\medskip

After the above preliminary discussion we can now introduce the
value functions of the game along a partition
$\Pi=\{0=t_0<\dots<t_n=T\}$ of the interval $[0,T].$ For the initial
data $(t,x)\in[0,T]\times R^d$ we define the lower value function
$V$ and the upper value function $U$\ along a partition
$\Pi=\{0=t_0<\dots<t_n=T\}$\ as follows:
\be\label{3.2} \begin{array}{lll}V^{\Pi}(t,x)&:=& \mbox{esssup}_{
\alpha\in\mathcal{A}_{t,t_{l}}^{\Pi}} \mbox{essinf}_{\beta\in
\mathcal{B}_{t,t_{l}}^\Pi}E[g(X_{T}^{t,x,\alpha,\beta})|\mathcal{F}_{i}],\\
U^{\Pi}(t,x)&:=& \mbox{essinf}_{\beta\in \mathcal{B}_{t,t_{l}}^\Pi}
\mbox{esssup}_{\alpha\in\mathcal{A}_{t,t_{l}}^{\Pi}}
E[g(X_{T}^{t,x,\alpha,\beta})|\mathcal{F}_{i}],\\
& & \hskip 3cm \ \mbox{ for }\ t_{i} \leq t<t_{i+1}<T\, \ (0\le i\le
n-1).
\end{array}\ee

\medskip

We emphasize that, since the lower and the upper value functions are
defined as a combination of essential supremum and essential infimum
over an indexed family of uniformly bounded,
$\mathcal{F}_{i}$-measurable random variables, also they themselves
are a priori bounded, $\mathcal{F}_{i}$-measurable random variables
(Recall the definition of the essential supremum and infimum, e.g.,
in Dunford and Schwartz \cite{DS}, Dellacherie \cite{D} or in the
appendix of Karatzas and Shreve \cite{KS2}, where a detailed
discussion is made.). However, in the next section we will show that
the lower and the upper value functions are deterministic (The
interested reader is also referred to \cite{Buckdahn-Li-2008}, where
a comparable approach, but in a completely different framework is
done for stochastic differential games with Isaacs condition.)

We also remark that we have the following statement as an immediate
consequence of Lemma \ref{estimates-X} and the fact the the function
$g$ is bounded and Lipschitz:

\begin{lemma}\label{lemma 3.2} Under our standard assumptions on the
coefficients $f$ and $g$ we have that there is some constant $L$
such that, for all $(t,x),(t',x')\in[0,T]\times R^d$ and for all
partition $\Pi$,
\be\label{3.2bis} \begin{array}{lll}& {\rm (i)} |V^{\Pi}(t,x)|\le  L,\\
& {\rm (ii)} |V^{\Pi}(t,x)-V^{\Pi}(t,x')| \leq  L|x-x'|, \, \ P\mbox{-a.s.}
\end{array}\ee
\end{lemma}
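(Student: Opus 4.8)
The plan is to deduce both bounds directly from the pathwise trajectory estimates of Lemma \ref{estimates-X} together with the boundedness and Lipschitz continuity of $g$, carrying the relevant inequalities through the strategy-indexed families of random variables that define $V^\Pi$. Throughout I fix the index $i$ with $t_i\le t<t_{i+1}$, so that both $V^\Pi(t,x)$ and $V^\Pi(t,x')$ are built from the same conditioning $\sigma$-field $\mathcal{F}_i$ and the same strategy sets; I also let $M\ge 0$ be a bound with $|g(y)|\le M$ for all $y$, and $L_g$ the Lipschitz constant of $g$.

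Assertion (i) is immediate: for every admissible couple $(u,v)$ one has $|g(X_T^{t,x,u,v})|\le M$ pointwise, whence $|E[g(X_T^{t,x,\alpha,\beta})\,|\,\mathcal{F}_i]|\le M$ $P$-a.s. for every couple of NAD-strategies $(\alpha,\beta)$. Since the essential supremum and essential infimum of a uniformly bounded family inherit the same bound, one concludes $|V^\Pi(t,x)|\le M$.

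For (ii) the key observation is that, by Lemma \ref{lemma 3.1}, the couple of realized controls $(u,v)$ associated with a \emph{fixed} couple of strategies $(\alpha,\beta)$ depends only on $(\alpha,\beta)$ and on the time interval, not on the spatial initial datum. Hence, for a fixed pair $(\alpha,\beta)$, the trajectories $X^{t,x,\alpha,\beta}$ and $X^{t,x',\alpha,\beta}$ are driven by the very same controls, so Lemma \ref{estimates-X}(ii) (applied with $t'=t$) gives $|X_T^{t,x,\alpha,\beta}-X_T^{t,x',\alpha,\beta}|\le C|x-x'|$ pathwise. The Lipschitz property of $g$ then yields, for every $(\alpha,\beta)$,
\be
\big| E[g(X_T^{t,x,\alpha,\beta})\,|\,\mathcal{F}_i] - E[g(X_T^{t,x',\alpha,\beta})\,|\,\mathcal{F}_i] \big| \le L_g\, C\, |x-x'|, \quad P\text{-a.s.}
\ee
To pass this through the extrema I would first fix $\alpha$ and set $W(t,x;\alpha):=\mbox{essinf}_{\beta}E[g(X_T^{t,x,\alpha,\beta})\,|\,\mathcal{F}_i]$. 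Because $W(t,x;\alpha)$ is, in the $P$-a.s. order, a lower bound of $E[g(X_T^{t,x,\alpha,\beta})\,|\,\mathcal{F}_i]$ for each $\beta$, the displayed estimate gives $W(t,x;\alpha)\le E[g(X_T^{t,x',\alpha,\beta})\,|\,\mathcal{F}_i]+L_gC|x-x'|$ for every $\beta$; taking the essinf over $\beta$ (the greatest a.s. lower bound) and using that the deterministic quantity $L_gC|x-x'|$ commutes with it yields $W(t,x;\alpha)\le W(t,x';\alpha)+L_gC|x-x'|$. Exchanging $x$ and $x'$ gives $|W(t,x;\alpha)-W(t,x';\alpha)|\le L_gC|x-x'|$ $P$-a.s., and an identical argument with the esssup over $\alpha$ upgrades this to $|V^\Pi(t,x)-V^\Pi(t,x')|\le L_gC|x-x'|$. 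Setting $L:=\max\{M,L_gC\}$ completes the proof, and since $C$ is independent of $\Pi$, so is $L$.

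The genuinely analytic content is already packaged in Lemma \ref{estimates-X}, so the only care required lies in the lattice manipulations of the essential extrema. The main (mild) obstacle is therefore bookkeeping rather than estimation: one must use that $\mbox{essinf}$ and $\mbox{esssup}$ are respectively the greatest lower and least upper bounds in the almost-sure partial order, and that additive deterministic constants commute with both, while ensuring that each inequality between strategy-indexed random variables is correctly quantified ``for all $\beta$, $P$-a.s.'' before the extremum is taken, so that the final bound holds $P$-a.s. outside a single exceptional null set.
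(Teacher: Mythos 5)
Your proof is correct and follows exactly the route the paper intends: the paper states Lemma \ref{lemma 3.2} as an immediate consequence of Lemma \ref{estimates-X} and the boundedness and Lipschitz continuity of $g$, and your argument---noting that for a fixed couple $(\alpha,\beta)$ the realized controls from Lemma \ref{lemma 3.1} do not depend on the spatial initial datum, so the pathwise estimate passes through the conditional expectation and the essential extrema---is precisely the omitted verification.
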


\section{Lower and upper value functions along a partition}

The objective of this section is to study the properties of the
above introduced lower and upper value functions along a partition
$\Pi=\{0=t_0<t_1<\dots<t_n=T\}$ of the interval $[0,T]$. More
precisely, we first establish a dynamic programming principle (DPP) which
on its part will allow to prove that the both value functions are
deterministic.

\bt\label{th3.2} (Dynamic Programming Principle)  Let
$\Pi=\{0=t_0<\dots<t_n=T\}$ be an arbitrary partition of the
interval $[0,T]$ and $(t,x)\in[0,T]\times R^d.$ Then, for $i,l$ such
that  $t_i\le t<t_{i+1}\le t_l,$

\be\label{3.10}\begin{array}{lll} V^{\Pi}(t,x) &=&\esssup_{\alpha\in
\mathcal{A}^\Pi_{t,t_l}} \essinf_{\beta\in
\mathcal{B}^\Pi_{t,t_l}}E[V^{\Pi}(t_l,X_{t_l}^{t,x,\alpha,\beta})\mid
\mathcal{F}_i],\\
U^{\Pi}(t,x) &=& \essinf_{\beta\in
\mathcal{B}^\Pi_{t,t_l}}\esssup_{\alpha\in \mathcal{A}^\Pi_{t,t_l}}
E[U^{\Pi}(t_l,X_{t_l}^{t,x,\alpha,\beta})\mid \mathcal{F}_i],\, \
P\mbox{-a.s.}
\end{array}\ee
\et

For the proof which will be split in two lemmas, we will restrict to
the lower value function along a partition; the proof for the upper value function along a partition uses a symmetric argument. Keeping the notations introduced in the above theorem we put
\be\label{3.11}\widetilde{V}^{\Pi}_l(t,x)=\esssup_{\alpha\in\mathcal{A}^\Pi_{t,t_l}}
\essinf_{\beta\in\mathcal{B}^\Pi_{t,t_l}} E[V^{\Pi}(t_l,
X_{t_l}^{t,x,\alpha,\beta})\mid \mathcal{F}_i].\ee \noindent We
remark that $\widetilde{V}^{\Pi}_l(t,x)$ is an ${\cal
F}_i$-measurable random variable.
\begin{lemma}\label{lemma 1a} Under our standard assumptions we have
$\widetilde{V}^{\Pi}_l(t,x)\le V^\Pi(t,x),$ $P$-a.s.
\end{lemma}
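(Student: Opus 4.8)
The plan is to establish the inequality by a pasting argument on the strategies of Player~1 (the maximizer): I would take a near-optimal strategy on the short interval $[t,t_l]$ for the truncated criterion appearing in (\ref{3.11}) and prolong it on $[t_l,T]$ by near-optimal strategies for the subgames started from the intermediate state $X_{t_l}$, thus producing a single NAD-strategy on $[t,T]$ that performs, against every $\beta$, at least as well as the right-hand side of (\ref{3.11}).

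Fix $\alpha\in\mathcal{A}^\Pi_{t,t_l}$ and $\varepsilon>0$. For each $y\in R^d$ the definition of $V^\Pi(t_l,y)$ as an essential supremum furnishes a strategy $\alpha^{y}\in\mathcal{A}^\Pi_{t_l,T}$ with $\essinf_{\beta'}E[g(X_T^{t_l,y,\alpha^{y},\beta'})\mid\mathcal{F}_l]\ge V^\Pi(t_l,y)-\varepsilon$. Covering $R^d$ by a countable partition $(O_j)_j$ of Borel sets of diameter at most $\varepsilon$ with points $y_j\in O_j$, the spatial Lipschitz estimate of Lemma \ref{lemma 3.2}(ii) shows that $\alpha^{y_j}$ is, uniformly on $O_j$, optimal up to an error of order $\varepsilon$. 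I then define $\bar\alpha$ on $[t,T]$ by $\bar\alpha(v)=\alpha(v)$ on $[t,t_l]$ and $\bar\alpha(v)=\sum_j I_{\{X_{t_l}\in O_j\}}\,\alpha^{y_j}(v)$ on $[t_l,T]$, where $X_{t_l}$ is the state at $t_l$ of the trajectory driven on $[t,t_l]$ by the control pair produced by $\alpha$ and $v$ (by Lemma \ref{lemma 3.1} it depends only on $v_{|[t,t_l]}$).

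The key point is that $\bar\alpha$ is a legitimate element of $\mathcal{A}^\Pi_{t,T}$. Since $X_{t_l}$ is $\mathcal{F}_l$-measurable, the events $\{X_{t_l}\in O_j\}$ lie in $\mathcal{F}_l=\mathcal{F}_{(l+1)-1}$, which is exactly the $\sigma$-field allowed for the index partition of the admissible controls on the block $[t_l,t_{l+1})$ in Definition \ref{adm.control}; hence the selection respects both the block structure and the conditional independence of the two players' controls across subintervals, while the delay property of $\bar\alpha$ is inherited from $\alpha$ and the $\alpha^{y_j}$. Then, for arbitrary $\beta\in\mathcal{B}^\Pi_{t,T}$, Lemma \ref{lemma 3.1} produces the associated controls, whose restriction to $[t_l,T]$ feeds the subgame issued from $X_{t_l}$; conditioning on $\mathcal{F}_l$ and using the tower property, on each $\{X_{t_l}\in O_j\}$ I obtain $E[g(X_T^{t,x,\bar\alpha,\beta})\mid\mathcal{F}_l]\ge V^\Pi(t_l,X_{t_l})-C\varepsilon$, so that
$$V^\Pi(t,x)\ge\essinf_{\beta}E[g(X_T^{t,x,\bar\alpha,\beta})\mid\mathcal{F}_i]\ge\essinf_{\beta}E[V^\Pi(t_l,X_{t_l}^{t,x,\alpha,\beta})\mid\mathcal{F}_i]-C\varepsilon.$$
Taking the essential supremum over $\alpha\in\mathcal{A}^\Pi_{t,t_l}$ and letting $\varepsilon\downarrow0$ yields $V^\Pi(t,x)\ge\widetilde{V}^\Pi_l(t,x)$, $P$-a.s.

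I expect the main obstacle to be precisely this measurable pasting: verifying that $\bar\alpha$, whose continuation is selected through the random index determined by $X_{t_l}$, is genuinely non-anticipative with delay and compatible with the $\mathcal{F}_{l}$-block / $\mathcal{G}_{l+1}$-measurable structure of admissible controls, and that the trajectory splits correctly at $t_l$ into the two subgames. The Lipschitz regularity of Lemma \ref{lemma 3.2} is what reduces the uncountable selection over $y\in R^d$ to a countable one while keeping the total error of order $\varepsilon$.
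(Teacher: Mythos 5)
Your proposal is correct and follows essentially the same route as the paper's proof: an $\varepsilon$-optimal continuation selected through a countable Borel partition $(O_j)$ of $R^d$, pasted along the $\mathcal{F}_l$-measurable events $\{X_{t_l}\in O_j\}$, concatenated with the given strategy on $[t,t_l]$, with the restriction of $\beta$ feeding the subgame and the Lipschitz continuity of $V^\Pi(t_l,\cdot)$ and $g$ controlling the error. The only (harmless) difference is organizational: you keep $\alpha$ on $[t,t_l]$ arbitrary and take the essential supremum at the end, whereas the paper first selects an $\varepsilon$-optimal $\alpha_1^\varepsilon$ via the same countable-pasting device it later reuses for the continuation strategies.
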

\begin{proof}
\underline{Step 1}. Let us fix arbitrarily $\varepsilon>0$. Then,
there exists $\alpha^\varepsilon_1\in\mathcal{A}^\Pi_{t,t_l}$\  such
that \be\label{3.12}\widetilde{V}^{\Pi}_l(t,x)\leq
\essinf_{\beta_1\in\mathcal{B}^\Pi_{t,t_l}}
E[V^{\Pi}(t_l,X_{t_l}^{t,x,\alpha^\varepsilon_1,\beta_1})\mid
\mathcal{F}_i]+\varepsilon,\ \mbox{P-a.s.}\ee \noindent Indeed,
setting $I_1(\alpha)=\essinf_{\beta_1\in
\mathcal{B}^\Pi_{t,t_l}}E[V^{\Pi}(t_l,X_{t_l}^{t,x,\alpha,\beta_1})\mid
\mathcal{F}_i]$, we know from the properties of the essential
supremum over a family of random variables that there is a countable
sequence $(\alpha_k)_{k\geq 1}\subset \mathcal{A}^\Pi_{t,t_l}$\ such
that \be\label{3.13}\widetilde{V}^{\Pi}_l(t,x)=\esssup_{\alpha_1\in
\mathcal{A}^\Pi_{t,t_l}} I_1(\alpha_1)=\sup_{k\geq  1}I_1(\alpha_k),\, P\mbox{-a.s.}\ee \noindent Then, obviously,
$\triangle_k:=\{\widetilde{V}^{\Pi}_l(t,x)\leq
I_1(\alpha_k)+\varepsilon\}\in \mathcal{F}_i,\ \ k\geq 1,$ and
putting  $\Gamma_k:={\Delta_k}\setminus(\bigcup_{i<k}\Delta_i),\
k\geq 1,$ we define an $(\Omega,{\cal F}_i)$-partition, i.e., a
partition of $\Omega,$ composed of elements of the $\sigma$-field
$\mathcal{F}_i$. Let us now introduce the mapping
$\alpha_1^\varepsilon:=\Sigma_{k\geq
1}I_{\Gamma_k}\alpha_k(\cdot):\, {\cal V}^\Pi_{t,t_l}\rightarrow
{\cal U}^\Pi_{t,t_l}.$ It can be easily checked that such
defined mapping belongs to $\mathcal{A}^\Pi_{t,t_l},$ and standard
arguments (see, e.g., \cite{Buckdahn-Li-2008}) allow to show that
$$\displaystyle E[V^{\Pi}(t_l, X_{t_l}^{t, x,
\alpha_1^\varepsilon, \beta_1})\mid \mathcal{F}_i]=\sum_{j\geq
1}I_{\Gamma_j}E[V^{\Pi}(t_l, X_{t_l}^{t, x, \alpha_j, \beta_1})\mid
\mathcal{F}_i],\mbox { for all }\beta_1\in\mathcal{B}^\Pi_{t,t_l}.$$
\noindent Therefore, again for all $\beta_1\in\mathcal{B}^\Pi_{t,t_l}$,
$$\begin{array}{lll}\displaystyle\label{3.14}&\widetilde{V}^{\Pi}_l(t,x)\leq \sum_{k\geq
1}I_{\Gamma_k}I_1(\alpha_k)+\varepsilon\\
&\displaystyle\le \sum_{k\geq 1}I_{\Gamma_k}E[V^{\Pi}(t_l,
X_{t_l}^{t, x, \alpha_k, \beta_1})\mid
\mathcal{F}_i]+\varepsilon=E[V^{\Pi} (t_l, X_{t_l}^{t, x,
\alpha_1^\varepsilon, \beta_1})\mid \mathcal{F}_i]+\varepsilon.\\
\end{array}$$

\noindent Given now an arbitrary $\beta\in\mathcal{B}^\Pi_{t,T}$ and any $u_2
\in{\cal U}^\Pi_{t_l,T}$ we make the following particular choice
of $\beta_1$:
$$\beta_1(u_1)(s):=\beta(u)(s),\, s\in[t,t_l],\ u_1\in{\cal U}^\Pi_{t,t_l},$$

\noindent where

\centerline{$\displaystyle u(s):=\left\{\begin{array}{cc}
                         u_1(s),& s\in[t,t_l] \\
                         u_2(s),& s\in (t_l,T].
                      \end{array}\right.$}
\smallskip

\noindent Abbreviating, in what follows we will write for such a
process composed over different intervals:

\centerline{$u=u_1\oplus u_2$,\, $\beta_1(u_1)
=\beta(u_1\oplus u_2)_{|[t,t_l]}$.}

\noindent We observe that $\beta_1\in{\cal B}^\Pi_{t, t_l}$, and as
consequence of its nonanticipativity property,  it is independent of
the particular choice of $u_2.$ Consequently,

\be\label{3.15}\widetilde{V}^{\Pi}_l(t,x)\leq \varepsilon +
E[V^{\Pi}(t_l, X_{t_l}^{t,x,\alpha_1^\varepsilon,\beta_1})\mid
\mathcal{F}_i],\, P\mbox{-a.s.},\ee
\noindent for our particular choice of $\beta_1$, since we have seen that this
relation holds true for all $\beta_1\in{\cal B}^\Pi_{t,t_l}$.

\medskip

\underline{Step 2}. Let us now continue by discussing the expression
$V^{\Pi}(t_l, X_{t_l}^{t,x, \alpha_1^\varepsilon,\beta_1})$ inside
the above conditional expectation in (\ref{3.15}). For this end we consider a
partition $(O_j)_{j\geq 1}$ of $R^d$, composed of nonempty Borel
sets, such that, for all $j\ge 1,$ the maximal distance between two
elements of ${\cal O}_j$ is less than or equal to $\varepsilon$. Let
us fix in all ${\cal O}_j$ an arbitrary element $y_j.$

In analogy to Step 1 we see also here that, for every $j\ge 1,$
there exists $\alpha^{\varepsilon, j}_2\in \mathcal{A}^\Pi_{t_l,T}$\
such that
$$\begin{array}{lll}
&\displaystyle V^{\Pi}(t_l,y_j)=\esssup_{\alpha_2\in
\mathcal{A}^\Pi_{t_l, T}} \essinf_{\beta_2\in \mathcal{B}^\Pi_{t_l,
T}}E[g(X_T^{t_l,y_j,\alpha_2,\beta_2})\mid \mathcal{F}_l]\\
&\displaystyle\leq \varepsilon+\essinf_{\beta_2\in \mathcal{B}^\Pi_{t_l, T}}
E[g(X_T^{t_l,y_j,\alpha^{\varepsilon, j}_2,\beta_2})\mid
\mathcal{F}_l],\  P\mbox{-a.s.}\\
\end{array}$$

\noindent In dependence of our $\beta\in{\cal B}^\Pi_{t,T}$ already
chosen in the preceding Step 1 we want to make now a particular
choice of $\beta_2\in \mathcal{B}^\Pi_{t_l, T}$. For this end we notice
that, since $(\alpha_1^\varepsilon, \beta_1)\in
\mathcal{A}^\Pi_{t,t_l} \times \mathcal{B}^\Pi_{t,t_l}$, due to
Lemma \ref{lemma 3.1} there exists a unique couple
$(u_1^\varepsilon,v_1^\varepsilon)\in\mathcal{U}^\Pi_{t,t_l}\times
\mathcal{V}^\Pi_{t,t_l}$ such that
$\alpha_1^\varepsilon(v_1^\varepsilon)=u_1^\varepsilon,$ and
$\beta_1(u_1^\varepsilon)=v_1^\varepsilon.$ With the notations
introduced in Step 1 we define now

\centerline{$\displaystyle \beta_2(u_2):=\beta(u_1^\varepsilon\oplus
u_2)_{|[t_l,T]},\, u_2\in{\cal U}^\Pi_{t_l,T}.$}

\noindent It is straight-forward to check that $\beta_1\in{\cal
B}^\Pi_{t_l,T}$, and, consequently, \be\displaystyle
V^{\Pi}(t_l,y_j)\leq
\varepsilon+E[g(X_T^{t_l,y_j,\alpha^{\varepsilon, j}_2,\beta_2})\mid
\mathcal{F}_l],\, P\mbox{-a.s.} \ee \noindent  Thus, from the
Lipschitz continuity of $V^{\Pi}(t_l,.)$ (see Lemma \ref{lemma 3.2})
we obtain \be\label{3.17}\displaystyle
\begin{array}{lll}
& &V^{\Pi}(t_l, X_{t_l}^{t,x,\alpha^\varepsilon_1,\beta_1})\leq
C\varepsilon + \displaystyle\sum_{j\geq1}V^{\Pi}(t_l,
y_j)I_{\{X_{t_l}^{t,x,\alpha^\varepsilon_1, \beta_1}\in
O_j\}}\\
& &\leq (C+1)\varepsilon\displaystyle
+\sum_{j\geq1}I_{\{X_{t_l}^{t,x,\alpha^\varepsilon_1,\beta_1}\in
O_j\}}E[g(X_T^{t_l,y_j,\alpha^{\varepsilon, j}_2,\beta_2})\mid
\mathcal{F}_l].\end{array}\ee Let us introduce now
$\displaystyle\alpha^\varepsilon_2:=\sum_{j\geq
1}I_{\{X_{t_l}^{t,x,\alpha_1^\varepsilon,\beta_1}\in O_j\}} \alpha^{
\varepsilon, j}_2$. It is easy to verify that $\alpha^\varepsilon_2$
belongs to $\mathcal{A}^\Pi_{t_l,T}$. On the other hand, for every
$(\alpha^{\varepsilon, j}_2, \beta_2) \in \mathcal{A}^\Pi_{t_l,T}
\times \mathcal{B}^\Pi_{t_l,T}$, there exists a unique couple
$(u^{\varepsilon, j}_2, v^{\varepsilon, j}_2)\in
\mathcal{U}^\Pi_{t_l,T}\times \mathcal{V}^\Pi_{t_l,T}$, such that

\centerline{$\alpha^{\varepsilon, j}_2(v^{\varepsilon, j}_2)=u^{
\varepsilon, j}_2,\ \
 \beta_2(u^{\varepsilon, j}_2)=v^{\varepsilon, j}_2,$}

\noindent and with its help we define

\centerline{$\displaystyle (u_2^\varepsilon,v_2^\varepsilon):=
\sum_{j\geq1}I_{\{X_{t_l}^{t,x,\alpha^\varepsilon_1,\beta_1}\in
O_j\}}(u^{\varepsilon, j}_2,v^{\varepsilon, j}_2)\in
\mathcal{U}^\Pi_{t_l,T} \times \mathcal{V}^\Pi_{t_l,T}$.}

\noindent Then, according to the definition of
$\alpha^\varepsilon_2$ and the nonanticipativity of the elements of
${\cal A}^\Pi_{t_l,T}$ (see Definition \ref{NAD} for nonanticipative
strategies), since $v_2^\varepsilon=v^{\varepsilon, j}_2$\ on
$\{X_{t_l}^{t,x,\alpha^\varepsilon_1,\beta_1}\in
O_j\}\times[t_l,T]$, we also have
$$\alpha_2^\varepsilon(v^{\varepsilon}_2)=\alpha^{
\varepsilon, j}_2(v^{\varepsilon}_2)=\alpha^{\varepsilon,
j}_2(v^{\varepsilon, j}_2) =u^{\varepsilon, j}_2=u_2^\varepsilon\ \mbox{on}\ \{X_{t_l}^{t,x,\alpha^\varepsilon_1,\beta_1}\in
O_j\}\times[t_l,T],\, j\ge 1.$$

\noindent Consequently, since $({\cal O}_j)_{j\ge 1}$ forms a
partition of $R^d$, it holds
$\alpha_2^\varepsilon(v^{\varepsilon}_2)=u_2^\varepsilon$.
Analogously, we obtain $\beta_2(u_2^\varepsilon)=v_2^\varepsilon$.
Moreover, recalling that $(u_1^\varepsilon, v_1^\varepsilon)\in
\mathcal{A}^\Pi_{t,t_l} \times \mathcal{B}^\Pi_{t,t_l}$ has been
introduced such that $\alpha_1^\varepsilon
(v_1^\varepsilon)=u_1^\varepsilon,\ \beta_1(u_1^\varepsilon)
=v_1^\varepsilon,$ we define a couple of controls $(u^\varepsilon,
v^\varepsilon)\in \mathcal{U}^\Pi_{t,T}\times \mathcal{V}^\Pi_{t,T}$
by putting $u^\varepsilon:=u_1^\varepsilon\oplus u_2^\varepsilon$
and $v^\varepsilon:=v_1^\varepsilon\oplus v_2^\varepsilon$.
Furthermore, we introduce
$$\alpha^\varepsilon(v):=\alpha_1^\varepsilon(v_1)\oplus
\alpha_2^\varepsilon(v_2),\ \mbox{for}\ v_1:=v_{|[t,t_l]},\ v_2:=v_{|[t_l,T]},\ v\in
\mathcal{V}^\Pi_{t,T}.$$

\noindent Then  $\alpha^\varepsilon\in \mathcal{A}^\Pi_{t,T}$, and
$\alpha^\varepsilon(v^\varepsilon)=\alpha_1^\varepsilon(v_1^\varepsilon)\oplus
\alpha_2^\varepsilon(v_2^\varepsilon)=u_1^\varepsilon \oplus
u_2^\varepsilon=u^\varepsilon$, and, on the other hand, recalling the definition of
$\beta_1$ and $\beta_2,$ we have
$$\beta(u^\varepsilon)=\beta(u_1^\varepsilon \oplus
u_2^\varepsilon)\mid _{[t,t_l)}\oplus \beta(u_1^\varepsilon \oplus
u_2^\varepsilon)\mid _{[t_l,T]}=\beta_1(u_1^\varepsilon)\oplus
\beta_2(u_2^\varepsilon)=v_1^\varepsilon \oplus
v_2^\varepsilon=v^\varepsilon.$$

\noindent This shows that
$(u^\varepsilon,v^\varepsilon)\in\mathcal{U}^\Pi_{t,T}\times
\mathcal{V}^\Pi_{t,T}$ is the unique couple of controls which is
associated with $(\alpha^\varepsilon,\beta)\in \mathcal{A}^\Pi_{t,T}
\times\mathcal{B}^\Pi_{t,T}.$ Hence, \be\label{3.19}
X_T^{t_l,X_{t_l}^{t,x,\alpha^\varepsilon_1,\beta_1},u_2^\varepsilon,
v_2^\varepsilon}=X_T^{t_l,X_{t_l}^{t,x,u^\varepsilon_1,v_1^\varepsilon},
u_2^\varepsilon,v_2^\varepsilon}=X_T^{t,x,u^\varepsilon,v^\varepsilon}
=X_T^{t,x,\alpha^\varepsilon,\beta},\ee \noindent and, taking into account in
addition the Lipschitz property of $g$ , we get
\be\label{3.18}\begin{array}{lll} & &\sum_{j\geq
1}I_{\{X_{t_l}^{t,x,\alpha^\varepsilon_1,\beta_1}\in
O_j\}}g(X_T^{t_l,y_j,\alpha^{\varepsilon, j}_2,\beta_2})\\
& &=\sum_{j\geq 1}I_{\{X_{t_l}^{t,x,\alpha^\varepsilon_1,\beta_1}\in
O_j\}}g(X_T^{t_l,y_j,u^{\varepsilon, j}_2,v^{\varepsilon, j}_2})\\
& &=\sum_{j\geq 1}I_{\{X_{t_l}^{t,x,\alpha^\varepsilon_1,\beta_1}\in
O_j\}}g(X_T^{t_l,y_j,u^\varepsilon_2,v_2^\varepsilon})\\
& &\leq g(X_T^{t_l,X_{t_l}^{t,x,\alpha^\varepsilon_1,\beta_1},
u_2^\varepsilon,v_2^\varepsilon})+C\varepsilon=
g(X_T^{t,x,\alpha^\varepsilon,\beta})+C\varepsilon.
\end{array}\ee
Consequently, from (\ref{3.17}) and (\ref{3.18}), \be \label{3.20}
V^{\Pi}(t_l, X_{t_l}^{t,x,\alpha^\varepsilon_1,\beta_1})\leq
C\varepsilon + E[g(X_T^{t,x,\alpha^\varepsilon,\beta})\mid
\mathcal{F}_l],\ P\mbox{-a.s.} \ee Furthermore, from (\ref{3.15}),
\be \label{3.21}\widetilde{V}^{\Pi}_l(t,x)\leq \varepsilon +
E[V^{\Pi}(t_l, X_{t_l}^{t,x,\alpha_1^\varepsilon,\beta_1})\mid
\mathcal{F}_i]\leq C\varepsilon +
E[g(X_T^{t,x,\alpha^\varepsilon,\beta})\mid \mathcal{F}_i],\
P\mbox{-a.s.}\ee \noindent This relation holds true for our
arbitrarily chosen and, hence, for all $\beta \in
\mathcal{B}^\Pi_{t,T}.$ It follows that
\be\label{3.22}\begin{array}{lll} \widetilde{V}^{\Pi}_l(t,x) &\leq
&C\varepsilon+\essinf_{\beta\in \mathcal{B}^\Pi_{t,t_l}}
E[g(X_T^{t,x,\alpha^\varepsilon,\beta})\mid
\mathcal{F}_i]\\
&\leq& C\varepsilon+\esssup_{\alpha\in\mathcal{A}^\Pi_{t,t_l}}
\essinf_{\beta\in \mathcal{B}^\Pi_{t,t_l}}E[g(X_T^{t,x,\alpha,
\beta})\mid \mathcal{F}_i]\\
&=& C\varepsilon+V^\Pi(t,x),\ \ P\mbox{-a.s.},
\end{array}\ee \noindent and the
statement follows by letting $\varepsilon$ tend to zero.
\end{proof}

\bigskip

Let us now come the converse statement to Lemma 3.1.
\begin{lemma} Under our standard assumptions we have $\widetilde{V}^{\Pi}_l(t,x)\ge
V^\Pi(t,x),$ $P$-a.s.
\end{lemma}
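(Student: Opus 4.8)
The plan is to establish the reverse inequality by exhibiting, for each $\varepsilon>0$, a \emph{single} first-stage strategy $\alpha_1\in\mathcal{A}^\Pi_{t,t_l}$ that secures, uniformly over the opponent's first-stage response, a two-stage value within $O(\varepsilon)$ of $V^\Pi(t,x)$. Concretely, I would first pick an $\varepsilon$-optimal strategy $\alpha^\varepsilon\in\mathcal{A}^\Pi_{t,T}$ for $V^\Pi(t,x)$, i.e.\ with $V^\Pi(t,x)\le \essinf_{\beta\in\mathcal{B}^\Pi_{t,T}}E[g(X_T^{t,x,\alpha^\varepsilon,\beta})\mid\mathcal{F}_i]+\varepsilon$, and set $\alpha_1:=\alpha^\varepsilon|_{[t,t_l]}\in\mathcal{A}^\Pi_{t,t_l}$. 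The goal is then to prove $\essinf_{\beta_1\in\mathcal{B}^\Pi_{t,t_l}}E[V^\Pi(t_l,X_{t_l}^{t,x,\alpha_1,\beta_1})\mid\mathcal{F}_i]\ge V^\Pi(t,x)-C\varepsilon$, which gives $\widetilde V^\Pi_l(t,x)\ge V^\Pi(t,x)-C\varepsilon$, and the claim follows by letting $\varepsilon\to0$.

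Next I fix an arbitrary $\beta_1\in\mathcal{B}^\Pi_{t,t_l}$; by Lemma \ref{lemma 3.1} it is associated with a unique couple $(u_1^\varepsilon,v_1^\varepsilon)$, and I set $\xi:=X_{t_l}^{t,x,u_1^\varepsilon,v_1^\varepsilon}$, which is $\mathcal{F}_l$-measurable. From $\alpha^\varepsilon$ I read off a second-stage strategy by freezing the first-stage control, $\alpha_2(v_2):=\alpha^\varepsilon(v_1^\varepsilon\oplus v_2)|_{[t_l,T]}\in\mathcal{A}^\Pi_{t_l,T}$ (its non-anticipativity is inherited from that of $\alpha^\varepsilon$; note it depends on the fixed $\beta_1$ only through $v_1^\varepsilon$). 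As in Step 2 of Lemma \ref{lemma 1a}, I discretize the state: choose a Borel partition $(O_j)_{j\ge1}$ of $R^d$ of mesh $\le\varepsilon$ with reference points $y_j\in O_j$, and for each $j$ use the definition of $V^\Pi(t_l,y_j)$ together with an adversary near-minimizer $\beta_2^{\varepsilon,j}\in\mathcal{B}^\Pi_{t_l,T}$ to obtain $V^\Pi(t_l,y_j)\ge E[g(X_T^{t_l,y_j,\alpha_2,\beta_2^{\varepsilon,j}})\mid\mathcal{F}_l]-\varepsilon$ (the inequality comes from dropping to our particular $\alpha_2$ in the $\esssup$, and the $\varepsilon$ from the near-minimizer in the $\essinf$).

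I then glue along the $\mathcal{F}_l$-partition $(\{\xi\in O_j\})_{j\ge1}$ of $\Omega$: I set $\beta_2^\varepsilon:=\sum_j I_{\{\xi\in O_j\}}\beta_2^{\varepsilon,j}\in\mathcal{B}^\Pi_{t_l,T}$ and glue the associated controls $(u_2^\varepsilon,v_2^\varepsilon):=\sum_j I_{\{\xi\in O_j\}}(u_2^{\varepsilon,j},v_2^{\varepsilon,j})$ exactly as in Lemma \ref{lemma 1a}, and I define a global adversary $\beta^\varepsilon(u):=\beta_1(u_1)\oplus\beta_2^\varepsilon(u_2)\in\mathcal{B}^\Pi_{t,T}$ for $u=u_1\oplus u_2$. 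The key point is the concatenation/flow identity: the unique couple associated with $(\alpha^\varepsilon,\beta^\varepsilon)$ is $(u_1^\varepsilon\oplus u_2^\varepsilon,\,v_1^\varepsilon\oplus v_2^\varepsilon)$, whence $X_T^{t,x,\alpha^\varepsilon,\beta^\varepsilon}=X_T^{t_l,\xi,u_2^\varepsilon,v_2^\varepsilon}$, the analog of (\ref{3.19}). Combining the cellwise estimates above with the Lipschitz continuity of $V^\Pi(t_l,\cdot)$ and of $g$ (Lemmas \ref{lemma 3.2} and \ref{estimates-X}), on each $\{\xi\in O_j\}$ one gets $V^\Pi(t_l,\xi)\ge E[g(X_T^{t_l,\xi,u_2^\varepsilon,v_2^\varepsilon})\mid\mathcal{F}_l]-C\varepsilon$; since the cells exhaust $\Omega$ this holds globally, so by the flow identity $V^\Pi(t_l,\xi)\ge E[g(X_T^{t,x,\alpha^\varepsilon,\beta^\varepsilon})\mid\mathcal{F}_l]-C\varepsilon$. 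Taking $E[\,\cdot\mid\mathcal{F}_i]$, using the tower property and the $\varepsilon$-optimality of $\alpha^\varepsilon$ against the admissible $\beta^\varepsilon$, yields $E[V^\Pi(t_l,\xi)\mid\mathcal{F}_i]\ge V^\Pi(t,x)-C\varepsilon$, as desired.

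I expect the main obstacle to be the bookkeeping of the concatenation rather than any single estimate: one must verify that $\beta^\varepsilon$ is genuinely an NAD-strategy on $[t,T]$ and, above all, that the unique couple of controls attached to $(\alpha^\varepsilon,\beta^\varepsilon)$ splits as $(u_1^\varepsilon\oplus u_2^\varepsilon,\,v_1^\varepsilon\oplus v_2^\varepsilon)$, so that the semigroup identity for the state holds. This is precisely where the definition of $\alpha_2$ by freezing $v_1^\varepsilon$, the non-anticipativity of all the objects involved, and the $\mathcal{F}_l$-measurability of $\xi$ and of the cell sets $\{\xi\in O_j\}$ (which legitimizes the gluing of second-stage strategies and controls) must be invoked together with care.
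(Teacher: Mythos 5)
Your argument is correct and follows essentially the same route as the paper's proof: restriction of a global first-player strategy to the first stage, discretization of the state at $t_l$ into cells $O_j$ with cell-wise near-minimizing second-stage responses $\beta_2^{\varepsilon,j}$, gluing along the $\mathcal{F}_l$-partition $(\{\xi\in O_j\})_{j\ge 1}$, and the concatenation identity for the unique associated couple of controls. The only (harmless) difference is where the $\varepsilon$-optimal selection is performed -- you pick $\alpha^\varepsilon$ nearly attaining $V^\Pi(t,x)$ and let $\beta_1$ be arbitrary, whereas the paper fixes an arbitrary $\alpha$, selects $\beta_1^\varepsilon$ nearly attaining the inner essential infimum of $\widetilde{V}^{\Pi}_l$, and takes the essential supremum over $\alpha$ only at the very end; both selections rest on the same countable-gluing construction as in Step 1 of the proof of the preceding lemma, which you should invoke explicitly when asserting the existence of $\alpha^\varepsilon$.
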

\begin{proof} Because of the symmetry of some arguments to those
in the proof of Lemma 3.1, this proof here will be kept
shorter.

Let us fix any $\alpha \in \mathcal{A}^\Pi_{t,T}$ and, for some
arbitrarily chosen $v_2\in \mathcal{V}_{t_l,T},$ we put
$\alpha_1(v_1):=\alpha(v_1 \oplus v_2)\mid _{|[t,t_l)}$,  $v_1\in
\mathcal{V}_{t,t_l}$. Obviously, such defined mapping $\alpha_1$
belongs to $\mathcal{A}^\Pi_{t,t_l}$ and, as a consequence of its
nonanticipativity, it doesn't depend on the choice of $v_2$. Thus,
from the definition of $\widetilde{V}^\Pi_l(t,x)$ it follows that

\be\label{3.23}\widetilde{V}^\Pi_l(t,x)\geq \essinf_{\beta_1\in
\mathcal{B}^\Pi_{t,t_l}}E[V^\Pi(t_l,X_{t_l}^{t,x,\alpha_1,\beta_1})\mid
\mathcal{F}_i],\, P\mbox{-a.s.},\ee

\noindent  and, similarly to (\ref{3.15}), we can show that, for any
given $\varepsilon>0$, there exists some $\beta^\varepsilon_1\in
\mathcal{B}^\Pi_{t,t_l}$\  such that

\be\label{3.24} \widetilde{V}_{l}^{\Pi}(t,x)\geq
E[V^\Pi(t_{l},X_{t_{l}}^{t,x,\alpha_{1},\beta_{1}^{\varepsilon}})
|\mathcal{F}_{i}]-\varepsilon,\, P\mbox{-a.s.}\ee

In analogy to the proof of Lemma 3.1 we discuss now the
expression
$V^\Pi(t_{l},X_{t_{l}}^{t,x,\alpha_{1},\beta_{1}^{\varepsilon}})$
inside the above conditional expectation in (\ref{3.24}). For this we let
$(u_1^\varepsilon,v_1^\varepsilon)\in\mathcal{U}^\Pi_{t,t_l}\times
\mathcal{V}^\Pi_{t,t_l}$ denote the unique couple of admissible
controls associated with $(\alpha_1, \beta_1^\varepsilon)\in
\mathcal{A}^\Pi_{t,t_l} \times \mathcal{B}^\Pi_{t,t_l}$ by Lemma
\ref{lemma 3.1}, i.e., such that
$\alpha_1(v_1^\varepsilon)=u_1^\varepsilon,$\
$\beta_1^\varepsilon(u_1^\varepsilon)=v_1^\varepsilon,$ and we
introduce the NAD-strategy $\alpha_2^\varepsilon\in
\mathcal{A}^\Pi_{t_l, T}$ by putting
$\alpha_2^\varepsilon(v_2):=\alpha(v_1^\varepsilon\oplus v_2)\mid
_{[t_l,T]},\, v_2\in{\cal V}^\Pi_{t_l, T}.$ In order to construct an
appropriate NAD-strategy $\beta_2^\varepsilon\in
\mathcal{B}^\Pi_{t_l, T},$ we use the Borel partition $(\mathcal
{O}_{j})_{j\geq 1}$ and the sequence $y_{j}\in{\cal O}_j, \, j\ge
1,$ introduced in the second step of the proof of Lemma 3.1. Choosing $\beta_2^{\varepsilon,j}\in{\cal B}^\Pi_{t_l,T}$
such that

\be\label{3.25-1}\begin{array}{lll}
V^{\Pi}(t_{l},y_{j})&\geq &\essinf_{\beta_{2}\in \mathcal {B}_{t_{l},
T}^{\Pi}}E[g(X_{T}^{t_{l},y_{j},\alpha_{2}^{\varepsilon},\beta_{2}})
|\mathcal {F}_{{l}}]\\
&\geq & E[g(X_{T}^{t_{l},y_{j},\alpha_{2}^{\varepsilon},\beta_{2}^{
\varepsilon,j}})|\mathcal {F}_{{l}}]-\varepsilon,\, P\mbox{-a.s.},\,
j\ge 1,
\end{array}\ee

\noindent we define $\beta_{2}^{\varepsilon}\in \mathcal
{B}_{t_{l},T}^{\Pi}$ and $\beta^{\varepsilon}\in \mathcal
{B}_{t,T}^{\Pi}$ by putting

\be\label{3.26}\begin{array}{lll}
&&\beta_{2}^{\varepsilon}(u_2):=\displaystyle\sum_{j\geq
1}I_{\{X_{t_{l}}^{t,x, \alpha_{1},\beta_{1}^{\varepsilon}}\in
\mathcal {O}_{j}\}}
\beta_{2}^{\varepsilon,j}(u_2),\, u_2\in \mathcal {U}_{t_{l},T}^{\Pi},\\
&&\beta^{\varepsilon}(u):=\beta_{1}^{\varepsilon}(u_{1})\oplus
\beta_{2}^{\varepsilon}(u_{2}),\
u_{1}:=u_{|[t,t_l)},\,u_{2}:=u_{|[t_l,T]},\, u\in \mathcal
{U}_{t,T}^{\Pi}.
\end{array}\ee

\noindent Consequently, taking into account the Lipschitz property
of $V^\Pi(t_l,.)$ and using (\ref{3.25-1}), we have similarly to Step
2 of the proof of the preceding Lemma 3.1

\be\label{3.27bis}\begin{array}{lll} \widetilde{V}_{l}^{\Pi}(t,x)
&\geq
&E[V^\Pi(t_{l},X_{t_{l}}^{t,x,\alpha_{1},\beta_{1}^{\varepsilon}})
|\mathcal{F}_{i}]-\varepsilon\\
&\geq &\displaystyle\sum_{j\geq
1}E[I_{\{X_{t_{l}}^{t,x,\alpha_{1},\beta_{1}^{\varepsilon}}
\in \mathcal {O}_{j}\}}V^{\Pi}(t_{l},y_{j})|\mathcal {F}_{i}]-C\varepsilon\\
&\geq &\displaystyle\sum_{j\geq
1}E[I_{\{X_{t_{l}}^{t,x,\alpha_{1},\beta_{1}^{\varepsilon}} \in
\mathcal {O}_{j}\}}g(X_{T}^{t_{l},y_{j},\alpha_{2}^{\varepsilon},
\beta_{2}^{\varepsilon,j}})|\mathcal {F}_{i}]-C\varepsilon\\
&=&\displaystyle\sum_{j\geq
1}E[I_{\{X_{t_{l}}^{t,x,\alpha_{1},\beta_{1}^{\varepsilon}} \in
\mathcal {O}_{j}\}}g(X_{T}^{t_{l},y_{j},\alpha_{2}^{\varepsilon},
\beta_{2}^{\varepsilon}})|\mathcal {F}_{i}]-C\varepsilon\\
&\geq
&E[g(X_{T}^{t_{l},X_{t_{l}}^{t,x,\alpha_{1},\beta_{1}^{\varepsilon}},
\alpha_{2}^{\varepsilon},\beta_{2}^{\varepsilon}})|\mathcal
{F}_{i}]-C\varepsilon,\, P\mbox{-a.s.}
\end{array}\ee

\noindent Let $(u_2^\varepsilon,v_2^\varepsilon)\in{\cal
U}^\Pi_{t_l,T}\times{\cal V}^\Pi_{t_l,T}$ be the unique couple of
controls associated with $(\alpha_2^\varepsilon,
\beta_2^\varepsilon)\in{\cal A}^\Pi_{t_l,T}\times{\cal
B}^\Pi_{t_l,T}$ by Lemma \ref{lemma 3.1}. Then, it is
straight-forward to show that the couple
$(u^\varepsilon,v^\varepsilon)=(u_1^\varepsilon\oplus
u_2^\varepsilon,v_1^\varepsilon\oplus v_2^\varepsilon)\in{\cal
U}^\Pi_{t,T}\times{\cal V}^\Pi_{t,T}$ verifies
$\alpha(v^\varepsilon)=u^\varepsilon,\, \beta^(u^\varepsilon)=
v^\varepsilon.$ Consequently,

\be\label{3.27}\begin{array}{lll} \widetilde{V}_{l}^{\Pi}(t,x) &\geq
&E[g(X_{T}^{t_{l},X_{t_{l}}^{t,x,\alpha_{1},\beta_{1}^{\varepsilon}},
\alpha_{2}^{\varepsilon},\beta_{2}^{\varepsilon}})|\mathcal
{F}_{i}]-C\varepsilon\\
&\geq &E[g(X_{T}^{t_{l},X_{t_{l}}^{t,x,u^\varepsilon_{1},v_{1}^{
\varepsilon}},u_{2}^{\varepsilon},v_{2}^{\varepsilon}})|\mathcal
{F}_{i}]-C\varepsilon\\
&=&E[g(X_{T}^{t,x,u^\varepsilon,v^{\varepsilon}})|\mathcal {F}_{i}]-C\varepsilon\\
&=&E[g(X_{T}^{t,x,\alpha,\beta^{\varepsilon}})|\mathcal {F}_{i}]-C\varepsilon\\
&\geq &\essinf_{\beta\in\mathcal
{B}_{t,T}^{\Pi}}E[g(X_{T}^{t,x,\alpha,\beta}) |\mathcal
{F}_{i}]-C\varepsilon,\ P\mbox{-a.s.}
\end{array}\ee

\noindent Taking into account the arbitrariness of  $\alpha\in
\mathcal {A}_{t,T}^{\Pi}$ and of $\varepsilon>0,$ we conclude
\be\label{3.28}\widetilde{V}_{l}^{\Pi}(t,x)\geq V^{\Pi}(t,x),\,
P\mbox{-a.s.},\ee and the proof is complete.
\end{proof}
Obviously, the proof of the DPP for
$V^\Pi$ is an immediate consequence of the both preceding lemmas,
and the proof for $U^\Pi$ is analogous.

\smallskip

After having established the DPP for
the lower and the upper value functions along a partition $V^\Pi$ and $U^\Pi$, we can
now show that these a priori random fields are deterministic. More
precisely, we have the following
\begin{theorem}\label{th3.1}
For all partition $\Pi$ of the interval $[0,T]$, the lower value function along a partition
$V^{\Pi}$\ as well as the upper one $U^{\Pi}$ is deterministic, i.e.,
for all $(t,x)\in[0,T]\times R^d,$
$$V^\Pi(t,x)=E\left[V^{\Pi}(t,x)\right]\,  \mbox{
and }\, U^\Pi(t,x)=E\left[U^{\Pi}(t,x)\right],\, P\mbox{-a.s.}$$
\end{theorem}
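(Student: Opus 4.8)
The plan is to prove the statement by a \emph{downward} induction over the points of the partition $\Pi = \{0 = t_0 < \dots < t_n = T\}$, combining the dynamic programming principle of Theorem \ref{th3.2} with an invariance argument in the spirit of \cite{Buckdahn-Li-2008}. I treat $V^\Pi$ only, the argument for $U^\Pi$ being symmetric. Since $V^\Pi(t,x)$ is $\mathcal{F}_i$-measurable whenever $t_i \le t < t_{i+1}$, it suffices to show that for every such $t$ this random variable is $P$-a.s. equal to a constant (necessarily its expectation).

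The induction runs on $i$ from $n-1$ down to $0$, the hypothesis being that $V^\Pi(t_{i+1},\cdot)$ is a deterministic function; for $i=n-1$ this is simply $V^\Pi(T,\cdot)=g$, and otherwise it is the conclusion of the previous step. Writing $W := V^\Pi(t_{i+1},\cdot)$, which is a genuine Lipschitz function by Lemma \ref{lemma 3.2}, the DPP with $l=i+1$ gives, for $t_i \le t < t_{i+1}$, the representation $V^\Pi(t,x) = \esssup_{\alpha \in \mathcal{A}^\Pi_{t,t_{i+1}}} \essinf_{\beta \in \mathcal{B}^\Pi_{t,t_{i+1}}} E[W(X_{t_{i+1}}^{t,x,\alpha,\beta}) \mid \mathcal{F}_i]$. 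The point of reducing to the single block $[t,t_{i+1}]$ is that there the admissible controls use only $\mathcal{F}_i$-measurable indicator sets together with $\mathcal{G}_{i+1}$- resp. $\mathcal{H}_{i+1}$-valued control values, so the only randomness beyond $\mathcal{F}_i$ lives in the $(i+1)$-st coordinate block, which is independent of $\mathcal{F}_i$ and carries the conditional independence of the two players.

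The core step is the invariance. I fix an arbitrary measure-preserving bijection $\tau:\Omega \to \Omega$ acting only on the first $i$ coordinates $\zeta_1,\dots,\zeta_i$ and equal to the identity on all later coordinates; thus $\tau$ preserves $P$, satisfies $\tau^{-1}\mathcal{F}_i = \mathcal{F}_i$, and leaves $\mathcal{G}_{i+1},\mathcal{H}_{i+1}$ untouched (this is the special, measure-preserving instance of the law-equivalent transformations announced in the introduction). I would then check that composition with $\tau$ maps $\mathcal{U}^\Pi_{t,t_{i+1}},\mathcal{V}^\Pi_{t,t_{i+1}}$ and the strategy classes $\mathcal{A}^\Pi_{t,t_{i+1}},\mathcal{B}^\Pi_{t,t_{i+1}}$ bijectively onto themselves: the indicator sets $\Gamma_{i+1,k}\in\mathcal{F}_i$ go to $\tau^{-1}(\Gamma_{i+1,k})\in\mathcal{F}_i$, the conditional independence given $\mathcal{F}_i$ is preserved since block $i+1$ is fixed, and the NAD property is invariant. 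Since the flow is defined pathwise in (\ref{3.1}) and $W$ is deterministic, the transported couple $(\alpha^\tau,\beta^\tau)$ satisfies $X_{t_{i+1}}^{t,x,\alpha,\beta}\circ\tau = X_{t_{i+1}}^{t,x,\alpha^\tau,\beta^\tau}$; together with the identity $E[\eta\mid\mathcal{F}_i]\circ\tau = E[\eta\circ\tau\mid\mathcal{F}_i]$ (valid because $\tau$ preserves $P$ and $\mathcal{F}_i$) and the fact that a bijective reindexing leaves an essential supremum/infimum unchanged, this yields $V^\Pi(t,x)\circ\tau = V^\Pi(t,x)$, $P$-a.s.

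It then remains to invoke the elementary measure-theoretic lemma that any $\mathcal{F}_i$-measurable random variable $\xi$ with $\xi\circ\tau = \xi$ a.s. for every measure-preserving bijection $\tau$ of the non-atomic first $i$ coordinate blocks is $P$-a.s. constant: otherwise $\xi$ would have two disjoint level sets of positive measure and some such $\tau$ would map a positive-measure part of one into the other, contradicting the invariance. Applied to $\xi = V^\Pi(t,x)$ this gives $V^\Pi(t,x)=E[V^\Pi(t,x)]$, $P$-a.s., closing the induction. I expect the genuine difficulty to lie not in this last lemma but in the invariance step, namely in defining the transported strategies $\alpha^\tau,\beta^\tau$ precisely and verifying that they are again NAD-strategies respecting the conditional-independence constraint, so that the $\esssup/\essinf$ is \emph{exactly} preserved; the reduction via the DPP to a single block with deterministic terminal cost $W$ is what keeps this bookkeeping tractable.
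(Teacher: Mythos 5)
Your proposal is correct and shares the paper's overall architecture (backward induction via the DPP, invariance of $V^\Pi(t,x)$ under a class of bijections of $\Omega$ acting only on the coordinates generating $\mathcal{F}_i$, then a ``invariant $\Rightarrow$ constant'' lemma), but it diverges at the one point where the real work is done. The paper deliberately works with the larger class of bijections whose law is merely \emph{equivalent} to $P$ (not measure-preserving), because its proof of the constancy lemma (Lemma \ref{lemma 3.3}) is by explicit construction: given a nonnegative invariant $\xi$, it builds the coordinatewise transformation $\tau(\omega)=(\theta_{i,j}(\omega),\varphi(\theta_{i,j}(\omega),\zeta_{i,j}(\omega)))$ with $\varphi$ a normalized primitive of $\xi+1$; this $\tau$ twists the law by the density $\partial_s\varphi>0$, and the invariance identity collapses to $E[\xi^2]=E[(\int_0^1\xi(\theta_{i,j},s)\,ds)^2]$, forcing $\xi$ not to depend on the coordinate $(i,j)$; iterating over all coordinates gives constancy. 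You instead restrict to \emph{measure-preserving} bijections and prove constancy by swapping two disjoint positive-measure level sets $\{\xi\le c\}$ and $\{\xi>c\}$. This is valid --- your class is a subclass of the paper's, so the invariance step is if anything easier, and the swap argument does yield constancy --- but note that producing a genuine pointwise measure-preserving Borel bijection between two equal-measure positive-measure subsets of $([0,1]^2)^i$ requires the measure isomorphism theorem for standard non-atomic spaces (plus a mod-null patching to upgrade a mod-$0$ isomorphism to an everywhere-defined bijection), a classical but nontrivial input that the paper's explicit formula avoids entirely. A second, minor difference: for intermediate times $t_i<t<t_{i+1}$ you apply the invariance argument directly to the single-block DPP representation, whereas the paper first treats the gridpoints $t_l$ and then handles intermediate $t$ by replacing $f$ with $f(s,x,u,v)I_{[t,T]}(s)$; your direct treatment is equivalent and slightly cleaner. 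You also correctly identify that the delicate bookkeeping is in checking that $\alpha\mapsto\alpha_\tau$, $\beta\mapsto\beta_\tau$ are bijections of the NAD-strategy classes preserving the conditional-independence structure of the admissible controls --- this is exactly the verification the paper carries out.
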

\begin{remark} The above theorem allows to identify the lower and the upper
value functions along a partition with their deterministic versions:
$V^\Pi(t,x):=E\left[V^{\Pi} (t,x)\right]$ and
$U^\Pi(t,x):=E\left[U^{\Pi}(t,x)\right],\, (t,x)\in[0,T]\times R^d.$
\end{remark}
In view of the symmetry of the arguments we will restrict
the proof to the case of the lower value function along a partition $V^\Pi.$
We consider a partition of the interval $[0,T]$ of the
form $\Pi=\{0=t_{0}<\cdots<t_{n-1}<t_{n}=T\}$ and prove by
backward iteration that the lower value function along a partition $V^{\Pi}$ is
deterministic. For this we note that, for the first step of the
backward iteration, we have the following
\begin{lemma}\label{Theorem 3.1bis}
For the above introduced partition $\Pi$ and with the above notations
we have that
$$
V^{\Pi}(t_{n-1},x)=
\mbox{esssup}_{\alpha\epsilon\mathcal{A}_{t_{n-1},t_{n}}^{\Pi}}
\mbox{essinf}_{\beta\epsilon \mathcal{B}_{t_{n-1},t_{n}}^\Pi}
E[g(X_{t_{n}}^{t_{n-1},x,\alpha,\beta})|\mathcal{F}_{n-1}]
$$
is deterministic, i.e.,
$V^{\Pi}(t_{n-1},x)=E\left[V^{\Pi}(t_{n-1},x)\right]$, $P$-a.s., for
all $x\in R^d.$
\end{lemma}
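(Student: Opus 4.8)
The plan is to reduce the statement to an invariance property of $V^{\Pi}(t_{n-1},x)$ under a rich group of measure-preserving transformations of $\Omega$, following the idea of \cite{Buckdahn-Li-2008}. First I would observe that on the single interval $[t_{n-1},t_n]$ the delay built into Definition~\ref{NAD} trivializes the strategies: arguing exactly as in the proof of Lemma~\ref{lemma 3.1} (where here $[t_{n-1},t_n]$ plays the role of the ``first'' interval, so that the relevant past $[t_{n-1},t_{n-1}]$ is a singleton of Lebesgue measure zero), for $\alpha\in\mathcal{A}^\Pi_{t_{n-1},t_n}$ the control $\alpha(v)$ does not depend on $v$, and symmetrically $\beta(u)$ does not depend on $u$. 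Thus $\alpha\mapsto\alpha(\cdot)$ and $\beta\mapsto\beta(\cdot)$ identify $\mathcal{A}^\Pi_{t_{n-1},t_n},\mathcal{B}^\Pi_{t_{n-1},t_n}$ with $\mathcal{U}^\Pi_{t_{n-1},t_n},\mathcal{V}^\Pi_{t_{n-1},t_n}$, and
$$
V^{\Pi}(t_{n-1},x)=\esssup_{u\in\mathcal{U}^\Pi_{t_{n-1},t_n}}\essinf_{v\in\mathcal{V}^\Pi_{t_{n-1},t_n}}E[g(X_{t_n}^{t_{n-1},x,u,v})\mid\mathcal{F}_{n-1}].
$$
Writing $u=\sum_k I_{A_k}u^{n,k}$ and $v=\sum_m I_{B_m}v^{n,m}$ as in Definition~\ref{adm.control}, with $A_k,B_m\in\mathcal{F}_{n-1}$, $u^{n,k}\in L^0_{\mathcal{G}_n}$, $v^{n,m}\in L^0_{\mathcal{H}_n}$, and using that $\mathcal{G}_n\vee\mathcal{H}_n$ is independent of $\mathcal{F}_{n-1}$, the inner conditional expectation is the step random variable $\sum_{k,m}I_{A_k\cap B_m}\,E[g(X_{t_n}^{t_{n-1},x,u^{n,k},v^{n,m}})]$ with deterministic coefficients.

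The core step is to establish invariance. I would consider the bijections $\tau:\Omega\to\Omega$ that act as an arbitrary measure-preserving bijection on the first $n-1$ coordinates $(\omega_1,\dots,\omega_{n-1})$ and leave $\omega_j$ unchanged for $j\ge n$. Such a $\tau$ preserves $P$, stabilizes $\mathcal{F}_{n-1}$, and fixes $\zeta_n$, hence fixes $\mathcal{G}_n$ and $\mathcal{H}_n$ and fixes each $u^{n,k},v^{n,m}$. Consequently $u\mapsto u\circ\tau=\sum_k I_{\tau^{-1}A_k}u^{n,k}$ is a bijection of $\mathcal{U}^\Pi_{t_{n-1},t_n}$ onto itself (and similarly for $v$); the pathwise nature of \eqref{3.1} gives $X_{t_n}^{t_{n-1},x,u,v}\circ\tau=X_{t_n}^{t_{n-1},x,u\circ\tau,v\circ\tau}$; and measure-preservation with $\tau^{-1}\mathcal{F}_{n-1}=\mathcal{F}_{n-1}$ yields $E[\,\cdot\mid\mathcal{F}_{n-1}]\circ\tau=E[\,\cdot\circ\tau\mid\mathcal{F}_{n-1}]$. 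Since $Z\mapsto Z\circ\tau$ is a lattice isomorphism of $L^0(P)$ it commutes with $\esssup$ and $\essinf$ over families; substituting $u\circ\tau,v\circ\tau$ for $u,v$ (a bijection of each control set) then gives $V^{\Pi}(t_{n-1},x)\circ\tau=V^{\Pi}(t_{n-1},x)$, $P$-a.s.

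It remains to prove the constancy lemma: an $\mathcal{F}_{n-1}$-measurable random variable $Z$ with $Z\circ\tau=Z$ $P$-a.s. for every $\tau$ in the above family is $P$-a.s. constant. Writing $Z=\psi(\zeta_1,\dots,\zeta_{n-1})$, invariance under all measure-preserving bijections of the non-atomic space $([0,1]^2)^{n-1}$ forces $\psi$ to be a.e. constant: if $\{Z>c\}$ had measure in $(0,1)$ for some $c$, one could pick a measure-preserving bijection not stabilizing this set, contradicting invariance. Applying this with $Z=V^{\Pi}(t_{n-1},x)$ for each fixed $x$ yields $V^{\Pi}(t_{n-1},x)=E[V^{\Pi}(t_{n-1},x)]$, $P$-a.s.

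I expect the main obstacle to be the rigorous verification of the invariance identity $V^{\Pi}(t_{n-1},x)\circ\tau=V^{\Pi}(t_{n-1},x)$: one must check that the reindexing $u\mapsto u\circ\tau$ genuinely respects the admissibility structure of Definition~\ref{adm.control}, so that it is a bijection of the control classes, and that composition with $\tau$ passes cleanly through the $\esssup/\essinf$ over these random families. Both rest precisely on $\tau$ being a measure-preserving bijection that stabilizes $\mathcal{F}_{n-1}$ while fixing $\mathcal{G}_n$ and $\mathcal{H}_n$, which is why the transformations must act only on the coordinates carrying $\mathcal{F}_{n-1}$. A secondary technical point is to state the constancy lemma for exactly the chosen family, ensuring it is large enough to annihilate every nonconstant $\mathcal{F}_{n-1}$-measurable random variable; this is where the non-atomicity of $([0,1]^2)^{n-1}$ is used.
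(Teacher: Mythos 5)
Your proposal is correct and follows the same overall architecture as the paper (invariance of $V^{\Pi}(t_{n-1},x)$ under a family of bijections of $\Omega$ fixing the coordinates $\omega_k$, $k\ge n$, followed by a constancy lemma for invariant $\mathcal{F}_{n-1}$-measurable random variables, in the spirit of \cite{Buckdahn-Li-2008}), but the two key ingredients are executed differently. First, your preliminary reduction identifying $\mathcal{A}^\Pi_{t_{n-1},t_n}$ and $\mathcal{B}^\Pi_{t_{n-1},t_n}$ with $\mathcal{U}^\Pi_{t_{n-1},t_n}$ and $\mathcal{V}^\Pi_{t_{n-1},t_n}$ (the delay trivializes strategies on a single partition interval) is a genuine simplification: the paper instead keeps the strategies and verifies that $\alpha\mapsto\alpha_\tau$, $\beta\mapsto\beta_\tau$ with $\alpha_\tau(v)=\alpha(v(\tau^{-1}))(\tau)$ are bijections of the strategy classes. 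Second, and more substantially, your family of transformations consists only of \emph{measure-preserving} bijections of the first $n-1$ coordinates, and your constancy lemma is proved by a level-set argument: a non-constant invariant $Z$ has a level set $A$ of measure in $(0,1)$, and some measure-preserving bijection of the non-atomic space $([0,1]^2)^{n-1}$ moves $A$. The paper instead allows bijections whose law is merely \emph{equivalent} to $P$ and constructs, from the invariant variable $\xi$ itself, an explicit coordinatewise transformation $\varphi(\theta_{i,j},\cdot)$ (a normalized primitive of $\xi+1$), deducing $E[\xi^2]=E[(\int_0^1\xi(\theta_{i,j},s)\,ds)^2]$ and hence $\xi=\int_0^1\xi(\theta_{i,j},s)\,ds$ a.s., coordinate by coordinate. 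The trade-off: the paper's explicit construction is entirely computational and never needs the existence of a pointwise measure-preserving bijection moving a prescribed set (which in your route requires realizing a mod-$0$ isomorphism of standard probability spaces as a genuine Borel bijection — a standard but fussy point you only gesture at); your route, in exchange, keeps all transformations measure-preserving, which makes the commutation with $E[\,\cdot\mid\mathcal{F}_{n-1}]$ and with $\esssup/\essinf$ cleaner than the equivalent-law case the paper must handle. Both arguments are sound; yours would be complete once the realization of the set-moving bijection is spelled out.
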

\begin{proof} A crucial role will be played by the following auxiliary
statement:

\smallskip

\noindent  Let $\tau:\Omega\rightarrow\Omega,\ \  \omega\rightarrow
\tau(\omega)=(\tau(\omega)_k)_{k\ge 1},$ be an arbitrary measurable
bijection which law $P\circ[\tau]^{-1}$ is equivalent to the
underlying probability measure $P$, such that
$\tau'(\omega):=(\tau(\omega)_1,\dots, \tau(\omega)_{n-1}),\,
\omega\in\Omega$, is ${\cal F}_{n-1}-{\cal
B}(([0,1]^2)^{n-1})$-measurable, and $\tau(\omega)_k=\omega_k,\,
k\ge n, \omega\in\Omega.$ Then
$$V^{\Pi}(t_{n-1},x)\circ \tau =V^{\Pi}(t_{n-1},x),\,
P\mbox{-a.s.}$$

\noindent Let us prove this assertion. For this we notice first
that, using the equivalence between $P\circ[\tau]^{-1}$ and $P$ as
well as the bijectivity of $\tau$, we can change the order between
$\mbox{esssup}_{\alpha\in\mathcal{A}_{t_{n-1},t_{n}}^{\Pi}}
\mbox{essinf}_{\beta\in \mathcal{B}_{t_{n-1},t_{n}}^\Pi}$ and the
transformation $\tau$ (The reader interested in details is referred
to the corresponding proof in \cite{Buckdahn-Li-2008}.), i.e., we
have
$$
V^{\Pi}(t_{n-1},x)\circ\tau=\mbox{esssup}_{\alpha\in\mathcal{A}_{t_{n-1},
t_{n}}^{\Pi}}\mbox{essinf}_{\beta\in
\mathcal{B}_{t_{n-1},t_{n}}^\Pi}
\left(E[g(X_{t_{n}}^{t_{n-1},x,\alpha,\beta})|\mathcal{F}_{n-1}]\circ\tau\right),
\ \ P\mbox{-a.s.}
$$
Let us study now the expression
$E[g(X_{t_{n}}^{t_{n-1},x,\alpha,\beta})|\mathcal{F}_{n-1}](\tau),$
occurring in the above formula. For this we recall first that, due
to the definition, for any couple of admissible control processes
$(u, v)\in\mathcal{U}_{t_{n-1},t_{n}}^{\Pi}
\times\mathcal{V}_{t_{n-1},t_{n}}^{\Pi},$\ there exists an $(\Omega, \mathcal{F}_{n-1})$-partition $(\Gamma_{j})_{j\geq 1}$
and an associated sequence of couples of control processes $(u_{j},
v_{j})\in L_{\mathcal{G}_{n}}^{0} (t_{n-1},t_{n}; {U})\times
L_{\mathcal{H}_{n}}^{0} (t_{n-1},t_{n}; {V}),\ \ j\geq 1,$
such that $(u, v)=\sum_{j\geq 1}I_{\Gamma_{j}}(u^{j}, v^{j}).$ Since
$\Gamma_j\in{\cal F}_{n-1},$ we can find a Borel function $f_j$ with
$f_j(\zeta_1,\dots,\zeta_{n-1})=I_{\Gamma_j},\ j\ge 1.$ Then the
relation
$$I_{\tau^{-1}(\Gamma_j)}(\omega)=f_j(\tau'(\omega)),\,
\omega\in \Omega,$$

\smallskip

\noindent  proves that $\tau^{-1}(\Gamma_j)\in{\cal F}_{n-1},\, j\ge
1.$ Hence, taking into account that the mapping
$\tau:\Omega\rightarrow\Omega$ is bijective and
$\tau(\omega)_n=\omega_n,\ \omega\in\Omega$, we see that also
$(\tau^{-1}(\Gamma_{j}))_{j\geq 1}$ forms an $(\Omega,
\mathcal{F}_{n-1})$-partition, and  \be\label{3.6} (u(\tau),
v(\tau))=\sum_{j\geq 1}I_{\Gamma_{j}}(\tau)\cdot(u^{j},
v^{j})=\sum_{j\geq 1}I_{\tau^{-1}(\Gamma_{j})}\cdot(u^{j},
v^{j})\in\mathcal{U}_{t_{n-1}, t_{n}}^{\Pi}\times
\mathcal{V}_{t_{n-1}, t_{n}}^{\Pi}. \ee (Recall that $u^{j}$ is
${\cal G}_n$-measurable and, hence, a measurable function of
$\zeta_{n,1}$, while $v^{j}$ is is ${\cal H}_n$-measurable and,
thus, a measurable function of $\zeta_{n,2}$.)

\noindent On the other hand, a straight-forward application of the
transformation $\tau:\Omega\rightarrow\Omega$ to equation
(\ref{3.1}) yields
$$X_{t_{2}}^{t_{1},x,u,v}(\tau)=X_{t_{2}}^{t_{1},x,u(\tau),v(\tau)}.$$
Indeed, the only random processes in the equation (\ref{3.1}) of the dynamics
are the control processes $u$ and $v$.

Let us now consider an arbitrary couple of nonanticipative
strategies $(\alpha,\beta)\in\mathcal{A}_{t_{n-1},t_{n}}^{\Pi}\times
\mathcal{B}_{t_{n-1},t_{n}}^{\Pi}$ with which we associate the
mappings $\alpha_\tau: \mathcal{V}_{t_{n-1},t_{n}}^{\Pi}\rightarrow
\mathcal{U}_{t_{n-1},t_{n}}^{\Pi}$ and $\beta_\tau:
\mathcal{U}_{t_{n-1},t_{n}}^{\Pi}\rightarrow \mathcal{V}_{t_{n-1},
t_{n}}^{\Pi}$ defined as follows:
$$\alpha_{\tau}(v)=\alpha(v(\tau^{-1}))(\tau),\ \beta_{\tau}(u)=
\beta(u(\tau^{-1}))(\tau),$$ for
$u\in\mathcal{U}_{t_{n-1},t_{n}}^{\Pi}, v\in\mathcal{V}_{t_{n-1},
t_{n}}^{\Pi}.$ It can be easily checked that such defined
mappings are themselves again nonanticipative strategies:
$\alpha_\tau\in\mathcal{A}_{t_{n-1}, t_{n}}^{\Pi},\, \beta\in
\mathcal{B}_{t_{n-1},t_{n}}^{\Pi}$. Moreover, from the bijectivity
of $\tau$ it can be easily deduced that
$$\{\alpha_{\tau}|\alpha\in\mathcal{A}_{t_{n-1}
,t_{n}}^{\Pi}\}=\mathcal{A}_{t_{n-1}, t_{n}}^{\Pi},\
\{\beta_{\tau}|\beta\in\mathcal{B}_{t_{n-1},t_{n}}^{\Pi}\}=\mathcal{B}_{t_{n-1},
t_{n}}^{\Pi}.$$

\medskip

Given an arbitrary couple of nonanticipative strategies
$(\alpha,\beta)\in\mathcal{A}_{t_{n-1},t_{n}}^{\Pi}\times
\mathcal{B}_{t_{n-1},t_{n}}^{\Pi}$ we consider the couple of
admissible controls $(u, v)\in\mathcal{U}_{t_{n-1},t_{n}}^{\Pi}
\times\mathcal{V}_{t_{n-1},t_{n}}^{\Pi},$ associated with by the
relations $\alpha(v)=u,\, \beta(u)=v$. Since $\tau'$ is ${\cal
F}_{n-1}$-measurable and $\tau(\omega)_n=\omega_n,\
\omega\in\Omega,$ we obtain
$$\begin{array}{lll}
&\displaystyle E[g(X_{t_{n}}^{t_{n-1}, x, \alpha,
\beta})|\mathcal{F}_{n-1}]\circ\tau=E[g(X_{t_{n}}^{t_{n-1}, x, u,
v})|\mathcal{F}_{n-1}]\circ \tau=E[g(X_{t_{n}}^{t_{n-1}, x, u,
v}\circ
 \tau)|\mathcal{F}_{n-1}]\\
&\displaystyle =E[g(X_{t_{n}}^{t_{n-1}, x, u(\tau),
v(\tau)})|\mathcal{F}_{n-1}].\end{array}$$

\noindent On the other hand, we observe that, due to the definition
of the strategies $\alpha_\tau$ and $\beta_\tau$ we have
$$u=\alpha(v)=\alpha(v(\tau)\circ\tau^{-1}),\ \mbox{i.e.,}\ u(\tau)=\alpha(v(\tau)\circ\tau^{-1})(\tau)=\alpha_\tau(v(\tau)),$$

\noindent and the symmetric argument yields
$v(\tau)=\beta_\tau(u(\tau))$. Consequently, the unique couple of
admissible controls associated with $(\alpha_\tau,\beta_\tau)$ is
$(u(\tau),v(\tau))$, and we can conclude that
$$E[g(X_{t_{n}}^{t_{n-1}, x, \alpha,
\beta})|\mathcal{F}_{n-1}](\tau)=E[g(X_{t_{n}}^{t_{n-1}, x,
\alpha_{\tau}, \beta_{\tau}})|\mathcal{F}_{n-1}].$$

\noindent Using this together with the fact that
$$\{\alpha_{\tau}|\alpha\in\mathcal{A}_{t_{n-1}
,t_{n}}^{\Pi}\}=\mathcal{A}_{t_{n-1}, t_{n}}^{\Pi},\
\{\beta_{\tau}|\beta\in\mathcal{B}_{t_{n-1},t_{n}}^{\Pi}\}=
\mathcal{B}_{t_{n-1},t_{n}}^{\Pi},$$

\noindent we obtain
$$\begin{array}{lll}
&\displaystyle V^{\Pi}(t_{n-1},x)\circ\tau=\mbox{esssup}_{\alpha\in\mathcal{A}_{t_{n-1},t_{n}}^{\Pi}} \mbox{essinf}_{\beta\in
\mathcal{B}_{t_{n-1},t_{n}}^\Pi} \left(E[g(X_{t_{n}}^{t_{n-1},x,
\alpha,\beta})|\mathcal{F}_{n-1}]\circ\tau\right)\\
&\displaystyle =\mbox{esssup}_{\alpha\in\mathcal{A}_{t_{n-1},t_{n}}^{\Pi}} \mbox{essinf}_{\beta\in
\mathcal{B}_{t_{n-1},t_{n}}^\Pi} E[g(X_{t_{n}}^{t_{n-1}, x,
\alpha_{\tau}, \beta_{\tau}})|\mathcal{F}_{n-1}]\\
&\displaystyle =\mbox{esssup}_{\alpha\in\mathcal{A}_{t_{n-1},t_{n}}^{\Pi}} \mbox{essinf}_{\beta\in
\mathcal{B}_{t_{n-1},t_{n}}^\Pi} E[g(X_{t_{n}}^{t_{n-1}, x, \alpha,
\beta})|\mathcal{F}_{n-1}]\\
&\displaystyle =V^{\Pi}(t_{n-1},x).\\
\end{array}
 $$
\noindent Hence, $V^{\pi}(t_1, x)\circ \tau=V^{\pi}(t_1, x)$,
P-a.s., and the proof of Lemma 3.3 will be completed by the
following result.
\end{proof}
\bl \label{lemma 3.3} Let $\xi\in L^1(\Omega,{\cal F}_{n-1},P)$ be a
random variable which is invariant with respect to all measurable
bijection $\tau:\Omega\rightarrow\Omega$ which law
$P\circ[\tau]^{-1}$ is equivalent to the underlying probability
measure $P$, such that $\tau'(\omega):=(\tau(\omega)_1,\dots,
\tau(\omega)_{n-1}),\, \omega\in\Omega$, is ${\cal F}_{n-1}-{\cal
B}(([0,1]^2)^{n-1})$-measurable, and $\tau(\omega)_k=\omega_k,\,
k\ge n, \omega\in\Omega.$ Then $\xi$ is almost surely constant,
i.e., $\xi=E[\xi].$
\end{lemma}
\begin{proof}
We begin with noting that it suffices to prove this lemma under the
additional assumption that $\xi$ is nonnegative. Otherwise, we can always
decompose $\xi$ as a difference of its positive and its negative part, and
observing that both parts are invariant with respect to $\tau$ on their turn
we can make the proof for them separately.

Given $1\le i\le n-1,\, j=1,2,$ let us denote by $\theta_{i,j}$ the
vector of all coordinate mappings
$(\zeta_{1,1},\zeta_{1,2},\zeta_{2,1},\allowbreak \zeta_{2,2},
\dots)$ but without the component $\zeta_{i,j}$. Then, putting
$\zeta(\omega):=\omega,\, \omega\in\Omega,$ we can identify
$\zeta\equiv(\theta_{i,j},\zeta_{i,j}),$ and with this
identification we can write
$\xi(\omega)=\xi(\theta_{i,j}(\omega),\zeta_{i,j}(\omega)),\,
\omega\in\Omega.$

\smallskip

Recalling that $\xi\ge 0,$ let us now introduce the following ${\cal F}_{n-1}
$-measurable mapping
$\varphi:\Omega\rightarrow[0,1]:$
$$\varphi(\omega)=\varphi(\theta_{i,j}(\omega),\zeta_{i,j}(\omega))=
\frac{\int_0^{\zeta_{i,j}(\omega)}\left(\xi(\theta_{i,j}(\omega),s)+1\right)ds}
{\int_0^1\xi(\theta_{i,j}(\omega),s)ds+1},\,\, \omega\in\Omega.$$
Obviously, $\varphi(\theta_{i,j}(\omega),.):[0,1]\rightarrow[0,1]$
is a continuous, strictly increasing bijection which derivative is
$$\frac{\partial}{\partial s}\varphi(\theta_{i,j}(\omega),s)=
\frac{\xi(\theta_{i,j}(\omega),s)+1}
{\int_0^1\xi(\theta_{i,j}(\omega),r)dr+1},\ s\in[0,1],\ \omega\in\Omega.$$
We now put
$$\tau(\omega):=(\theta_{i,j}(\omega),\varphi(\theta_{i,j}(\omega),
\zeta_{i,j}(\omega))),\, \, \omega\in\Omega.$$
Such defined mapping $\tau:\Omega\rightarrow\Omega$
satisfies the assumptions of the lemma. Indeed, due to the
definition $\tau$ is a bijection, $\tau(\omega)_k=\omega_k,\ k\ge
n,\ \omega\in\Omega$, and $\tau'$ is ${\cal F}_{n-1}$-measurable.
Moreover, the law $P\circ[\tau]^{-1}$ is equivalent to the
underlying probability measure $P$. Indeed, for any nonnegative
random variable $\eta$ over $(\Omega,{\cal F},P)$ we have
$$\begin{array}{lll}&\displaystyle E\left[\eta(\tau)\frac{\partial}{\partial s}\varphi(\theta_{i,j},
\zeta_{i,j})\right]\displaystyle =E\left[\int_0^1\eta(\theta_{i,j},\varphi(\theta_{i,j},s))
\frac{\partial}{\partial s}\varphi(\theta_{i,j},s)ds\right]\\
&\displaystyle
=E\left[\int_0^1\eta(\theta_{i,j},s)ds\right]=E[\eta],\\
\end{array}$$
\noindent where $\displaystyle \frac{\partial}{\partial s}\varphi(\theta_{i,j},s)>0,$
for all $s\in[0,1].$ Consequently, we know from our assumption that the random
variable $\xi$ is invariant under the transformation $\tau$, and, thus, observing
that $\displaystyle\int_0^1\xi(\theta_{i,j}(\omega),s)ds$ does not depend on
$\zeta_{i,j}(\omega)$, we have
$$\begin{array}{lll}&\displaystyle E[\xi^2]+E[\xi]=E[\xi(\xi+1)]\\
&\displaystyle =E[\xi(\tau)(\xi+1)]=E\left[\xi(\tau)\frac{\partial}{\partial s}
\varphi(\theta_{i,j},\zeta_{i,j})\left(\int_0^1\xi(\theta_{i,j},s)ds+1\right)\right]\\
&\displaystyle
=E\left[\xi\left(\int_0^1\xi(\theta_{i,j},s)ds+1\right)\right]
=E\left[\xi\int_0^1\xi(\theta_{i,j},s)ds\right]+E[\xi]\\
&\displaystyle =E\left[\left(\int_0^1\xi(\theta_{i,j},s)ds\right)^2\right]+E[\xi].\\
\end{array}$$

\noindent Consequently,
$$E\left[\int_0^1\xi(\theta_{i,j},s)^2ds\right]=E[\xi^2]=E\left[\left(\int_0^1
\xi(\theta_{i,j},s)ds\right)^2\right],$$
from where we see that
$$E\left[\int_0^1\left(\xi(\theta_{i,j},s)-\int_0^1\xi(\theta_{i,j},s)ds\right)^2
ds\right]=0.$$
It follows that
$$\displaystyle \xi=\int_0^1\xi(\theta_{i,j},s)ds,\ \mbox{P-a.s.}, 1\le i\le n-1,\ j=1,2.$$

\noindent Therefore, taking into account that $\xi$ is ${\cal F}_{n-1}$-measurable
and iterating the above result, we get
$$\begin{array}{lll}&\displaystyle \xi=\int_0^1\xi(\theta_{1,1},s_{1,1})ds_{1,1} =
\int_0^1\left(\int_0^1\xi(\theta_{1,2},s_{1,2})ds_{1,2}\right)(\theta_{1,1},
s_{1,1})ds_{1,1}\\
&\displaystyle =\int_{[0,1]^2}\xi(s_1,(\zeta_{2,1},\zeta_{2,2},\dots,\zeta_{n-1,1},
\zeta_{n-1,2})ds_1=\dots =\int_{[0,1]^{2(n-1)}}\xi(s)ds,\ \mbox{P-a.s.}
\end{array}$$

\noindent The proof of the lemma is complete now.
\end{proof}
By iterating the argument developed in the both preceding lemmas, we
can prove now Theorem \ref{th3.1}.
\begin{proof} From the both preceding lemmas we see that together
with $V^\Pi(t_n,.):=g(.)$ also the function $V^\Pi(t_{n-1},.)$ is
deterministic. On the other hand, from the DPP satisfied by $V^\Pi$ we obtain
$$\displaystyle V^\Pi(t_{n-2},x)=
\mbox{esssup}_{\alpha\epsilon\mathcal{A}_{t_{n-2},t_{n-1}}^{\Pi}}
\mbox{essinf}_{\beta\epsilon \mathcal{B}_{t_{n-2},t_{n-1}}^\Pi}
E[V^\Pi(t_{n-1},X_{t_{n-1}}^{t_{n-2},x,\alpha,\beta})|\mathcal{F}_{n-2}],\
\mbox{P-a.s.},$$

\noindent for all $x\in R^d.$ Hence, applying the argument of the
both preceding lemmas again, but now for the deterministic function
$V^\Pi(t_{n-1},.)$ instead of $g$ (recall that due to Lemma
\ref{estimates-X} also the function $V^\Pi(t_{n-1},.)$ is bounded
and Lipschitz), we conclude that also the function
$V^\Pi(t_{n-2},.)$ is deterministic. Iterating this argument, we see
that all $V^\Pi(t_l,.)\, (0\le l\le n)$ are deterministic. This
implies that $V^\Pi$ is a deterministic function. Indeed, let
$t_i\le t<t_{i+1}$. For the conclusion that the non-randomness of
$V^\Pi(t_{i+1},.)$ involves that of $V^\Pi(t,.)$, it suffices to
replace the driving coefficient $f(s,x,u,v)$ of the controlled
dynamics by $f(s,x,u,v)I_{[t,T]}(s).$ This substitution doesn't
change the values of $V^\Pi(t_l,x),\, (i+1\le l\le n)$ and
$V^\Pi(t,x),\, x\in R^d,$ but now $V^\Pi(t,x)$ coincides with the
deterministic function $V^\Pi(t_i,.)$ associated with the driver
$f(s,x,u,v)I_{[t,T]}(s).$ The proof of Theorem \ref{th3.1} is
complete.
\end{proof}
The both preceding major results, the DPP as well as the statement of non-randomness yield the following
important characterization of the lower and the upper value
functions along a partiton.
\begin{theorem}\label{Remark3.3}
For all partition $\Pi$ of the time interval $[0,T]$, and all
$(t,x)\in[0,T]\times R^d,$ we have

\be\label{3.25}\begin{array}{lll} V^{\Pi}(t,x) &=&\sup_{\alpha\in
\mathcal{A}^\Pi_{t,T}} \inf_{\beta\in \mathcal{B}^\Pi_{t,T}}
E[g(X_{T}^{t,x,\alpha,\beta})],\\
U^{\Pi}(t,x) &=& \inf_{\beta\in \mathcal{B}^\Pi_{t,T}}
\sup_{\alpha\in \mathcal{A}^\Pi_{t,T}}
E[g(X_{T}^{t,x,\alpha,\beta})].
\end{array}\ee
\end{theorem}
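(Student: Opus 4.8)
The plan is to exploit the fact, already established in Theorem \ref{th3.1}, that $V^\Pi(t,x)$ is deterministic, so that $V^\Pi(t,x)=E[V^\Pi(t,x)]$, and then to interchange this expectation with the essential supremum and essential infimum in the defining formula (\ref{3.2}), thereby converting them into an ordinary supremum and infimum of \emph{unconditional} expectations. Fix $(t,x)$ with $t_i\le t<t_{i+1}$ and abbreviate $R^{\alpha,\beta}:=E[g(X_T^{t,x,\alpha,\beta})\mid\mathcal{F}_i]$ and $J(\alpha):=\essinf_{\beta\in\mathcal{B}^\Pi_{t,T}}R^{\alpha,\beta}$, so that $V^\Pi(t,x)=\esssup_{\alpha\in\mathcal{A}^\Pi_{t,T}}J(\alpha)$. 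Since $E[R^{\alpha,\beta}]=E[g(X_T^{t,x,\alpha,\beta})]$ by the tower property, the asserted formula for $V^\Pi$ will follow once I establish the two interchange identities
\be\label{interchange}
E\big[\essinf_{\beta}R^{\alpha,\beta}\big]=\inf_\beta E[R^{\alpha,\beta}]\quad\mbox{and}\quad E\big[\esssup_{\alpha}J(\alpha)\big]=\sup_\alpha E[J(\alpha)].
\ee

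These are the standard measure-theoretic interchange rules, valid as soon as the underlying families enjoy the appropriate lattice (directedness) property, and the heart of the proof is verifying that property here. For each fixed $\alpha$ the family $\{R^{\alpha,\beta}:\beta\in\mathcal{B}^\Pi_{t,T}\}$ is directed downward: given $\beta_1,\beta_2$, the event $A:=\{R^{\alpha,\beta_1}\le R^{\alpha,\beta_2}\}$ lies in $\mathcal{F}_i$, and I would show that the pasted map $\beta_3:=I_A\beta_1+I_{A^c}\beta_2$ is again an element of $\mathcal{B}^\Pi_{t,T}$. This relies on the specific structure of our admissible controls: since $\mathcal{F}_i\subseteq\mathcal{F}_{j-1}$ for every $j\ge i+1$, switching along an $\mathcal{F}_i$-measurable set is compatible with the $(\Omega,\mathcal{F}_{j-1})$-partition form demanded in Definition \ref{adm.control} and preserves the delay property of Definition \ref{NAD}; the same type of construction already appeared in Step 1 of the proof of Lemma \ref{lemma 1a}. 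Invoking uniqueness in Lemma \ref{lemma 3.1}, the couple of controls associated with $(\alpha,\beta_3)$ coincides with that of $(\alpha,\beta_1)$ on $A$ and with that of $(\alpha,\beta_2)$ on $A^c$, whence $R^{\alpha,\beta_3}=R^{\alpha,\beta_1}\wedge R^{\alpha,\beta_2}$. A symmetric pasting argument shows that $\{J(\alpha):\alpha\in\mathcal{A}^\Pi_{t,T}\}$ is directed upward.

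With directedness in hand the conclusion is routine. Downward directedness produces a sequence $(\beta_m)$ with $R^{\alpha,\beta_m}\downarrow J(\alpha)$, $P$-a.s.; as $g$ is bounded, dominated convergence gives $E[R^{\alpha,\beta_m}]\to E[J(\alpha)]$, so $\inf_\beta E[R^{\alpha,\beta}]\le E[J(\alpha)]$, while $J(\alpha)\le R^{\alpha,\beta}$ forces the reverse inequality; this is the first identity in (\ref{interchange}). Upward directedness of $\{J(\alpha)\}$ yields the second one in the same manner. Chaining everything,
\be\label{chain}
V^\Pi(t,x)=E[V^\Pi(t,x)]=E\big[\esssup_{\alpha}J(\alpha)\big]=\sup_\alpha E[J(\alpha)]=\sup_\alpha\inf_\beta E[g(X_T^{t,x,\alpha,\beta})],
\ee
which is the claimed representation; the formula for $U^\Pi$ follows by the symmetric argument, exchanging the roles of the two players.

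The genuinely delicate step — and the only one that is not bookkeeping — is verifying that the pasted strategy $\beta_3$ really belongs to $\mathcal{B}^\Pi_{t,T}$ and that the associated state trajectory splits as $X_T^{t,x,\alpha,\beta_3}=I_A X_T^{t,x,\alpha,\beta_1}+I_{A^c}X_T^{t,x,\alpha,\beta_2}$. Both hinge on the compatibility of $\mathcal{F}_i$-measurable switching with, simultaneously, the conditional-independence/partition design of the admissible controls (Definition \ref{adm.control}) and the nonanticipativity-with-delay requirement (Definition \ref{NAD}); this is precisely the feature of the present framework that makes the interchange legitimate. Once this stability under $\mathcal{F}_i$-measurable pasting is secured, the remaining arguments are the standard essential-supremum/infimum manipulations.
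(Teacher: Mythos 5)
Your proof is correct, but it organizes the argument differently from the paper. The paper does not prove the interchange identities $E[\esssup]=\sup E$ and $E[\essinf]=\inf E$ as such; instead it recycles the $\varepsilon$-optimizers already manufactured inside the proof of the dynamic programming principle: inequality (\ref{3.21}) supplies a single $\alpha^\varepsilon\in\mathcal{A}^\Pi_{t,T}$ with $V^\Pi(t,x)\le E[g(X_T^{t,x,\alpha^\varepsilon,\beta})\mid\mathcal{F}_i]+C\varepsilon$ for \emph{all} $\beta$, and (\ref{3.27}) supplies, for each $\alpha$, a $\beta^{\alpha,\varepsilon}$ with the reverse $\varepsilon$-inequality; taking plain expectations (legitimate because Theorem \ref{th3.1} makes $V^\Pi(t,x)$ deterministic) and then $\sup_\alpha\inf_\beta$ yields (\ref{3.25}). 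Your route replaces this by the classical lattice argument: two-element pasting along the $\mathcal{F}_i$-set $A=\{R^{\alpha,\beta_1}\le R^{\alpha,\beta_2}\}$ gives downward (resp.\ upward) directedness, hence monotone approximating sequences, hence the interchange of $E$ with $\essinf$ and $\esssup$ by dominated convergence. The two arguments rest on exactly the same structural facts --- determinism of $V^\Pi$ and stability of the strategy classes under $\mathcal{F}_i$-measurable pasting (your $A\in\mathcal{F}_i\subseteq\mathcal{F}_{j-1}$ observation is the same one the paper invokes when it forms $\alpha_1^\varepsilon=\sum_k I_{\Gamma_k}\alpha_k$ in Step 1 of Lemma \ref{lemma 1a}, and your localization $R^{\alpha,\beta_3}=I_AR^{\alpha,\beta_1}+I_{A^c}R^{\alpha,\beta_2}$ follows rigorously from the stopping-time formulation of Definition \ref{NAD} with $\tau=(i+j-1)I_A+iI_{A^c}$, plus uniqueness in Lemma \ref{lemma 3.1}) --- so your treatment is at the same level of rigor as the paper's ``standard arguments.'' What your version buys is self-containedness: you need only the one-step pasting and Theorem \ref{th3.1}, not the full apparatus of the DPP proof (the $\oplus$-concatenation across $[t,t_l]$ and $[t_l,T]$ and the Borel partition $(\mathcal{O}_j)$ of $R^d$). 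What the paper's version buys is economy: having already paid for the DPP, the representation drops out in a few lines without proving a separate interchange lemma.
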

\begin{proof}
Let $\Pi=\{0=t_0<\dots<t_n=T\}$, $t_i\le t<t_{i+1}\, (0\le i\le
n-1)$ and $x\in R^d$. Moreover, fix an arbitrary $\varepsilon>0.$
Then, due to (\ref{3.21}) from the the proof of the DPP we know that
there exists $\alpha^\varepsilon\in{\cal A}^\Pi_{t,T}$ such that,
for all $\beta \in \mathcal{B}^\Pi_{t,T}$,

\be\label{3.29}{V}^{\Pi}(t,x)(=\widetilde{V}^{\Pi}_l(t,x))\leq
E[g(X_T^{t,x,\alpha^\varepsilon,\beta})\mid
\mathcal{F}_i]+\varepsilon,\, P\mbox{-a.s.},\ee

\noindent  and from (\ref{3.27}) we get for all $\alpha\in \mathcal
{A}_{t,T}^{\Pi}$ the existence of $\beta^{\alpha,\varepsilon}\in
\mathcal{B}^\Pi_{t,T}$ such that

\be\label{3.30} {V}^{\Pi}(t,x)(=\widetilde{V}^{\Pi}_l(t,x))\geq
E[g(X_{T}^{t,x,\alpha,\beta^{\alpha,\varepsilon}})|\mathcal
{F}_{i}]-\varepsilon,\, P\mbox{-a.s.}\ee

\noindent Consequently, considering that the function $V^\Pi$ is
deterministic and taking the expectation on both sides of
(\ref{3.29}) and (\ref{3.30}), we get
$$E[g(X_{T}^{t,x,\alpha,\beta^{\alpha,\varepsilon}})]-\varepsilon
\le {V}^{\Pi}(t,x)\le E[g(X_T^{t,x,\alpha^\varepsilon,\beta})\mid
\mathcal{F}_i]+\varepsilon,$$

\noindent for all $(\alpha,\beta)\in \mathcal {A}_{t,T}^{\Pi}\times
\mathcal{B}^\Pi_{t,T}$. Thus, taking into account the arbitrariness
of $\varepsilon>0$, the statement for $V^\Pi$ follows directly, and
that for $U^\Pi$ can be verified analogously. The proof is complete.
\end{proof}

We observe that the latter Theorem3.3 combined with (\ref{1.1}) provides directly the following statement:

\bl\label{lemma 3.4} There is some real constant $L$, only depending
on the bound of $f$ and the Lipschitz constants of $f(s,.,u,v)$ and
of $g$, such that, for all partition $\Pi$ of the interval $[0,T]$
and $(t,x),(t',x')\in[0,T]\times R^d$,
\be\label{3.32}|V^{\Pi}(t,x)-V^{\Pi}(t',x')|+|U^{\Pi}(t,x)-U^{\Pi}(t',x')|\leq
L(|t-t'|+|x-x'|).\ee \el

\section{Value in mixed strategies and associated Hamilton-Jacobi-Isaacs
equation}

The objective of this section is to show that the lower and the upper
value functions along a partition $V^\Pi$, $U^\Pi$ converge, as the maximal distance
$|\Pi_n|$ between two neighbouring points of $\Pi_n$ tends to zero
as $n\rightarrow +\infty,$ and that their common limit function $V$
is the viscosity solution of the Hamilton-Jacobi-Isaacs equation

\be\label{3.33} \left \{\begin{array}{rll}
\displaystyle\frac{\partial}{\partial t}W(t,x)+\sup_{\mu\in\Delta U}
\inf_{\nu\in\Delta V}\left(\widetilde{f}(t,x,\mu,\nu)\nabla W(t,x)
\right)&=&0;\\
W(T,x)&=&g(x),
\end{array}\right.
\ee

\noindent where
$$\displaystyle\widetilde{f}(x,\mu,\nu):=\int_{U}\int_{V}
f(x,u,v)\mu(du)\nu(dv),\ \mu\in\Delta U,\ \nu\in\Delta V.$$

\noindent More precisely, our main result of this section is the
following
\begin{theorem}\label{main result}
Under our standard assumptions on the coefficients $f$ and $g$, the
above Hamilton-Jacobi-Isaacs equation (\ref{3.33}) possesses in the
class of bounded continuous functions a unique viscosity solution
$V$. Moreover, for any sequence of partitions $\Pi_n,\, n\ge 1,$ of
the interval $[0,T]$ with $|\Pi_n|\rightarrow 0$ as $n\rightarrow
+\infty,$ both the sequence of the lower value functions along a partition $V^{\Pi_n}$
as well as that of the upper value functions along a partition $U^{\Pi_n},\, n\ge 1,$
converge uniformly on compacts to the function $V$.
\end{theorem}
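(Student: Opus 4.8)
The plan is to use the uniform Lipschitz estimate of Lemma \ref{lemma 3.4} to extract convergent subsequences of $\{V^{\Pi_n}\}$ and $\{U^{\Pi_n}\}$, to identify each accumulation point as a viscosity solution of (\ref{3.33}) by passing to the limit in the dynamic programming principle of Theorem \ref{th3.2}, and finally to invoke a comparison principle to pin down the limit uniquely. I would begin with uniqueness. Writing $H(t,x,p)=\sup_{\mu\in\Delta U}\inf_{\nu\in\Delta V}\widetilde{f}(t,x,\mu,\nu)p$, one checks that $H$ inherits from $f$ the estimates $|H(t,x,p)-H(t,x,q)|\le \norm{f}_\infty|p-q|$ and $|H(t,x,p)-H(t,y,p)|\le C|x-y|\,|p|$. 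These are exactly the structural hypotheses under which the classical comparison principle for first-order Hamilton--Jacobi equations holds in the class of bounded uniformly continuous functions; consequently (\ref{3.33}) admits at most one bounded continuous viscosity solution. Existence will be obtained as a by-product of the convergence.

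By Lemma \ref{lemma 3.4} the family $\{V^{\Pi_n}\}$ is uniformly bounded (by $\sup|g|$) and uniformly Lipschitz in $(t,x)$, with constants independent of $n$; the Arzel\`a--Ascoli theorem therefore provides, along a subsequence, uniform convergence on compact sets to a bounded Lipschitz limit $\overline{V}$, and likewise for $\{U^{\Pi_n}\}$. Note that $\overline{V}(T,\cdot)=g$, since $V^{\Pi_n}(T,\cdot)=g$ for every $n$.

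The central step is to show that such a limit $\overline{V}$ solves (\ref{3.33}) in the viscosity sense. I would argue with test functions. Let $\phi\in C^1$ be such that $\overline{V}-\phi$ has a strict local maximum at some $(t_0,x_0)$ with $t_0<T$; by uniform convergence $V^{\Pi_n}-\phi$ then has a local maximum at points $(t_n,x_n)\to(t_0,x_0)$ with $V^{\Pi_n}(t_n,x_n)\to\overline{V}(t_0,x_0)$. Denote by $t_n^+$ the successor of $t_n$ in $\Pi_n$, so $h_n:=t_n^+-t_n\le|\Pi_n|\to 0$. The key structural fact, already exploited in Lemma \ref{lemma 3.1}, is that over a single mesh interval every NAD-strategy is blind to the opponent's control; hence, at the deterministic point $(t_n,x_n)$ and using that $V^{\Pi_n}$ is deterministic (Theorem \ref{th3.1}), the one-step form of Theorem \ref{th3.2} reduces to a sup--inf over admissible randomized controls,
$$V^{\Pi_n}(t_n,x_n)=\sup_{u}\inf_{v}E[V^{\Pi_n}(t_n^+,X_{t_n^+}^{t_n,x_n,u,v})].$$
Since $|X_{t_n^+}^{t_n,x_n,u,v}-x_n|\le\norm{f}_\infty h_n$ by (\ref{3.1}), for large $n$ the point $(t_n^+,X_{t_n^+})$ lies in the neighbourhood where $V^{\Pi_n}(\cdot)\le\phi(\cdot)+\big(V^{\Pi_n}(t_n,x_n)-\phi(t_n,x_n)\big)$. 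Inserting this into the one-step relation and using the order-preserving property of the operator $\Phi\mapsto\sup_u\inf_vE[\Phi(X_{t_n^+})]$ yields
$$\phi(t_n,x_n)\le \sup_{u}\inf_{v}E[\phi(t_n^+,X_{t_n^+}^{t_n,x_n,u,v})].$$
A first-order Taylor expansion of $\phi$ along the dynamics (\ref{3.1}) now converts the right-hand side into $\phi(t_n,x_n)$ plus $h_n$ times an averaged drift term. Here the conditional independence of the two players' controls on $[t_n,t_n^+]$ is decisive: conditioning on $\mathcal{F}_i$, the control of Player 1 has a conditional law $\mu\in\Delta U$ and that of Player 2 an independent conditional law $\nu\in\Delta V$, so the averaged drift is precisely $\widetilde{f}(t,x,\mu,\nu)=\int_U\int_V f\,d\mu\,d\nu$. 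Dividing by $h_n$, letting $n\to\infty$, and using a measurable-selection argument to replace the sup--inf over randomized controls by the pointwise $\sup_{\mu}\inf_{\nu}$, the right-hand side converges to $\partial_t\phi(t_0,x_0)+H(t_0,x_0,\nabla\phi(t_0,x_0))$, which gives the subsolution inequality $\partial_t\phi+H\ge0$. The supersolution inequality follows from the identical computation at a local minimum, again using only the monotonicity of the one-step operator.

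By uniqueness, every subsequential limit of $\{V^{\Pi_n}\}$ coincides with the unique viscosity solution $V$ of (\ref{3.33}); hence the whole sequence converges to $V$, uniformly on compacts. Running the same argument for $U^{\Pi_n}=\inf_\beta\sup_\alpha$ produces the Hamiltonian $\inf_{\nu\in\Delta V}\sup_{\mu\in\Delta U}\widetilde{f}(t,x,\mu,\nu)p$, which by the classical minimax theorem (the sets $\Delta U,\Delta V$ are convex and weak-$*$ compact and $\widetilde{f}\,p$ is bilinear) equals $H$; thus $U^{\Pi_n}$ converges to the same $V$, and $V=U$. The main obstacle I anticipate is precisely the identification step: one must justify, uniformly in $n$, both the reduction of the one-step NAD game to a sup--inf over conditionally independent randomized controls and the passage from that sup--inf to the mixed Hamiltonian $\sup_\mu\inf_\nu\widetilde{f}\,p$. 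It is exactly the built-in conditional independence of the admissible controls that forces the product measure $\mu\otimes\nu$, rather than a correlated joint law, to appear, and hence produces $\widetilde{f}$; the accompanying technical burden is the uniform control of the Taylor and modulus-of-continuity error terms as $h_n\to0$.
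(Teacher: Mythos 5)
Your proposal is correct in substance and shares the skeleton of the paper's argument---equi-Lipschitz bounds from Lemma \ref{lemma 3.4} plus Arzel\`a--Ascoli, identification of every subsequential limit as a viscosity solution of (\ref{3.33}), the minimax theorem on the compact convex sets $\Delta U\times\Delta V$ to reconcile the two Hamiltonians, and a comparison principle to force $V=U$ and full-sequence convergence---but you execute the identification step by a genuinely different route. The paper keeps the touching point $(t,x)$ of $V-\varphi$ fixed, applies the DPP of Theorem \ref{th3.2} over an interval $[t,t_l^n]$ of length comparable to a fixed $\varepsilon$ (hence spanning many mesh intervals, so the full NAD-strategy machinery and the $\varepsilon$-optimal strategy constructions are genuinely needed), absorbs the uniform-convergence error by taking $\rho=\varepsilon^2$ and dividing by $t_l^n-t\ge\varepsilon$, and proves the supersolution property by contradiction via a near-optimal $\mu^*$. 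You instead pass to local maxima $(t_n,x_n)$ of $V^{\Pi_n}-\phi$ and use a single mesh step, which makes the comparison between $V^{\Pi_n}$ and $\phi$ exact (no $\rho$ bookkeeping) and lets the sub- and supersolution cases run symmetrically; over one step every NAD strategy degenerates to a control (the observation underlying the proof of Lemma \ref{lemma 3.1}), so the DPP becomes a plain $\sup_u\inf_v$ over conditionally independent controls and the product law $\mu\otimes\nu$ emerges directly. This is legitimate and arguably cleaner, but three points you treat as routine must actually be checked: (i) the one-step DPP has to be combined with Theorem \ref{th3.1} and the argument of Theorem \ref{Remark3.3} to replace $\esssup/\essinf$ of conditional expectations by a plain $\sup/\inf$ of expectations; (ii) the conditional law of $u_r$ is a time-dependent family $(\mu_r)_r$ rather than a single $\mu$, so passing from $\sup_u\inf_v\int_{t_n}^{t_n^+}\widetilde f(t,x,\mu_r,\nu_r)\nabla\phi\,dr$ to $h_n\,\sup_\mu\inf_\nu\widetilde f(t,x,\mu,\nu)\nabla\phi$ requires either the measurable selection you invoke or the paper's device of piecewise-constant controls $\xi_j$ with prescribed laws (which yields $\inf_\nu\sup_\mu$ first and then the minimax identity (\ref{3.45bis})); and (iii) the Taylor and modulus-of-continuity errors must be $o(h_n)$ uniformly, which your bound $|X_{t_n^+}-x_n|\le\norm{f}_\infty h_n$ together with the continuity modulus does provide. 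None of these is a gap in principle, only work that would have to be written out.
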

The definition of a continuous viscosity solution is by now
standard, and the reader interested can find many literatures, e.g., refer to~\cite{CIL}.
\bde\mbox{ } A real-valued
continuous function $W\in C([0,T]\times {\mathbb{R}}^d )$ is called \\
  {\rm(i)} a viscosity subsolution of equation (\ref{3.33}) if $W(T,x) \leq \Phi (x), \mbox{for all}\ x \in
  {\mathbb{R}}^d$, and if for all functions $\varphi \in C^1([0,T]\times
  {\mathbb{R}}^d)$ and $(t,x) \in [0,T) \times {\mathbb{R}}^d$\ such that $W-\varphi $\ attains its
  local maximum at $(t, x)$,
     $$
     \frac{\partial \varphi}{\partial t} (t,x) +\sup_{\mu\in\Delta U}
\inf_{\nu\in\Delta V}\left(\widetilde{f}(t,x,\mu,\nu)\nabla \varphi(t,x)
\right)\geq 0;
     $$
{\rm(ii)} a viscosity supersolution of equation (\ref{3.33}) if $W(T,x)
\geq \Phi (x), \mbox{for all}\ x \in
  {\mathbb{R}}^d$, and if for all functions $\varphi \in C^1([0,T]\times
  {\mathbb{R}}^d)$ and $(t,x) \in [0,T) \times {\mathbb{R}}^d$ such that $W-\varphi $\ attains its
  local minimum at $(t, x)$,
     $$
     \frac{\partial \varphi}{\partial t} (t,x) +  \sup_{\mu\in\Delta U}
\inf_{\nu\in\Delta V}\left(\widetilde{f}(t,x,\mu,\nu)\nabla \varphi(t,x)
\right) \leq 0;
     $$
 {\rm(iii)} a viscosity solution of equation (\ref{3.33}) if it is both a viscosity sub- and a supersolution of equation
     (\ref{3.33}).\ede
The whole section is devoted to the proof of the above theorem. The
proof will be split in a sequel of auxiliary statements. Let us begin with observing that the equi-Lipschitz continuity of
the families of lower and upper value functions along a partition, indexed with the
help of the partitions $\Pi$ of the interval $[0,T]$, stated in
Lemma \ref{lemma 3.4}, is crucial for the application of the
Arzel\`{a}-Ascoli Theorem. Let us arbitrarily fix a sequence of
partitions $(\Pi_n)_{n\ge 1}$ of the interval $[0, T]$, such that for the mesh of the partition $\Pi_n$ it holds:
$|\Pi_n|\rightarrow 0$ as $n\rightarrow +\infty.$ Then we have

\begin{lemma}\label{Arzela-Ascoli} There exists a subsequence of
partitions, again denoted by $(\Pi_n)_{n\ge 1}$, and there are bounded
Lipschitz functions $V,U:[0,T]\times R^d\rightarrow R$
such that $(V^{\Pi_n},U^{\Pi_n})\rightarrow(V,U),$ uniformly on
compacts in $[0,T]\times R^d.$
\end{lemma}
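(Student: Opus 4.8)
The plan is to obtain the statement as a direct application of the Arzel\`{a}-Ascoli Theorem followed by a diagonal extraction, since the genuine analytic content is already contained in Lemmas \ref{lemma 3.2} and \ref{lemma 3.4}. First I would record the two uniform estimates on which everything rests. Lemma \ref{lemma 3.2}(i), together with its symmetric counterpart for $U^\Pi$, yields $|V^{\Pi_n}(t,x)|\le L$ and $|U^{\Pi_n}(t,x)|\le L$ for every $n\ge 1$ and every $(t,x)\in[0,T]\times R^d$; and Lemma \ref{lemma 3.4} provides a Lipschitz constant $L$, \emph{independent of the partition}, bounding the joint modulus of continuity of the pair $(V^{\Pi_n},U^{\Pi_n})$. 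In particular each component is $L$-Lipschitz in $(t,x)$ uniformly in $n$, so the $R^2$-valued family $\{(V^{\Pi_n},U^{\Pi_n})\}_{n\ge 1}$ is uniformly bounded and equicontinuous.

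Next I would handle the non-compactness of the domain by a compact exhaustion. Put $K_m:=[0,T]\times\{x\in R^d:|x|\le m\}$ for $m\ge 1$, so that $K_1\subset K_2\subset\cdots$ and $\bigcup_{m\ge 1}K_m=[0,T]\times R^d$. On each compact $K_m$ the restrictions $\{(V^{\Pi_n},U^{\Pi_n})|_{K_m}\}_{n\ge 1}$ form a uniformly bounded, equicontinuous subset of $C(K_m;R^2)$, hence, by the Arzel\`{a}-Ascoli Theorem, a relatively compact one for the supremum norm $\norm{\cdot}_\infty$.

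The extraction then proceeds by the usual diagonal argument. Applying Arzel\`{a}-Ascoli on $K_1$ gives a subsequence converging uniformly on $K_1$; extracting from it a further subsequence converging uniformly on $K_2$, and iterating, I obtain nested subsequences, the $m$-th converging uniformly on $K_m$. The diagonal subsequence, which I again denote $(\Pi_n)_{n\ge 1}$, converges uniformly on every $K_m$, and since any compact subset of $[0,T]\times R^d$ is contained in some $K_m$, it converges uniformly on compacts to a limit that I call $(V,U)$.

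Finally I would verify that the limits inherit the quantitative properties. Pointwise passage to the limit in $|V^{\Pi_n}(t,x)|\le L$ gives $|V(t,x)|\le L$, and likewise $|U(t,x)|\le L$; and for fixed $(t,x),(t',x')$ letting $n\to\infty$ in the inequality of Lemma \ref{lemma 3.4} yields $|V(t,x)-V(t',x')|+|U(t,x)-U(t',x')|\le L(|t-t'|+|x-x'|)$, so $V$ and $U$ are bounded and Lipschitz with constant $L$, as required. I do not expect any real obstacle here: the only point needing care is the passage from compact sets to the whole unbounded domain, settled by the exhaustion and diagonalization above; all the substantive effort — the partition-independent Lipschitz bound — was already spent in establishing Lemma \ref{lemma 3.4}.
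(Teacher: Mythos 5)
Your proposal is correct and follows essentially the same route as the paper: uniform boundedness and the partition-independent Lipschitz bound of Lemma \ref{lemma 3.4} give equicontinuity, Arzel\`{a}-Ascoli applies on each compact set, and a diagonal extraction over a compact exhaustion yields the locally uniform limit. The paper's proof is just a condensed version of exactly this argument, so there is nothing to add.
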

Later we will see that the function $(V,U)$ defined by this Lemma 4.1
coincide and are independent of the choice of the sequence of
partitions.
\begin{proof} Indeed, from the Arzel\`{a}-Ascoli Theorem we know
that, for any compact subset $K$ of $[0,T]\times R^d$ and for any
subsequence of partitions of $[0,T]$, there exist a subsequence
$(\Pi_n')$ and functions $U',V':K\rightarrow R$ such that
$(V^{\Pi_n'}, U^{\Pi_n'})\rightarrow (V',U')$ uniformly on $K$, as
$n\rightarrow +\infty.$ By combining this result with a standard
diagonalisation argument we can easily prove the stated assertion.
\end{proof}
Let us fix the subsequence $(\Pi_n)_{n\ge 1}$ related with $U,V$ by
Lemma \ref{Arzela-Ascoli}. From Lemma \ref{lemma 3.4} we have
\begin{corollary} For the real constant $L$ introduced in Lemma
\ref{lemma 3.4} we have, for all $(t,x),(t',x')\in [0, T]\times R^d$,
\be\label{3.32}|V(t,x)-V(t',x')|+|U(t,x)-U(t',x')|\leq
L(|t-t'|+|x-x'|).\ee
\end{corollary}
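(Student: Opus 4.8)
The plan is to pass to the limit in the equi-Lipschitz estimate of Lemma~\ref{lemma 3.4}, exploiting the crucial fact that the Lipschitz constant $L$ provided there is \emph{independent} of the partition. Since one and the same $L$ serves simultaneously every member of the sequence $(\Pi_n)_{n\ge 1}$ selected in Lemma~\ref{Arzela-Ascoli}, the bound will survive the passage to the limit, and this is precisely the content of the corollary.

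First I would fix two arbitrary points $(t,x),(t',x')\in[0,T]\times R^d$ and apply Lemma~\ref{lemma 3.4} to each partition $\Pi_n$, obtaining, for every $n\ge 1$,
\be
|V^{\Pi_n}(t,x)-V^{\Pi_n}(t',x')|+|U^{\Pi_n}(t,x)-U^{\Pi_n}(t',x')|\leq L(|t-t'|+|x-x'|).
\ee
The right-hand side does not depend on $n$.

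Next I would invoke the convergence furnished by Lemma~\ref{Arzela-Ascoli}. The points $(t,x)$ and $(t',x')$ lie in a compact subset of $[0,T]\times R^d$, so the uniform-on-compacts convergence $(V^{\Pi_n},U^{\Pi_n})\to(V,U)$ yields, in particular, the pointwise convergences $V^{\Pi_n}(t,x)\to V(t,x)$, $V^{\Pi_n}(t',x')\to V(t',x')$, and the analogous ones for $U$. Letting $n\to\infty$ in the displayed inequality and using the continuity of the absolute value and of addition, the left-hand side converges to $|V(t,x)-V(t',x')|+|U(t,x)-U(t',x')|$, while the right-hand side remains fixed. The inequality is therefore preserved in the limit, which is exactly the desired estimate~(\ref{3.32}).

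There is essentially no obstacle here; the single point worth stressing is that the argument works precisely because $L$ in Lemma~\ref{lemma 3.4} is uniform in $\Pi$. Had the constant been allowed to depend on the partition, the passage to the limit could have destroyed the bound. Since $(t,x)$ and $(t',x')$ were arbitrary, this completes the proof.
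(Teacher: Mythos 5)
Your argument is correct and coincides with the paper's (implicit) justification: the corollary is obtained exactly by passing to the limit in the partition-independent Lipschitz estimate of Lemma \ref{lemma 3.4}, using the pointwise convergence supplied by Lemma \ref{Arzela-Ascoli}. You also correctly identify the only point that matters, namely the uniformity of $L$ in $\Pi$.
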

By taking into account the uniform boundedness of the functions
$V^\Pi,\ U^\Pi,$\ parameterized by $\Pi$-partition of the interval $[0,T]$ (Indeed, they are bounded by the bound of $g$.), this shows, in particular,
that $V,\ U\in C_b([0,T]\times R^d)$ are bounded continuous functions. We are able to prove that $V$ and $U$
are viscosity solutions of equation (\ref{3.33}). For this let us
begin with

\begin{proposition}\label{th3.3} The function $V$ is a viscosity solution
of the Hamilton-Jacobi-Isaacs equation (\ref{3.33}).
\end{proposition}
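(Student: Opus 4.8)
The plan is to verify the viscosity subsolution and supersolution inequalities for $V$ separately; the terminal condition $V(T,\cdot)=g$ is immediate, since $V^{\Pi_n}(T,\cdot)=g$ for every $n$ and $V^{\Pi_n}\to V$ uniformly on compacts (Lemma~\ref{Arzela-Ascoli}). Throughout I would work with the deterministic reformulation $V^{\Pi}(t,x)=\sup_{\alpha}\inf_{\beta}E[g(X_T^{t,x,\alpha,\beta})]$ of Theorem~\ref{Remark3.3} (itself resting on the determinism of Theorem~\ref{th3.1}) together with the one-step dynamic programming principle of Theorem~\ref{th3.2}. The guiding idea is that, over a single cell $[t_i,t_{i+1})$ of a partition, the conditional independence of the two players' admissible controls given ${\cal F}_i$ (Definition~\ref{adm.control}), combined with the delay built into NAD-strategies, turns the local game into a simultaneous one-step game whose payoff is governed by the averaged drift $\widetilde{f}(t,x,\mu,\nu)=\int_V\int_U f(t,x,u,v)\,\mu(du)\nu(dv)$; this is precisely the mechanism that should produce the mixed Hamiltonian in the limit.

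For the subsolution property, fix $\varphi\in C^1$ and a point $(\bar t,\bar x)$, $\bar t<T$, at which $V-\varphi$ attains a local maximum, so that $V\le\varphi$ near $(\bar t,\bar x)$ with equality there. For each $n$ let $t^n$ be the first grid point of $\Pi_n$ lying strictly to the right of $\bar t$, so that $0<t^n-\bar t\le|\Pi_n|\to0$. Applying Theorem~\ref{th3.2} to $V^{\Pi_n}$ over $[\bar t,t^n]$, I would extract an $\varepsilon$-optimal strategy $\alpha^\varepsilon$ for Player 1 with $V^{\Pi_n}(\bar t,\bar x)\le E[V^{\Pi_n}(t^n,X^{\bar t,\bar x,\alpha^\varepsilon,\beta}_{t^n})]+\varepsilon(t^n-\bar t)$ for every $\beta$. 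Replacing $V^{\Pi_n}(t^n,\cdot)$ first by $V(t^n,\cdot)$ (uniform convergence on the compact reached by $X$, which satisfies $|X_{t^n}-\bar x|\le C(t^n-\bar t)$ by Lemma~\ref{estimates-X}) and then by $\varphi$ via the local maximum, a first-order Taylor expansion of $\varphi$ along the dynamics~(\ref{3.1}) gives, after dividing by $t^n-\bar t$ and letting $n\to\infty$, the bound $0\le\partial_t\varphi(\bar t,\bar x)+\liminf_n\frac{1}{t^n-\bar t}E\big[\int_{\bar t}^{t^n}\nabla\varphi\cdot f(r,X_r,u_r,v_r)\,dr\big]+\varepsilon$, valid for every $\beta$. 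Here the cell structure enters: since $\alpha^\varepsilon$ and $\beta$ select on $[\bar t,t^n)$ controls that are respectively ${\cal G}_{i+1}$- and ${\cal H}_{i+1}$-measurable, hence independent of each other and of ${\cal F}_i$, the integrand averages to $\widetilde{f}(\bar t,\bar x,\mu,\nu)\nabla\varphi(\bar t,\bar x)$ in the limit, where $\mu$ (an a~priori ${\cal F}_i$-measurable random law) is the conditional law of Player 1's control and $\nu$ that of Player 2. Choosing $\beta$ to realise an arbitrary deterministic $\nu_0\in\Delta V$ (possible by the representation $\Delta V=\{P_\xi\}$ recalled in Section 2), bounding the random $\mu$ from above by $\sup_{\mu_0\in\Delta U}$, taking $\inf_{\nu_0}$, and invoking the minimax theorem to rewrite $\inf_{\nu_0}\sup_{\mu_0}=\sup_{\mu_0}\inf_{\nu_0}$, yields $\partial_t\varphi(\bar t,\bar x)+\sup_{\mu_0}\inf_{\nu_0}\widetilde{f}(\bar t,\bar x,\mu_0,\nu_0)\nabla\varphi(\bar t,\bar x)\ge-\varepsilon$; letting $\varepsilon\to0$ gives the subsolution inequality.

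The supersolution property is dual. Now $V-\varphi$ has a local minimum at $(\bar t,\bar x)$, so $V\ge\varphi$ locally. For each $\alpha$ I would use the infimum part of the dynamic programming principle to select an $\varepsilon$-optimal response $\beta^\alpha$ for Player 2, and I would realise an arbitrary deterministic $\mu_0\in\Delta U$ as the law of Player 1's control through a suitable $\alpha$. The same Taylor expansion, now estimating from below and using that the random response law $\nu$ gives $E[\widetilde f(\bar t,\bar x,\mu_0,\nu)]\nabla\varphi\ge\inf_{\nu_0}\widetilde f(\bar t,\bar x,\mu_0,\nu_0)\nabla\varphi$, produces $0\ge\partial_t\varphi(\bar t,\bar x)+\inf_{\nu_0}\widetilde{f}(\bar t,\bar x,\mu_0,\nu_0)\nabla\varphi(\bar t,\bar x)-\varepsilon$ for every $\mu_0$; taking $\sup_{\mu_0}$ and sending $\varepsilon\to0$ closes the argument.

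The main obstacle, and the crux of the construction, is the identification in the limit $|\Pi_n|\to0$ of the value of the local one-step game with the mixed Hamiltonian $\sup_{\mu}\inf_{\nu}\widetilde{f}\cdot\nabla\varphi$. This rests on three points that must be handled carefully: (i) the NAD delay makes each player's action over a single cell independent of the opponent's current control, so the local game is genuinely simultaneous; (ii) the conditional independence of the controls given ${\cal F}_i$ turns $E[f(r,x,u_r,v_r)\mid{\cal F}_i]$ into the product integral $\widetilde{f}(r,x,\mu,\nu)$; and (iii) the conditional laws $\mu,\nu$ so obtained are a~priori ${\cal F}_i$-measurable random measures, the reduction to deterministic $(\mu_0,\nu_0)\in\Delta U\times\Delta V$ being exactly what forces the use of the minimax theorem. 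A secondary, purely technical difficulty is the coordination of the two limits, namely letting $t^n-\bar t\to0$ through the mesh of $\Pi_n$ while simultaneously exploiting the uniform convergence $V^{\Pi_n}\to V$; the equi-Lipschitz estimate of Lemma~\ref{lemma 3.4} and the sublinear bound $|X_{t^n}-\bar x|\le C(t^n-\bar t)$ of Lemma~\ref{estimates-X} keep both error terms controlled.
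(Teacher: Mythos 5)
Your overall strategy coincides with the paper's: prove the sub- and supersolution inequalities separately via the dynamic programming principle along $\Pi_n$, use the conditional independence of ${\cal G}_j$, ${\cal H}_j$ and ${\cal F}_{j-1}$ to turn the conditional expectation of $f(t,x,u_r,v_r)\nabla\varphi(t,x)$ into the product integral $\widetilde{f}(t,x,\mu,\nu)\nabla\varphi(t,x)$, restrict one player to controls realizing an arbitrary deterministic law, majorize the remaining (a priori ${\cal F}_i$-measurable, random) law by a supremum over $\Delta U$ (resp.\ minorize by an infimum over $\Delta V$), and close with the minimax theorem. The supersolution half, which you phrase dually rather than by contradiction, is also essentially the paper's argument.

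There is, however, one genuine gap, located exactly at the point you set aside as a ``secondary, purely technical difficulty.'' You take the horizon $t^n$ to be the \emph{first} grid point of $\Pi_n$ to the right of $\bar t$, so that $0<t^n-\bar t\le|\Pi_n|$, and at the end you divide by $t^n-\bar t$ and let $n\to\infty$. But replacing $V^{\Pi_n}(t^n,\cdot)$ by $V(t^n,\cdot)$ (and $V^{\Pi_n}(\bar t,\bar x)$ by $\varphi(\bar t,\bar x)$) costs an error $\delta_n:=\sup_K|V^{\Pi_n}-V|$, and the Arzel\`a--Ascoli convergence provides no rate for $\delta_n$ relative to the mesh; after division, $\delta_n/(t^n-\bar t)$ may diverge, and the limit inequality does not follow. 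The paper resolves this by decoupling the horizon from the mesh: it fixes $\rho=\varepsilon^2$, takes $n$ large enough that the approximation error is at most $\rho$, and then chooses a grid point $t_l^n$ with $\varepsilon\le t_l^n-\bar t\le 2\varepsilon$, so that the error divided by the length of the horizon is of order $\varepsilon$. The price of this fix is that $[\bar t,t_l^n]$ then spans \emph{many} cells of $\Pi_n$, so the identification of the averaged drift cannot be done on a single cell as in your sketch: it must be carried out cell by cell and summed over $j=i+1,\dots,l$, with Player 2's control chosen of the form $\sum_{j}\xi_jI_{[t\vee t^n_{j-1},t\vee t^n_j)}$, $\xi_j\in L^0(\Omega,{\cal H}_j,P;V)$, exactly as in (\ref{3.39})--(\ref{3.43bis}). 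With that modification (and the symmetric one in the supersolution half) your argument closes and reproduces the paper's proof.
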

In order to prove this statement, we show in a first step that
\begin{lemma}\label{lemma1-viscosity V}
The function $V$ is a viscosity subsolution of the
Hamilton-Jacobi-Isaacs equation (\ref{3.33}).
\end{lemma}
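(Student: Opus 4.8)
The plan is to argue by contradiction, driving everything through the dynamic programming principle for the approximating functions $V^{\Pi_n}$ (Theorem \ref{th3.2}, together with their deterministic character from Theorem \ref{th3.1}), and to use in an essential way that the Hamiltonian is a genuine min-max, so that the commutation $\sup_{\mu}\inf_{\nu}=\inf_{\nu}\sup_{\mu}$ noted after (\ref{Hmixte}) hands the second player a single good relaxed response. So let $\varphi\in C^1([0,T]\times R^d)$ and $(t_0,x_0)\in[0,T)\times R^d$ be such that $V-\varphi$ attains a local maximum at $(t_0,x_0)$; subtracting a constant I may assume $V(t_0,x_0)=\varphi(t_0,x_0)$ and $V\le\varphi$ on a neighbourhood $\mathcal N$ of $(t_0,x_0)$. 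Suppose, contrary to the claim, that
\[ \partial_t\varphi(t_0,x_0)+\sup_{\mu\in\Delta U}\inf_{\nu\in\Delta V}\widetilde f(t_0,x_0,\mu,\nu)\nabla\varphi(t_0,x_0)=:-2\theta<0. \]
By the min-max theorem the inner expression equals $\inf_{\nu}\sup_{\mu}$, so there is a fixed $\nu^*\in\Delta V$ with $\partial_t\varphi(t_0,x_0)+\sup_{\mu\in\Delta U}\widetilde f(t_0,x_0,\mu,\nu^*)\nabla\varphi(t_0,x_0)\le-\theta$.

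Next I would realise $\nu^*$ as a constant NAD-strategy $\beta^*$ for Player 2: on each subinterval $[t_{j-1},t_j)$ let $\beta^*$ return the control whose value is an $\mathcal H_j$-measurable random variable of law $\nu^*$, which is possible by the identification $\Delta V=\{P_\xi:\xi\in L^0(\Omega,\mathcal H_j,P;V)\}$ recorded in Section 2, and independently of Player 1's control. Since $\beta^*$ ignores $u$, it lies in $\mathcal B^{\Pi_n}_{t_0,t_l}$ trivially. Fix a small $h>0$ and, for each $n$, pick the partition point $t_l=t_l(n)\in\Pi_n$ just above $t_0+h$, so $t_l\to t_0+h$. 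The DPP (\ref{3.10}), together with the bound $\inf_{\beta}\le(\,\cdot\,)|_{\beta=\beta^*}$, gives
\[ V^{\Pi_n}(t_0,x_0)\le\sup_{\alpha}E\big[V^{\Pi_n}(t_l,X_{t_l}^{t_0,x_0,\alpha,\beta^*})\big]. \]
From (\ref{3.1}) and the bound $M$ on $f$ one has $|X_s^{t_0,x_0,\alpha,\beta^*}-x_0|\le M(s-t_0)\le Mh$, so for $h$ small and $n$ large $(t_l,X_{t_l})\in\mathcal N$; combined with the uniform convergence $V^{\Pi_n}\to V$ on compacts, $V^{\Pi_n}(t_l,\cdot)\le\varphi(t_l,\cdot)+\rho_n$ there, with $\rho_n\to0$.

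A first-order Taylor expansion of $\varphi$ along the flow yields
\[ \varphi(t_l,X_{t_l})-\varphi(t_0,x_0)=\int_{t_0}^{t_l}\big(\partial_t\varphi(s,X_s)+f(s,X_s,u_s,v_s)\nabla\varphi(s,X_s)\big)\,ds, \]
and freezing the slowly varying data at $(t_0,x_0)$ costs only $O(h^2)$ since $f,\partial_t\varphi,\nabla\varphi$ are continuous and $|X_s-x_0|\le Mh$. Taking expectations I would then invoke the conditional independence built into Definition \ref{adm.control}: on $[t_{j-1},t_j)$ the value of $u$ is $\mathcal G_j$-measurable on each $\mathcal F_{j-1}$-atom while that of $v$ is $\mathcal H_j$-measurable of law $\nu^*$, and $\mathcal G_j,\mathcal H_j,\mathcal F_{j-1}$ are independent. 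Conditioning on everything but $\zeta_{j,2}$ averages $f(t_0,x_0,u_s,\cdot)$ against $\nu^*$, so that pointwise $E[f(t_0,x_0,u_s,v_s)\nabla\varphi(t_0,x_0)]=E[\widetilde f(t_0,x_0,\delta_{u_s},\nu^*)\nabla\varphi(t_0,x_0)]\le-\partial_t\varphi(t_0,x_0)-\theta$, the last step applying the defining property of $\nu^*$ to $\mu=\delta_{u_s}$. Hence $E[\varphi(t_l,X_{t_l})]\le\varphi(t_0,x_0)-\theta h+o(h)$ uniformly in $\alpha$, so $V^{\Pi_n}(t_0,x_0)\le\varphi(t_0,x_0)-\theta h+o(h)+\rho_n$. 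Letting $n\to\infty$ (using $V^{\Pi_n}(t_0,x_0)\to V(t_0,x_0)=\varphi(t_0,x_0)$ and $t_l\to t_0+h$) gives $0\le-\theta h+o(h)$, which is absurd for $h$ small. This contradiction establishes the subsolution inequality.

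The delicate point is the drift computation: one must show that, after averaging against the randomisation, Player 1's arbitrary \emph{strong} control contributes to the expected drift exactly as a relaxed control read off against the fixed $\nu^*$, even though $u_s$ and $v_s$ are correlated through the state $X_s$. This is precisely where the conditional independence of the two players' controls on each $[t_{j-1},t_j)$, together with the freezing of the state over the short horizon, must be handled with care, and where the uniformity of the remainder in $\alpha$ has to be checked. Coordinating the two limits — first $n\to\infty$ with $t_l(n)\to t_0+h$, then $h\to0$ — also relies on the partition-independent Lipschitz estimate of Lemma \ref{lemma 3.4} to keep every error term uniform.
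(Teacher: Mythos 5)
Your argument is correct, but it is organized quite differently from the paper's. The paper proves the subsolution inequality \emph{directly}: starting from the DPP it first extracts, by the countable $(\Omega,{\cal F}_i)$-partition pasting argument of (\ref{3.12}), a single near-optimal strategy $\alpha^\rho$ realizing the essential supremum up to $\rho$ (step (\ref{3.38bis})), then tests the resulting essential infimum against the constant controls $v=\sum_j\xi_jI_{[t\vee t^n_{j-1},t\vee t^n_j)}$ with $\xi_j$ being ${\cal H}_j$-measurable, performs the independence computation (\ref{3.41}) to replace $f(t,x,u_r,\xi_j)$ by $\widetilde f(t,x,P_{u^k_r},P_{\xi_j})$, and only at the very end infimizes over the laws $P_{\xi_j}\in\Delta V$ to produce $\inf_\nu\sup_\mu$, invoking the minimax identity (\ref{3.45bis}) last. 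You instead argue by contradiction, invoke the minimax identity \emph{first} to extract a single minimizing $\nu^*$, realize it as a constant NAD strategy $\beta^*$ (legitimate, by the same identification ${\cal V}^\Pi\subset{\cal B}^\Pi$ and $\Delta V=\{P_\xi:\xi\in L^0(\Omega,{\cal H}_j,P;V)\}$ the paper uses), and then obtain a drift bound that is \emph{uniform over all} $\alpha$, via the same conditional-independence computation (conditioning on $\sigma({\cal F}_{j-1}\cup{\cal G}_j)$ averages $v_s$ against $\nu^*$ and the bound $\widetilde f(t,x,\delta_{u_s},\nu^*)\nabla\varphi\le\sup_{\mu\in\Delta U}\widetilde f(t,x,\mu,\nu^*)\nabla\varphi$ holds pointwise). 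What your route buys is that the uniformity in $\alpha$ makes the selection of a near-optimal $\alpha^\rho$ unnecessary: since each conditional expectation is bounded by the same deterministic constant, so is their essential supremum, and you never have to interchange $\esssup$ with $E[\,\cdot\,]$. What it costs is that the argument is tied to the contradiction hypothesis, whereas the paper's direct estimate (\ref{3.43}) is a quantitative inequality reused verbatim in the characterization of the limit.

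Two small points you should tighten. First, the freezing of $(s,X_s)$ at $(t_0,x_0)$ costs $(t_l-t_0)\,m(C(t_l-t_0))$ with $m$ a modulus of continuity (as in the paper's (\ref{3.36})), i.e.\ $o(h)$ but not $O(h^2)$ in general, since $\varphi$ is only $C^1$ and $f$ only continuous in $t$; your final step only needs $o(h)$, so nothing breaks, but the claim as written is too strong. Second, the passage from the DPP, which is stated with $\esssup$, $\essinf$ and ${\cal F}_i$-conditional expectations, to your scalar inequality should be made explicit: either run the drift computation conditionally on ${\cal F}_i$ (it goes through unchanged because the bound obtained in each conditional expectation is deterministic and independent of $\alpha$) and use that $V^{\Pi_n}(t_0,x_0)$ is deterministic by Theorem \ref{th3.1}, or appeal to the unconditional formula of Theorem \ref{Remark3.3}. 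Neither is a gap, but the written proof should say which one it is doing.
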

\begin{proof} Since we know that, by definition (\ref{3.2}) of $V^\Pi$,
$V^{\Pi}(T,x)=g(x),\, x\in R^d,$ for all partition $\Pi$, we also
have $V(T,x)=g(x),\, x\in R^d.$ Let $(t,x)\in [0,T)\times R^d$ and
$\varphi\in {C}^{1}([0,T]\times \mathbb{R}^{d})$\ be an arbitrary
test function such that $\varphi(t,x)-V(t,x)=0\leq
\varphi(s,y)-V(s,y),\, (s,y)\in[0,T]\times R^d.$ Since $V\in
C_b([0,T];R^d)$ is bounded, we can assume without loss of generality
that $\varphi\in {C}^{1}_b([0,T]\times \mathbb{R}^{d})$, i.e., that
$\varphi$ itself as well as its first order derivatives are bounded.
Recall that verifying that $V$ is a viscosity subsolution is
equivalent with showing that
\be\label{3.44}\displaystyle\frac{\partial}{\partial
t}\varphi(t,x)+\sup_{\mu\in \Delta U}\inf_{\nu\in \Delta
V}\widetilde{f}(t,x,\mu,\nu)\nabla\varphi(t,x)\geq0.\ee

For proving the above relation we note that for any $\rho>0$ and
$M>0$ we can find a positive integer $n_{\rho,M}$ such that, for all
$n\geq n_{\rho,M},$
$$|\varphi(t,x)-V^{\Pi_{n}}(t,x)| \leq \rho,\mbox{ and }
V^{\Pi_{n}}(s,y)\leq \varphi(s,y)+\rho,\mbox{ for all
}s\in[0,T],\ |y|\le M.$$

\smallskip

\noindent Indeed, recall that $V^{\Pi_n}\rightarrow V$ converges
uniformly on compacts, $V(t,x)=\varphi(t,x)$ and $V\le\varphi$ on
$[0,T]\times R^d.$

Let $n\ge n_{\rho,M},$\ $\Pi_{n}=\{0=t_{0}^{n}<\cdots<t_{N}^{n}=T\}$,
and let $i=i_n$\ be such that $t_{i}^{n}\leq t<t_{i+1}^{n}\leq
t_{l}^{n}.$  Then, from the DPP (Theorem
\ref{th3.2}) with respect to the partition $\Pi_n$ and since
$V^{\Pi_n}$\ is bounded by some constant $C$, uniformly with respect to $n\ge
1$, we have

\be\label{3.34}\begin{array}{lll}
\varphi(t,x)-\rho&\leq& V^{\Pi_{n}}(t,x)\\
&=&\esssup_{\alpha\in\mathcal
{A}_{t,t_{l}^{n}}^{\Pi_{n}}}\essinf_{\beta\in\mathcal
{B}_{t,t_{l}^{n}}^{\Pi_{n}}}E[V^{\Pi_{n}}(t_{l}^{n},X_{t_{l}^{n}
}^{t,x,\alpha,\beta})|\mathcal{F}_{i}]\\
&\leq &\esssup_{\alpha\in\mathcal{A}_{t,t_{l}^{n}}^{\Pi_{n}}}
\essinf_{\beta\in\mathcal{B}_{t,t_{l}^{n}}^{\Pi_{n}}}
E[\varphi(t_{l}^{n},X_{t_{l}^{n}}^{t,x,\alpha,\beta})|\mathcal
{F}_{i}]\\
& & \hskip 3cm + C P\{\, |X_{t_{l}^{n}}^{t,x,\alpha,\beta}|>M|\mathcal{F}_{i}\}+\rho,\ P\mbox{-a.s.}\\
\end{array}\ee

\noindent However,
$$\displaystyle P\{\, |X_{t_{l}^{n}}^{t,x,\alpha,\beta}|>M|\mathcal{F}_{i}\}\leq
\frac{1}{M}E[|X_{t_{l}^{n}}^{t,x,\alpha,\beta}|\, |\mathcal
{F}_{i}]\leq \frac{1}{M}(|x|+TC_f),$$

\noindent where we have used that
$|X_{t_{l}^{n}}^{t,x,\alpha,\beta}|\le |x|+TC_f$, with $C_f$
denoting the bound of $f$. Thus, by choosing $M=M_\rho$ large
enough, such that $\frac{C}{M}(|x|+TC_f)\le \rho$, and recalling the
equation for the dynamics of $X_.^{t,x,\alpha,\beta}$, we have for
$n\ge n_\rho \, (:=n_{\rho,M_\rho})$
\be\label{3.35} \begin{array}{lll}& & \displaystyle-3\rho \leq  \esssup_{\alpha\in\mathcal
{A}_{t,t_{l}^{n}}^{\Pi_{n}}} \essinf_{\beta\in\mathcal
{B}_{t,t_{l}^{n}}^{\Pi_{n}}}\hskip -0.1cm E\hskip -0.1cm\left[
\varphi (t_{l}^{n},X_{t_{l}^{n}}^{t,x,\alpha,\beta})-\varphi(t,x)|
\mathcal{F}_{i}\right]\\
&=& \displaystyle \mbox{ess}\sup_{\alpha\in\mathcal
{A}_{t,t_{l}^{n}}^{ \Pi_{n}}} \mbox{ess}\inf_{\beta\in\mathcal
{B}_{t,t_{l}^{n}}^{\Pi_{n}}}\hskip -0.15cm E\hskip -0.1cm\left[
\hskip -0.1cm\int_{t}^{t_{l}^{n}}\hskip -2mm\left(\hskip
-0.1cm\frac{\partial}{\partial r} \varphi(r,
X_{r}^{t,x,\alpha,\beta})\hskip -1mm+ \hskip -1mm f(r,X_{r}^{t,x,
\alpha,\beta}\hskip -0.1cm, (\alpha,
\beta)_{r})\nabla\varphi(r,X_{r}^{t,x,\alpha,
\beta})\hskip -0.1cm\right)\hskip -0.1cm dr|\mathcal{F}_{i}\right]\hskip -0.5mm.\\
\end{array}\ee
\noindent Here we have denoted by $(\alpha,\beta)_r$ the unique
couple of control processes $(u,v)\in{\cal U}_{t,t_{l}^{n}}^{
\Pi_{n}}\times {\cal V}_{t,t_{l}^{n}}^{\Pi_{n}}$ at time $r$,
associated with $(\alpha,\beta)\in {\cal A}_{t,t_{l}^{n}}^{\Pi_{n}}
\times {\cal B}_{t,t_{l}^{n}}^{\Pi_{n}}$ by Lemma \ref{lemma 3.1}.
Let us introduce the continuity modulus
$$ m(\delta):=\sup_{|r-t|+|y-x|\le \delta,u\in U,v\in
V}\left|\left((\frac{\partial}{\partial
r}\varphi)(r,y)+f(r,y,u,v)\nabla
\varphi(r,y)\right)-\left((\frac{\partial}{\partial
r}\varphi(t,x)+f(t,x,u,v)\nabla\varphi(t,x)\right)\right|,$$

\smallskip

\noindent $\delta>0.$ Recalling that the function $f(.,.,u,v)$ is
bounded and uniformly continuous, uniformly with respect to
$(u,v)\in U\times V$, and that the first order derivatives of
$\varphi$ are bounded continuous functions, we see that
$m:R_+\rightarrow R_+$ is an increasing function with
$m(\delta)\rightarrow 0$, as $\delta \downarrow 0.$ Thus, taking
into account that $|X_{r}^{t,x, \alpha,\beta}-x|\le C_f|r-t|\le
C_f|t_l^n-t|,\ r\in[t,t_l^n],$ we obtain
\be\label{3.36}\begin{array}{rcl} &
&\displaystyle\left|(\frac{\partial}{\partial r}\varphi)(r,
X_{r}^{t,x,u,v})+f(r,X_{r}^{t,x,u,v},u_{r},v_{r})\nabla\varphi(r,
X_{r}^{t,x,u,v}))-(\frac{\partial}{\partial t}\varphi(t,x)+f(t,x,
u_{r},v_{r})\nabla\varphi(t,x))\right|\\
& &\leq m(C|t_{l}^{n}-t|),\quad r\in[t,t_l^n].
\end{array}\ee
\noindent (The constant $C$ depends on $x$, fixed in this proof.)
Consequently, thanks to (\ref{3.35}),

\be\label{3.37}\begin{array}{rcl} & &\displaystyle
-3\rho-(t_{l}^{n}-t)\left(\frac{\partial}{\partial t}\varphi(t,x)+
m(C|t_{l}^{n}-t|)\right)\\
& &\leq\displaystyle \esssup_{\alpha\in\mathcal
{A}_{t,t_{l}^{n}}^{\Pi_{n}}} \essinf_{\beta\in\mathcal{B}_{t,
t_{l}^{n}}^{\Pi_{n}}}E\left[\int_{t}^{t_{l}^{n}}f(t,x,(\alpha,
\beta)_{r})\nabla\varphi(t,x)dr|\mathcal {F}_{i}\right],\,
P\mbox{-a.s.}
\end{array}\ee

\noindent Similarly to the argument of (\ref{3.12}) in Step 1 of the proof of Lemma
\ref{lemma 1a} we can show there
is an NAD strategy $\alpha^\rho\in\mathcal{A}_{t,
t_{l}^{n}}^{\Pi_{n}}$ such that
\be\label{3.38bis}\begin{array}{rcl} & &\displaystyle
-4\rho-(t_{l}^{n}-t)\left(\frac{\partial}{\partial t}\varphi(t,x)+
m(C|t_{l}^{n}-t|)\right)\\
& &\leq\displaystyle \essinf_{\beta\in\mathcal{B}_{t,
t_{l}^{n}}^{\Pi_{n}}}E\left[\int_{t}^{t_{l}^{n}}f(t,x,(\alpha^\rho,
\beta)_{r})\nabla\varphi(t,x)dr|\mathcal {F}_{i}\right].
\end{array}\ee

\noindent Thus, since $\mathcal {V}^{\Pi_n}_{t,t_{l}^{n}}\subset
\mathcal {B}^{\Pi_n}_{t,t_{l}^{n}}$ (Indeed, the controls
$v\in\mathcal {V}^{\Pi_n}_{t,t_{l}^{n}}$ are identified with
$\beta^v\in \mathcal {B}^{\Pi_n}_{t,t_{l}^{n}}$, where
$\beta^v(u):=v,\, u\in \mathcal {U}^{\Pi_n}_{t,t_{l}^{n}}$.), we
obtain from (\ref{3.38bis}) that, for all $v\in\mathcal
{V}^{\Pi_n}_{t,t_{l}^{n}},$

\be\label{3.38}\displaystyle
-4\rho-(t_{l}^{n}-t)\left(\frac{\partial}{\partial t}\varphi(t,x)+
m(C|t_{l}^{n}-t|)\right) \leq\displaystyle
E\left[\int_{t}^{t_{l}^{n}}f(t,x,(\alpha^\rho,
v)_{r})\nabla\varphi(t,x)dr|\mathcal {F}_{i}\right].\ee

\noindent Let $v\in\mathcal {V}^{\Pi_n}_{t,t_{l}^n}$ be now of the
special form $v:=\sum_{j=i+1}^{l}\xi_{j}I_{[t\vee t_{j-1}^{n},t\vee
t_{j}^{n})},\ \ \xi_{j}\in L^0(\Omega,\mathcal {H}_{j},P;V).$
\noindent Then, \be\label{3.39}\begin{array}{rcl} \displaystyle
E\left[\int_t^{t_l^n}f(t,x,(\alpha^\rho,v)_r)\nabla\varphi(t,x){\rm
d}r|\mathcal{F}_i\right] =\sum\limits_{j=i+1}^lE\left[ \int_{t\vee
t_{j-1}^n}^{t\vee
t_j^n}f(t,x,\alpha^\rho(v)_r,\xi_j)\nabla\varphi(t,x){\rm
d}r|\mathcal{F}_i\right].\end{array}\ee

\noindent Let us put $u^\rho:=\alpha^\rho(v)\in\mathcal
{U}^{\Pi_n}_{t,t_{l}^{n}},$ and let $i+1\le j\le l.$ Then, due to
the definition of the controls from $\mathcal {U}^{\Pi_n}_{t,
t_{l}^{n}}$, there exist an partition $(\Gamma_k)_{k\ge 1}\subset
{\cal F}_{j-1}$ of $\Omega$ and a sequence $(u^{k})_{k\ge 1} \subset
L^0_{{\cal G}_j}(t\vee t_{j-1}^{n},t\vee t_{j}^{n};U)$ such that,
for the restriction of $u^\rho$ to $[t\vee t_{j-1}^{n},t\vee
t_{j}^{n})$,

\centerline{$\displaystyle u^\rho_{|[t\vee t_{j-1}^{n},t\vee
t_{j}^{n})}= \sum_{k\ge 1}I_{\Gamma_k}u^{k}.$}

\noindent Consequently, recalling that $\xi_j\in L^0(\Omega,\mathcal
{H}_{j},P;V)$ and that the three $\sigma$-fields ${\mathcal
{G}_{j}},{\mathcal {H}_{j}}$ and ${\mathcal {F}_{j-1}}$ are mutually
independent, we have

\be\label{3.41}\begin{array}{lll} & & \displaystyle E\left[\int_{
t\vee t_{j-1}^{n}}^{t\vee
t_j^{n}}f(t,x,\alpha^\rho(v)_r,\xi_j)\nabla\varphi(t,x){\rm
d}r|\mathcal{F}_i\right]\\
&=&\displaystyle E\left[\sum_{k\geq1}I_{\Gamma_k}\int_{t\vee
t_{j-1}^n}^{t \vee t_j^n}
E\left[f(t,x,u_r^{k},\xi_j)\nabla\varphi(t,x)|
\mathcal{F}_{j-1}\right]{\rm d}r |\mathcal{F}_i\right]\\
&=&\displaystyle E\left [\sum_{k\geq1}I_{\Gamma_k}\int_{t\vee
t_{j-1}^{n}}^{t\vee t_j^{n}}\left(\int_{U\times
V}f(t,x,u,v)\nabla\varphi(t,x)P_{u_r^{k}}({\rm d}u)\otimes
P_{\xi_j}({\rm d}v)\right){\rm d}r|
\mathcal{F}_i\right]\\
&\leq&\displaystyle (t\vee t_j^{n}-t\vee
t_{j-1}^{n})\cdot\sup_{\mu\in\triangle U}\left(\int_{U\times
V}f(t,x,u,v)\nabla \varphi(t,x)\mu({\rm d}u)\otimes P_{\xi_j}({\rm
d}v)\right).
\end{array}
\ee

\noindent Recall that $\widetilde{f}(t,x,\mu,\nu):=\displaystyle
\int_{U\times V}f(t,x,u,v)\mu({\rm d}u)\nu({\rm d}v).$ Hence, from
(\ref{3.38}), (\ref{3.39}) and (\ref{3.41}),

\be\label{3.43bis}\begin{array}{lll} & &\displaystyle
-4\rho-(t_{l}^{n}-t)\left(\frac{\partial}{\partial t}\varphi(t,x)+
m(C|t_{l}^{n}-t|)\right)\\
& \leq &\displaystyle E\left[\int_{t}^{t_{l}^{n}}f(t,x,
(\alpha^\rho, v)_{r})\nabla\varphi(t,x)dr| \mathcal {F}_{i}\right]\\
&\leq &\displaystyle \sum_{j=i+1}^l(t\vee t_j^n-t\vee
t_{j-1}^{n})\cdot \sup_{\mu\in\Delta U}\widetilde{f}(t,x,
\mu,P_{\xi_j})\nabla\varphi(t,x),
\end{array}\ee

\noindent and from the arbitrariness of the random variables
$\xi_{j}\in L^0(\Omega,\mathcal {H}_{j},P;V),\, i+1\le j\le l$ and
the fact that $\Delta V=\{P_\xi\, |\, \xi\in L^0(\Omega,\mathcal
{H}_{j},P;V)$, we conclude

\smallskip

\be\label{3.43}\begin{array}{lll} & &\displaystyle
-4\rho-(t_{l}^{n}-t)\left(\frac{\partial}{\partial t}\varphi(t,x)+
m(C|t_{l}^{n}-t|)\right)\\
&\leq &\displaystyle \sum_{j=i+1}^l(t\vee t_j^{n}-t\vee
t_{j-1}^{n})\cdot \inf_{\nu\in\Delta V}\sup_{\mu\in
\Delta U}\widetilde{f}(t,x, \mu,\nu)\nabla\varphi(t,x)\\
&= &\displaystyle (t_l^{n}-t)\cdot \inf_{\nu\in\Delta V}\sup_{\mu\in
\Delta U} \widetilde{f}(t,x,\mu,\nu)\nabla\varphi(t,x).
\end{array}\ee

\smallskip

\noindent We choose now $\varepsilon>0$ arbitrarily small and we put
 $\rho=\varepsilon^2.$ For $n\ge n_\rho$\ large enough we can
find some $l\, (i+1\le l\le n)$, such that $\varepsilon \leq
t_l^{(n)}-t\leq 2\varepsilon.$ Indeed, recall that the mesh of
$\Pi_n$ converges to zero, as $n\rightarrow+\infty.$ Then it follows
from (\ref{3.43}) that

\be\label{3.44bis}\begin{array}{lll} & &\displaystyle
-4(t_l^{n}-t)^2-(t_l^{n}-t)\left(\frac{\partial}{\partial
t}\varphi(t,x)+
m(C|t_{l}^{n}-t|)\right)\\
&\leq &\displaystyle (t_l^{n}-t)\cdot \inf_{\nu\in\Delta
V}\sup_{\mu\in \Delta U}
\widetilde{f}(t,x,\mu,\nu)\nabla\varphi(t,x).
\end{array}\ee

\noindent Consequently, dividing both sides of this latter relation
by $t_l^{n}-t $ and taking the limit as $\varepsilon\rightarrow 0$,
we obtain

\be\label{3.44ter}\displaystyle\frac{\partial}{\partial
t}\varphi(t,x)+\inf_{\nu\in \Delta  V}\sup_{\mu\in \Delta
 U}\widetilde{f}(t,x,\mu,\nu)\nabla\varphi(t,x)\geq0.\ee

\smallskip

\noindent In order to conclude, we remark that, for all $(t,x,p)\in
[0,T]\times R^d\times R^d,$ the function
$H(t,x,\mu,\nu,p):=\widetilde{f}(t,x,\mu,\nu)p$ is bilinear in
$(\mu,\nu)\in \Delta U\times \Delta V$. The spaces $\Delta U$ and
$\Delta V$ are compact and convex. Consequently,
\be\label{3.45bis}\displaystyle \inf_{\nu\in\Delta V}\sup_{\mu\in
\Delta U} \widetilde{f}(t,x,\mu,\nu)p=\sup_{\mu\in \Delta
U}\inf_{\nu\in\Delta V} \widetilde{f} (t,x,\mu,\nu)p,\ (t,x,p)\in
[0,T]\times R^d\times R^d,\ee

\noindent and relation (\ref{3.44}) follows from (\ref{3.44ter}).
The proof is complete.
\end{proof}
In order to complete the proof of Proposition \ref{th3.3} we also
have to prove the following
\begin{lemma}\label{lemma2-viscosity V}
The function $V$ is the viscosity supersolution of the
Hamilton-Jacobi-Isaacs equation (\ref{3.33}).
\end{lemma}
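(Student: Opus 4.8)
The plan is to mirror the proof of Lemma \ref{lemma1-viscosity V}, interchanging the roles of the two players, and to take advantage of the fact that for a supersolution the required one-sided estimate is the \emph{easier} direction, so the proof is shorter. Fix $(t,x)\in[0,T)\times R^d$ and a test function $\varphi\in C^1([0,T]\times R^d)$ such that $V-\varphi$ attains a local minimum at $(t,x)$ with $\varphi(t,x)=V(t,x)$; as before I would reduce, without loss of generality, to $\varphi\in C^1_b$ with $\varphi\le V$ on all of $[0,T]\times R^d$. The terminal condition is immediate, since $V^\Pi(T,\cdot)=g$ for every $\Pi$ gives $V(T,x)=g(x)\ (\ge g(x))$. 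It then remains to prove
\[
\frac{\partial}{\partial t}\varphi(t,x)+\sup_{\mu\in\Delta U}\inf_{\nu\in\Delta V}\widetilde{f}(t,x,\mu,\nu)\nabla\varphi(t,x)\le 0.
\]

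First I would run the dynamic programming step of (\ref{3.34})--(\ref{3.35}) with all inequalities reversed: since $\varphi\le V$ and $V^{\Pi_n}\to V$ uniformly on compacts, for $\rho>0$ and $M=M_\rho$ large enough (Markov's inequality controlling $P\{|X^{t,x,\alpha,\beta}_{t_l^n}|>M\mid\mathcal{F}_i\}$ exactly as in the subsolution proof) and all $n\ge n_\rho$, Theorem \ref{th3.2} yields the upper bound $\esssup_\alpha\essinf_\beta E[\varphi(t_l^n,X_{t_l^n}^{t,x,\alpha,\beta})-\varphi(t,x)\mid\mathcal{F}_i]\le 3\rho$. Writing this increment as a time integral and freezing the coefficients at $(t,x)$ by means of the same continuity modulus $m$ (using $|X_r^{t,x,\alpha,\beta}-x|\le C_f|t_l^n-t|$), I obtain
\[
(t_l^n-t)\frac{\partial}{\partial t}\varphi(t,x)+\esssup_{\alpha}\essinf_{\beta}E\Big[\int_t^{t_l^n}f(t,x,(\alpha,\beta)_r)\nabla\varphi(t,x)\,dr\,\Big|\,\mathcal{F}_i\Big]\le 3\rho+(t_l^n-t)m(C|t_l^n-t|).
\]

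The key point, where the argument genuinely differs from Lemma \ref{lemma1-viscosity V}, is the \emph{lower} estimate of the $\esssup_\alpha\essinf_\beta$ term. Whereas in the subsolution proof one had to extract a near-optimal $\alpha^\rho$ from a countable family before letting Player 2 range over pure controls, here it suffices to exhibit a single good pure control for Player 1: using the embedding $\mathcal{U}^{\Pi_n}_{t,t_l^n}\subset\mathcal{A}^{\Pi_n}_{t,t_l^n}$ (identify $u$ with $\alpha^u(v):=u$), I pick $\mu^\ast\in\Delta U$ that is $\rho$-optimal for $\sup_\mu\inf_\nu\widetilde{f}(t,x,\mu,\nu)\nabla\varphi(t,x)$, choose $\xi_j\in L^0(\Omega,\mathcal{G}_j,P;U)$ with $P_{\xi_j}=\mu^\ast$, and set $u:=\sum_{j=i+1}^l\xi_j I_{[t\vee t_{j-1}^n,\,t\vee t_j^n)}$. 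For an arbitrary $\beta$ the associated controls are $(u,v)$ with $v=\beta(u)=\sum_k I_{\Gamma_k}v^{k}$ on each subinterval; since on interval $j$ the three $\sigma$-fields $\mathcal{G}_j,\mathcal{H}_j,\mathcal{F}_{j-1}$ are mutually independent, conditioning on $\mathcal{F}_{j-1}$ turns the integrand into $\widetilde{f}(t,x,P_{\xi_j},P_{v^k_r})\nabla\varphi(t,x)\ge\inf_\nu\widetilde{f}(t,x,\mu^\ast,\nu)\nabla\varphi(t,x)$, exactly as in (\ref{3.41}) but with the roles of $\mu$ and $\nu$ interchanged. Summing over $j$ gives, for \emph{every} $\beta$,
\[
E\Big[\int_t^{t_l^n}f(t,x,(\alpha^u,\beta)_r)\nabla\varphi(t,x)\,dr\,\Big|\,\mathcal{F}_i\Big]\ge (t_l^n-t)\Big(\sup_{\mu\in\Delta U}\inf_{\nu\in\Delta V}\widetilde{f}(t,x,\mu,\nu)\nabla\varphi(t,x)-\rho\Big),
\]
and hence the same lower bound holds for $\esssup_\alpha\essinf_\beta$.

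Combining the last two displays, I divide by $t_l^n-t$ and then choose $\rho=\varepsilon^2$ together with an index $l$ for which $\varepsilon\le t_l^n-t\le 2\varepsilon$ (possible since $|\Pi_n|\to0$); the right-hand side is then bounded by $3\varepsilon+m(2C\varepsilon)+\varepsilon^2$, which tends to $0$ as $\varepsilon\to0$, giving the desired supersolution inequality. The main obstacle, as in Lemma \ref{lemma1-viscosity V}, is purely bookkeeping: one must keep the conditional-independence structure aligned with the order of $\sup$/$\inf$ so that freezing Player 1's law at $\mu^\ast$ produces precisely $\inf_\nu\widetilde{f}(t,x,\mu^\ast,\nu)$ and, after optimizing $\mu^\ast$, the value $\sup_\mu\inf_\nu$. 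I expect no minmax interchange of the type (\ref{3.45bis}) to be needed here, since the supersolution inequality is already stated in the form $\sup_\mu\inf_\nu$.
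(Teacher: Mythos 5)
Your proof is correct and follows essentially the same route as the paper's: the dynamic programming principle along $\Pi_n$ with the inequalities reversed, freezing of the coefficients at $(t,x)$ via the continuity modulus $m$, the embedding of the pure control $u=\sum_j\xi_jI_{[t\vee t_{j-1}^n,t\vee t_j^n)}$ with $P_{\xi_j}=\mu^*$ into $\mathcal{A}^{\Pi_n}_{t,t_l^n}$, and the conditional-independence computation of (\ref{3.52}). The only (harmless) deviations are that you argue directly with a $\rho$-optimal $\mu^*$ rather than by contradiction as in (\ref{3.46}), and that you bound the $\essinf_\beta$ from below uniformly over all $\beta$ instead of first extracting a near-minimizing $\beta^\rho$ as in (\ref{3.37c}) --- a slight streamlining which is legitimate, since the independence estimate indeed holds for every $\beta$.
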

\begin{proof} In the proof of Lemma \ref{lemma1-viscosity V} we have
already noticed that $V(T,x)=g(x),\, x\in R^d.$ Let us fix again
$(t,x)\in [0,T)\times R^d$ and consider a test function $\varphi\in
C^1_b([0,T]\times\mathbb{ R}^d)$ which is bounded together with its
first order derivatives, such that $ V(t,x)-\varphi(t,x)=0\leq
V-\varphi$ on $[0,T]\times R^d$. In order to prove the statement we
have to show that \be\label{3.45}\displaystyle
\frac{\partial}{\partial t}\varphi(t,x)+\sup_{\mu\in \Delta
U}\inf_{\nu\in \Delta
 V}\widetilde{f}(t,x,\mu,\nu)\nabla\varphi(t,x)\leq0.\ee

\noindent Let us suppose that this latter relation doesn't hold
true. Then, there exist $\delta>0,$\ and $\mu^*\in\Delta U$\ such
that

\be\label{3.46}\begin{array}{lll} 0<\delta&<&\displaystyle
\frac{\partial}{\partial t}\varphi(t,x)+\sup_{\mu\in \Delta
U}\inf_{\nu\in \Delta
 V}\widetilde{f}(t, x,\mu,\nu)\nabla\varphi(t,x)\\
&=&\displaystyle
 \frac{\partial}{\partial
t}\varphi(t,x)+\inf_{\nu\in \Delta
 V}\widetilde{f}(t, x,\mu^*,\nu)\nabla\varphi(t,x) \\
&\leq& \displaystyle \frac{\partial}{\partial
t}\varphi(t,x)+\widetilde{f}(t, x,\mu^*,\nu)\nabla\varphi(t,x),
\end{array}
\ee

\noindent for all $\nu\in\Delta  V.$ On the other hand, given an
arbitrarily small $\rho>0$\ and $M\ge C\rho^{-1}(|x|+C_fT)$, there
exists $n_{\rho}\ge 1,$ such that for all $n\geq n_{\rho}$,
$$|\varphi(t,x)-V^{\Pi_n}(t,x)|\leq\rho,\quad
V^{\Pi_n}(s,y)\geq \varphi(s,y)-\rho,\, s\in[0,T],\, |y|\le M.$$
\noindent Let $n\geq n_{\rho}$. Adapting the argument of the
proof of the preceding Lemma 4.2 and using the notations introduced
there, we first deduce from the DPP (Theorem \ref{th3.2}) with respect to the partition $\Pi_n$ that
\be\label{3.34a}\begin{array}{lll}
\varphi(t,x)+\rho&\geq& V^{\Pi_{n}}(t,x)\\
&=&\esssup_{\alpha\in\mathcal
{A}_{t,t_{l}^{n}}^{\Pi_{n}}}\essinf_{\beta\in\mathcal
{B}_{t,t_{l}^{n}}^{\Pi_{n}}}E[V^{\Pi_{n}}(t_{l}^{n},X_{t_{l}^{n}
}^{t,x,\alpha,\beta})|\mathcal{F}_{i}]\\
&\geq &\esssup_{\alpha\in\mathcal{A}_{t,t_{l}^{n}}^{\Pi_{n}}}
\essinf_{\beta\in\mathcal{B}_{t,t_{l}^{n}}^{\Pi_{n}}}
E[\varphi(t_{l}^{n},X_{t_{l}^{n}}^{t,x,\alpha,\beta})|\mathcal
{F}_{i}]\\
& & \hskip 3cm -C P\{\, |X_{t_{l}^{n}}^{t,x,\alpha,\beta}
|>M|\mathcal{F}_{i}\}-\rho\\
&\geq &\esssup_{\alpha\in\mathcal{A}_{t,t_{l}^{n}}^{\Pi_{n}}}
\essinf_{\beta\in\mathcal{B}_{t,t_{l}^{n}}^{\Pi_{n}}}
E[\varphi(t_{l}^{n},X_{t_{l}^{n}}^{t,x,\alpha,\beta})|\mathcal
{F}_{i}]-2\rho,\ P\mbox{-a.s.}
\end{array}\ee

\noindent Consequently,
\be\label{3.35a}\hskip -0.4cm\begin{array}{rl} \displaystyle  3\rho\geq
 \displaystyle \esssup_{\alpha\in\mathcal {A}_{t,t_{l}^{n}}^{\Pi_{n}}} \essinf_{\beta\in\mathcal {B}_{t,t_{l}^{n}}^{\Pi_{n}}}
E\left[ \int_{t}^{t_{l}^{n}}\hskip
-2mm\left(\frac{\partial}{\partial r}\varphi(r,
X_{r}^{t,x,\alpha,\beta})\hskip -1mm+ \hskip -1mm f(r,X_{r}^{t,x,
\alpha,\beta},(\alpha, \beta)_{r})\nabla\varphi(r,X_{r}^{t,x,
\alpha, \beta})\right)dr|\mathcal{F}_{i}\right]\hskip -0.5mm,\\
\end{array}\ee

\noindent and using the continuity modulus $m(.)$ introduced in the
proof of Lemma \ref{lemma1-viscosity V} we obtain
\be\label{3.37a}\begin{array}{rcl} & &\displaystyle
3\rho-(t_{l}^{n}-t)\left(\frac{\partial}{\partial t}\varphi(t,x)-
m(C|t_{l}^{n}-t|)\right)\\
& &\geq\displaystyle \esssup_{\alpha\in\mathcal
{A}_{t,t_{l}^{n}}^{\Pi_{n}}} \essinf_{\beta\in\mathcal{B}_{t,
t_{l}^{n}}^{\Pi_{n}}}E\left[\int_{t}^{t_{l}^{n}}f(t,x,(\alpha,
\beta)_{r})\nabla\varphi(t,x)dr|\mathcal {F}_{i}\right],\,
P\mbox{-a.s.}
\end{array}\ee

\noindent In the next step, observing that we can identify
$\mathcal{U}_{t,t_l^{n}}^{\Pi_n}$ as a subset of
$\mathcal{A}_{t,t_l^{n}}^{\Pi_n}$, and choosing
$u\in\mathcal{U}_{t,t_l^{n}}^{\Pi_n}$ of the form
$u=\sum_{j=i+1}^l\xi_jI_{[t\vee t_{j-1}^{n},t\vee t_j^{n})},$ with
$\xi_j\in {L}^0(\Omega,{\mathcal{G}_j},P; U)$ such that
$P_{\xi_j}=\mu^*\ (i+1\le j\le l)$, we get
\be\label{3.37b}\begin{array}{rcl} & &\displaystyle
3\rho-(t_{l}^{n}-t)\left(\frac{\partial}{\partial t}\varphi(t,x)-
m(C|t_{l}^{n}-t|)\right)\\
&\geq &\displaystyle\essinf_{\beta\in\mathcal{B}_{t,
t_{l}^{n}}^{\Pi_{n}}}E\left[\int_{t}^{t_{l}^{n}}f(t,x,(u,
\beta(u)_r))\nabla\varphi(t,x)dr|\mathcal {F}_{i}\right],\,
P\mbox{-a.s.}
\end{array}\ee

\noindent In analogy to the argument of (\ref{3.12}) in Step 1 of the proof of Lemma
\ref{lemma 1a} we now can construct some
$\beta^\rho\in\mathcal{B}_{t,t_l^n}^\Pi$ (depending on the control
process $u$) such that

\be\label{3.37c}\begin{array}{rcl} & &\displaystyle
4\rho-(t_{l}^{n}-t)\left(\frac{\partial}{\partial t}\varphi(t,x)-
m(C|t_{l}^{n}-t|)\right)\\
&\geq&\displaystyle E\left[\int_{t}^{t_{l}^{n}}f(t,x,(u,
\beta^\rho(u)_r))\nabla\varphi(t,x)dr|\mathcal {F}_{i}\right]\\
&\geq&\displaystyle E\left[\sum_{j=i+1}^l\int_{t\vee
t_{i-1}^{n}}^{t\vee t_{j}^{n}}f(t,x,(\xi_j,
\beta^\rho(u)_r))\nabla\varphi(t,x)dr|\mathcal {F}_{i}\right].\\
\end{array}\ee

\noindent We put now $v:=\beta^\rho(u)$, and we observe that, for
any $i+1\le j\le n$,  the restriction of $v$ to the interval $[t\vee
t_{j-1}^{n}, t_j^{n})$ belongs to $\mathcal{V}_{t\vee
t_{j-1}^{n},t_j^{n}}^{\Pi_n}$. Consequently, by definition,
$v|_{[t\vee t_{j-1}^{n},t\vee t_j^{n})}$ is of the form $v|_{[t\vee
t_{j-1}^n,t\vee
t_j^n)}=\displaystyle\sum_{k\geq1}I_{\Gamma_k}v^{k},$ where
$(\Gamma_k)_{k\geq1}\subset \mathcal{F}_{j-1}$ is a partition of
$\Omega$ and $v_{k}\in {L}^0_{\mathcal{H}_j}(t_{j-1}^{n}, t_j^{n};
V),\, k\ge 1$ (For simplicity of notations we have suppressed in this
representation the dependence on $j$). Thus, the independence of the
three $\sigma$-fields $\mathcal{H}_j,\ \mathcal{G}_j$ and
$\mathcal{F}_{j-1}$yields

\be\label{3.52}\begin{array}{lll} &&\displaystyle E\left[\int_{t\vee
t_{j-1}^{n}}^{t\vee t_j^{n}}f(t,x,\xi_j, \beta^\rho(u)_r)\nabla
\varphi(t,x){\rm d}r|\mathcal{F}_i^{n}\right]\\
&=&\displaystyle E\left[\sum_{k\geq 1}I_{\Gamma_k}\int_{t\vee
t_{j-1}^{n}}^{t\vee t_j^{n}} E\left[f(t,x,\xi_j,v^{k}_r)
\nabla\varphi(t,x)|\mathcal{F}_{j-1}\right]
{\rm d}r|\mathcal{F}_i\right] \\
&=&\displaystyle E\left[\sum_{k\geq 1}I_{\Gamma_k}\int_{t\vee
t_{j-1}^n }^{t\vee t_j^{n}}\widetilde{f}(t,x, \mu^*,
P_{v^{k}_r})\nabla\varphi(t,x){\rm d}r|\mathcal{F}_i\right]\\
&\geq& \displaystyle (t\vee t_j^{n}-t\vee t_{j-1}^{n})\cdot
E\left[\sum_{k\geq1}I_{\Gamma_k}\inf_{\nu\in \Delta
V}(\widetilde{f}(t,x,\mu^*,\nu)\nabla\varphi(t,x))|
\mathcal{F}_i\right]\\
&=&\displaystyle (t\vee t_j^{n}-t\vee t_{j-1}^{n})\cdot\inf_{\nu \in
\Delta  V}\widetilde{f}(x,\mu^*,\nu)\nabla\varphi(t,x)).
\end{array}
\ee

\noindent Therefore, summing up (\ref{3.52}) with respect to $j$ and
substituting the result in (\ref{3.37c}) we obtain

\be\label{3.53}\begin{array}{lll} & &\displaystyle
4\rho+(t_{l}^{n}-t)m(C|t_{l}^{n}-t|)\\
&\geq &\displaystyle (t_l^n-t)\cdot\left(\frac{\partial}{\partial
t}\varphi(t,x)+\inf_{\nu\in \Delta
 V}\widetilde{f}(x,\mu^*,\nu)\nabla\varphi(t,x)\right)\\
&\geq &\delta(t_l^n-t).
\end{array}
\ee

\noindent Let now $\varepsilon>0,\ \rho=\varepsilon^2$ and
$|\Pi_n|>0$\ be small enough, such that $t_l^n$\ can be chosen such
that $\frac{\varepsilon}{2}\leq t_l^n-t\leq \varepsilon$. Then, from
(\ref{3.53}) we have \be\label{3.54}4\varepsilon^2+\varepsilon
m(C\varepsilon)\geq \frac{\varepsilon}{2}\delta.\ee

\noindent Thus, first dividing this latter relation by $\varepsilon$
and after letting $\varepsilon\rightarrow 0$, we get $\delta\leq 0$,
which contradicts $\delta>0$ in (\ref{3.46}). Therefore, our
hypothesis is wrong and we have (\ref{3.45}). The proof is complete.
\end{proof}

In analogy to Proposition \ref{th3.3} we can prove the following
\begin{proposition}
Also the function $U\in C_b([0,T]\times R^d)$ is a viscosity
solution of the Hamilton-Jacobi-Isaacs equation (\ref{3.33}).
\end{proposition}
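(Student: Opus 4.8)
The plan is to establish the statement exactly as Proposition~\ref{th3.3} was established for $V$: I would prove separately that $U$ is a viscosity subsolution and a viscosity supersolution of (\ref{3.33}), the two arguments being mirror images of Lemmas~\ref{lemma1-viscosity V} and~\ref{lemma2-viscosity V} with the roles of the two players, and correspondingly of $\sup$ and $\inf$, interchanged. The preliminary reductions are already in hand: from $U^{\Pi}(T,x)=g(x)$ for every partition and the uniform convergence $U^{\Pi_n}\to U$ on compacts (Lemma~\ref{Arzela-Ascoli}) we get $U(T,x)=g(x)$, which takes care of both terminal inequalities; by the Corollary following Lemma~\ref{lemma 3.4} the function $U$ is bounded and Lipschitz, hence $U\in C_b([0,T]\times R^d)$; and as for $V$ one may assume the test function $\varphi\in C^1_b$. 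The single point that lets the \emph{same} Hamiltonian appear is the minmax identity (\ref{3.45bis}), $\sup_{\mu\in\Delta U}\inf_{\nu\in\Delta V}\widetilde f(t,x,\mu,\nu)\nabla\varphi(t,x)=\inf_{\nu\in\Delta V}\sup_{\mu\in\Delta U}\widetilde f(t,x,\mu,\nu)\nabla\varphi(t,x)$, so that the equation satisfied by $U$ is literally (\ref{3.33}).

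For the \emph{subsolution} property I would mirror the supersolution proof of Lemma~\ref{lemma2-viscosity V}. At a local maximum point $(t,x)$ of $U-\varphi$, suppose the subsolution inequality fails, and set $\delta:=-[\partial_t\varphi(t,x)+\sup_{\mu}\inf_{\nu}\widetilde f(t,x,\mu,\nu)\nabla\varphi(t,x)]>0$. By the minmax identity (\ref{3.45bis}) this produces a single measure $\nu^*\in\Delta V$ with $\partial_t\varphi(t,x)+\widetilde f(t,x,\mu,\nu^*)\nabla\varphi(t,x)<-\delta/2$ for all $\mu\in\Delta U$. Since in $U^{\Pi_n}(t,x)=\essinf_{\beta}\esssup_{\alpha}E[U^{\Pi_n}(t_l^n,X_{t_l^n}^{t,x,\alpha,\beta})\mid\mathcal F_i]$ the minimizing Player~2 occupies the outer position, and since $\mathcal V^{\Pi_n}_{t,t_l^n}\subset\mathcal B^{\Pi_n}_{t,t_l^n}$, Player~2 may simply play the pure control $v=\sum_{j}\xi_j I_{[t\vee t_{j-1}^n,\,t\vee t_j^n)}$ with $P_{\xi_j}=\nu^*$. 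Running the tail estimate and the first-order Taylor expansion of the $V$-proof in the DPP (Theorem~\ref{th3.2}), as in (\ref{3.34a}), together with the conditional independence of $\mathcal G_j,\mathcal H_j,\mathcal F_{j-1}$ that turns each integrand into $\widetilde f(t,x,\mu,\nu^*)\nabla\varphi$ bounded above by $\sup_\mu\widetilde f(t,x,\mu,\nu^*)\nabla\varphi$ as in (\ref{3.52}), forces an inequality of the form $\tfrac{\varepsilon}{2}\,\delta\le C\varepsilon^2+\varepsilon\,m(C\varepsilon)$; dividing by $\varepsilon$ and letting $\varepsilon\to 0$ gives $\delta\le 0$, a contradiction.

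For the \emph{supersolution} property I would mirror the subsolution proof of Lemma~\ref{lemma1-viscosity V}. At a local minimum point $(t,x)$ of $U-\varphi$ the DPP gives $\varphi(t,x)+\rho\ge U^{\Pi_n}(t,x)=\essinf_{\beta}\esssup_{\alpha}E[U^{\Pi_n}(t_l^n,X_{t_l^n}^{t,x,\alpha,\beta})\mid\mathcal F_i]$; bounding $U^{\Pi_n}$ below by $\varphi-\rho$ on $\{|y|\le M\}$ and controlling the tail as in (\ref{3.34}) yields, after the same expansion, an estimate of the form $3\rho-(t_l^n-t)(\partial_t\varphi-m)\ge \essinf_{\beta}\esssup_{\alpha}E[\int_t^{t_l^n}f(t,x,(\alpha,\beta)_r)\nabla\varphi\,dr\mid\mathcal F_i]$. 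Extracting a near-optimal $\beta^\rho$ from the outer essential infimum as in (\ref{3.12}), and then restricting the inner Player~1 to pure controls $u=\sum_j\xi_j I_{[t\vee t_{j-1}^n,\,t\vee t_j^n)}$ with $P_{\xi_j}=\mu$ arbitrary (legitimate since $\mathcal U^{\Pi_n}_{t,t_l^n}\subset\mathcal A^{\Pi_n}_{t,t_l^n}$), the independence computation of (\ref{3.41}) produces the $\sup_{\mu\in\Delta U}\inf_{\nu\in\Delta V}$ Hamiltonian directly; choosing $\rho=\varepsilon^2$ and $l$ so that $\tfrac{\varepsilon}{2}\le t_l^n-t\le\varepsilon$ and letting $\varepsilon\to 0$ gives $\partial_t\varphi(t,x)+\sup_{\mu}\inf_{\nu}\widetilde f(t,x,\mu,\nu)\nabla\varphi(t,x)\le 0$.

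The only genuine subtlety, and the step I would check most carefully, is the bookkeeping of which player occupies the outer optimization: because $U$ reverses the order of $\essinf_{\beta}$ and $\esssup_{\alpha}$ relative to $V$, the player restricted to a pure (relaxed) control must be chosen consistently, namely Player~2 in the subsolution argument and Player~1 in the supersolution argument, so that the one-sided bound one obtains from the independence computation is exactly the one that (\ref{3.45bis}) converts into the symmetric Hamiltonian of (\ref{3.33}). Everything else, including the continuity-modulus estimate and the passage to the limit, is identical to the proof for $V$.
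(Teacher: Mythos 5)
Your proposal is correct and is exactly the argument the paper intends: the paper proves this proposition only by the remark ``in analogy to Proposition \ref{th3.3}'', and your expansion carries out that analogy faithfully, with the right assignment of which player is restricted to pure controls (the one whose optimization produces the inner $\sup_\mu$ or $\inf_\nu$ in the resulting bound) and with the minmax identity (\ref{3.45bis}) ensuring both limits satisfy the same Hamiltonian. No gaps.
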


Finally, we are able to prove Theorem \ref{main result}.
\begin{proof}
Due to relation (\ref{3.45bis}) we know that the bounded continuous
functions $V$ and $U$ are viscosity solutions of the same
Hamilton-Jacobi-Isaacs equation. On the other hand, since the
Hamiltonian of this equation
$$\displaystyle H(t,x,p)=\inf_{\nu\in \Delta
 V}\sup_{\mu\in \Delta
 U}(\widetilde{f}(t,x,\mu,\nu)p),\, (t,x,p)\in  [0,T]\times
R^d\times R^d,$$

\noindent is bounded and continuous, Lipschitz in $z$, uniformly
with respect to $(t,x)\in[0,T]\in R^d$, and

$$|H(t,x,p)-H(t,x',p)|\le C|p||x-x'|,\ \ x,x'\in R^d,
(t,p)\in[0,T]\times R^d,$$

\noindent it is by now well-known, that the viscosity solution of
the Hamilton-Jacobi-Isaacs equation (\ref{3.33}) is unique in the
class of continuous functions with at most polynomial growth.
Consequently, $V=U$. On the other hand, recall that we have got $V$
and $U$ as limit over a converging subsequence of the sequence
$V^{\Pi_n}$ and $U^{\Pi_n}$, respectively, where $(\Pi_n)_{n\ge 1}$
is an arbitrarily chosen sequence of partitions of $[0,T]$ such that
$|\Pi_n|\rightarrow 0\ (n\rightarrow +\infty)$. Therefore, since the
limit of the converging subsequence doesn't depend on the choice of
the sequence, it follows that $V^\Pi$ and $U^\Pi$ converge along all
sequence of partitions $\Pi$ with $|\Pi|\rightarrow 0$, and the
limit is $V=U.$ The proof of Theorem \ref{main result} is complete.
\end{proof}

\section*{Acknowledgments}  The work of Rainer Buckdahn and Marc Quincampoix have been partially supported by the Commission of the European Communities under the 7-th Framework Programme Marie Curie Initial Training Networks
Project ``Deterministic and Stochastic Controlled Systems and
Applications" FP7-PEOPLE-2007-1-1-ITN, no. 213841-2 and project
SADCO, FP7-PEOPLE-2010-ITN, No 264735. This was also supported
partially by the French National Research Agency ANR-10-BLAN 0112.

 The work of Juan LI has been supported by the NSF of P.R.China (No. 11071144, 11171187), Shandong Province (No. BS2011SF010), SRF for ROCS (SEM), 111 Project (No. B12023).

\end{document}